\theoremstyle{plain} 
\newtheorem{theorem}{\indent\sc Theorem}[section]
\newtheorem{lemma}[theorem]{\indent\sc Lemma}
\newtheorem{corollary}[theorem]{\indent\sc Corollary}
\newtheorem{proposition}[theorem]{\indent\sc Proposition}
\newtheorem{fact}[theorem]{\indent\sc Fact}  
\theoremstyle{definition} 
\newtheorem{definition}[theorem]{\indent\sc Definition}
\newtheorem{remark}[theorem]{\indent\sc Remark}
\def\C{{\mathbb{C}}}
\def\RC{{\widehat{{\mathbb{C}}}}}
\def\D{{\mathbb{D}}}
\def\F{{\mathscr{F}}}
\def\G{{\mathscr{G}}}
\def\H{{\mathbb{H}}}
\def\L{{\mathbb{L}}}
\def\Lc{{\mathcal{L}}}
\def\M{{\mathscr{M}}}
\def\P{{\mathscr{P}}}
\def\Pi{{\mathbb{P}}}
\def\R{{\mathbb{R}}}
\def\Z{{\mathbb{Z}}}
\let\Re\relax
\DeclareMathOperator{\Re}{Re}
\let\Im\relax
\DeclareMathOperator{\Im}{Im} 
\begin{document}

\title[Bloch--Ros principle]{Bloch--Ros principle \\ and its application to surface theory} 

\author[S.~Kasao]{Shunsuke Kasao} 

\author[Y.~Kawakami]{Yu Kawakami} 

\dedicatory{Dedicated to Professor Toshihiro Nakanishi on his sixty-fifth birthday}


\renewcommand{\thefootnote}{\fnsymbol{footnote}}
\footnote[0]{2020\textit{ Mathematics Subject Classification}.
Primary 53A10; Secondary 30D35, 30D45.}
\keywords{
Gauss map, normal family, curvature estimate, Bloch--Ros principle}
\thanks{
This work was supported by JSPS KAKENHI Grant Number JP23K03086. 
}

\address{
Shibaura institute of technology Kashiwa, \endgraf
junior and senior high school, \endgraf
Masuo, Kashiwa, 277-0033, \endgraf
Japan
}
\email{kasao15039@gmail.com}

\address{
Faculty of Mathematics and Physics, \endgraf
Kanazawa University \endgraf
Kanazawa, 920-1192, \endgraf
Japan
}
\email{y-kwkami@se.kanazawa-u.ac.jp}


\maketitle

\begin{abstract}
There exists the duality between normal family theory and value distribution theory of meromorphic functions, which is called the Bloch principle. Zalcman formulated a more precise statement on it. In this paper, based on the Zalcman and Ros works, we comprehend the phenomenon of the trinity among normal family theory, value distribution theory and minimal surface theory and give a systematic description to the relationship among the Montel theorem, the Liuoville theorem and the Bernstein theorem as well as the Carath\'{e}odory--Montel theorem, the Picard little theorem and the Fujimoto theorem. We call this phenomenon Bloch--Ros principle. We also generalize the Bloch--Ros principle to various classes of surfaces, for instance, maxfaces in the Lorentz--Minkowski $3$-space, improper affine fronts in the affine $3$-space and flat fronts in the hyperbolic $3$-space. In particular, we give an effective criterion to determine which properties for meromorphic functions that play a role of the Gauss maps of these classes of surfaces satisfy the Gaussian curvature estimate.
\end{abstract}

\tableofcontents

\section{Introduction} 
The Bloch principle is a widely recognized guiding principle in complex analysis. Bloch \cite[p. 84]{Bl1926} pointed out that a family of meromorphic functions which possess a common property $P$ on a domain $\Omega$ 
in the complex plane $\C$ is likely to be normal on $\Omega$ if there does not exist nonconstant meromorphic function with this property $P$ on $\C$. A typical example of the Bloch principle 
is the correspondence between the Picard little theorem and the Carath\'eodory--Montel theorem. Based on this principle, Bloch was able to predict several important results such as 
the Ahlfors five islands theorem and the Cartan theorem on holomorphic curves omitting hyperplanes in Nevanlinna theory
(See \cite{Ko2003, Ne1970, NW2013, Ru2021} for Nevanlinna theory).
However, there exist some counterexamples to this principle (see \cite{RU1986}, \cite[Section 4.2]{Sc} for example). In the meantime, Zalcman \cite{Za1975} obtained an effective lemma to determine the normality for families of meromorphic functions and it indicates when the Bloch principle is valid. Notable references for the Bloch principle are \cite{WB, Sc, LZ}. 

Based on this formulation, Ros gave a similar formulation to minimal surface theory. Before we explain it, we recall the Bernstein theorem and the Fujimoto theorem.   
The Bernstein theorem states that any entire minimal graph in the Euclidean 
$3$-space $\R^3$ must be a plane. As one of the proofs of the Bernstein theorem, Heinz \cite{He1952} proved the Gaussian curvature estimate for minimal graphs in $\R^3$. 
Fujimoto \cite{FH1988} obtained the Gaussian curvature estimate for minimal surfaces in $\R^3$ whose Gauss maps omit $5$ or more points in the unit sphere $S^2$.  
As an application of this estimate, Fujimoto proved that any complete minimal surface in $\R^3$ whose Gauss map omits $5$ or more points in $S^2$ must be a plane. 
These results imply that the Gaussian curvature estimate occurs if the Gauss maps of  minimal surfaces satisfy a certain property $P$. By applying this estimate, 
we can prove that any complete minimal surface in $\R^3$ whose Gauss map has the property $P$ must be a plane. In \cite{RA}, Ros obtained an effective criterion to determine which properties for the Gauss maps of minimal surfaces in $\R^3$ satisfy the Gaussian curvature estimate. From this criterion, we can apply some results in normal family theory to not only value distribution theory of meromorphic functions on $\C$ but also minimal surface theory. In fact, Ros proved the Fujimoto results by using this criterion and it contributed to the similar result in the Euclidean space of higher dimension \cite{LP, OR}. We call this phenomenon the {\it Bloch--Ros principle} in honor of the Bloch principle and the Ros work. We thereby obtain the following trinity (Figure 1).  

\vspace{5mm}
\begin{center}
\begin{tikzpicture}[scale=0.7]
\draw(-10.5,0) rectangle (-2.5,1);
\draw(2.5,0) rectangle (10.5,1);
\draw(-4,-4) rectangle (4,-3);
\node (A) at (-6.5,0.5) {Normal Family};
\node (A2) at (-6.5,-0.5) {Carath\'{e}odory--Montel};
\node (B) at (6.5,0.5) {Meromorphic Function on $\mathbb{C}$};
\node (B2) at (6.5,-0.5) {Picard};
\node (C) at (0,-3.5) {Minimal Surface in $\mathbb{R}^3$};
\node (C2) at (0,-4.5) {Fujimoto};
\draw (-2.4,0.5)--(2.4,0.5)--(0,-2.9)--(-2.4,0.5);
\node (G) at (0,-0.3) {Bloch--Ros};
\node (H) at (0,-1) {Principle};
\node (I) at (0,-5.5) {FIGURE $1$. Bloch--Ros Principle};
\end{tikzpicture}
\end{center}

Value distribution theoretic properties of the Gauss maps are also valid for other classes of surfaces which may admit singularities. For instance, the second author and Nakajo \cite{KN} showed that any weakly complete improper affine front in the affine $3$-space $\R^3$ whose Lagrangian Gauss map omits $4$ or more points in the extended complex plane $\RC := \C \cup \{ \infty \}$ must be an elliptic paraboloid. 
As an application of this result, the parametric affine Bernstein theorem for improper affine spheres in $\R^3$ was proved. Moreover, the second author \cite{Ka2014}  
showed similar results for flat fronts in the hyperbolic $3$-space $\H^3$. In \cite{YK2013}, the second author gave a curvature estimate for the conformal metric 
$ds^2 =(1+|g|^2)^{m} \, |f|^2 \, |dz|^2$ on an open connected Riemann surface $\Sigma$, where $f \, dz$ is a holomorphic $1$-form on $\Sigma$, $g$ is a meromorphic function on $\Sigma$
that omits $m+3$ or more distinct values in $\RC$ and $m$ is a 
positive integer (\cite[Theorem 2.1]{YK2013}). In this paper, we call this triple $(\Sigma, \, f \, dz, \, g)$ 
a {\it Weierstrass $m$-triple} (Definition \ref{def:m-pair} in this paper). 
Moreover, by applying this estimate, the second author proved that the precise maximal number of omitted values of a nonconstant meromorphic function $g$ on $\Sigma$ with the complete conformal metric $ds^2$ is $m+2$ (\cite[Corollary 2.2, Proposition 2.4]{YK2013}). We thereby find that the maximal number of omitted values of the Gauss map of any nonflat complete minimal surface in $\R^3$ is $4 \, (=2+2)$ because $m=2$. Furthermore, the maximal number of omitted values of the Lagrangian Gauss map of any weakly complete improper affine front in $\R^3$ that is not an elliptic paraboloid is $3 \, (=1+2)$ because $m=1$. We also remark that the second author \cite{Ka2015} obtained other function-theoretic properties 
such as unicity theorem and the special case of the Ahlfors islands theorem for Weierstrass $m$-triples.      

The purpose of this paper is to establish a generalization of the Bloch--Ros principle to several classes of surfaces and to give a unified description of the phenomenon 
in normal family theory, value distribution theory and surface theory. The paper is organized as follows: In Section \ref{sec:2}, 
we refine the formulation of the Bloch principle in \cite{RA} from the viewpoint of complex analysis, for example, Proposition \ref{prop:meromorphically extension} and the Zalcman lemma 
(Theorem \ref{thm:Zalcman}). We first review the basic facts of normal family theory in Section \ref{sbsec:2.1}. We next define the closedness and the compactness of property in Definition \ref{def:cpt prop}. 
We state the difference between the normal property used in complex analysis (see \cite{WB,Sc,LZ} for example) and the compact property in Remark \ref{rem:cpt normal}. 
Proposition \ref{lem:cpt and const} and the Zalcman lemma (Theorem \ref{thm:Zalcman}) represent the duality between normal family theory and value distribution theory of meromorphic functions. 
As examples of compact property, we consider the property $P_L$ of being bounded in Proposition \ref{prop:Liouville Montel} and the property $P_{X}$ of omitting $3$ points in Theorem \ref{thm:Picard Montel}. 
In particular, we reprove the compactness of $P_{X}$ if the cardinality $|X|$ of a set $X (\subset \RC)$ is at least $3$ in \cite[Theorem 2]{RA}. We thereby see that the duality between a weaker result of the Montel theorem and the Liouville theorem and 
the duality between the Carath\'eodry--Montel theorem and the Picard little theorem.
Furthermore, we also obtain the contraposition of the Picard great theorem in Remark \ref{rem:Picard great thm}.
In Section \ref{sec:3}, we construct a generalization of the Bloch--Ros principle to Weierstrass $m$-triples. We first define a Weierstrass $m$-triple $(\Sigma, \, f \, dz, \, g)$ in Definition \ref{def:m-pair} and $m$-curvature estimate in Definition \ref{def:m-curvature est}. We next give an effective criterion to determine which compact properties satisfy $m$-curvature estimate in Theorem \ref{thm:m-curvature est}. The proof is given in Section \ref{sbseq:3.2}. As applications of this criterion, we give 
two examples (Theorems \ref{thm:Bernstein} and \ref{thm:Fujimoto}) of compact property that satisfies 
$m$-curvature estimate. By virtue of these estimates, we obtain the Liouville-type theorem (Corollary \ref{cor:Bernstein}) and the Picard-type theorem (Corollary \ref{cor:Fujimoto}) for the meromorphic function $g$ of a Weierstrass $m$-triple $(\Sigma, \, f \, dz, \, g)$ if its metric $ds^2 =(1+|g|^2)^{m} \, |f|^2 \, |dz|^2$ is complete on $\Sigma$. In Section \ref{sec:4}, by the generalization of the Bloch--Ros principle, we deduce value distribution theoretic properties such as uniqueness and the maximal number of omitted values for the Gauss maps of several classes of complete surfaces, that is, minimal surfaces in $\R^3$ (Section \ref{sbseq:4.1}), maxfaces in the Lorentz--Minkowski 3-space $\L^3$ (Section \ref{sbseq:4.2}), improper affine fronts in $\R^3$ (Section \ref{sbseq:4.3}) and flat fronts in $\H^3$ (Section \ref{sbseq:4.4}).  
We therefore find that the compact property $P_L$ corresponds to the uniqueness theorems such as the Bernstein theorem for minimal surfaces in $\R^3$ and the parametric affine Bernstein theorem for improper affine spheres in $\R^3$ and the compact property $P_{X} \, (|X| \geq m + 3)$ also implies the Fujimoto theorem for complete minimal surfaces in $\R^3$ and the result of \cite[Theorem 3.2]{KN} for weakly complete improper 
affine fronts in $\R^3$. In conclusion, we can obtain the following trinity (Figure 2). 
\vspace{5mm}
\begin{center}
\begin{tikzpicture}[scale=0.7]
\draw(-10.5,0) rectangle (-2.5,1);
\draw(2.5,0) rectangle (10.5,1);
\draw(-4,-6.1) rectangle (4,-3);
\node (A) at (-6.5,0.5) {Normal Family};
\node (A1) at (-6.5,-0.5) {Montel};
\node (A2) at (-6.5,-1.3) {Carath\'{e}odory--Montel};
\node (B) at (6.5,0.5) {Meromorphic Function on $\mathbb{C}$};
\node (B1) at (6.5,-0.5) {Liouville};
\node (B2) at (6.5,-1.3) {Picard};
\node (C1) at (0,-3.5) {Minimal Surface in $\mathbb{R}^3$};
\node (C2) at (0,-4.2) {Maxface in $\mathbb{L}^3$};
\node (C3) at (0,-4.9) {Improper Affine Front in $\mathbb{R}^3$};
\node (C4) at (0,-5.6) {Flat Front in $\mathbb{H}^3$};
\node (C5) at (0,-6.6) {Uniqueness (Bernstein, etc.)};
\node (C6) at (0,-7.4) {Maximal number of omitted values};
\node (C7) at (0,-8.1) {(Fujimoto, etc.)};
\draw (-2.4,0.5)--(2.4,0.5)--(0,-2.9)--(-2.4,0.5);
\draw[->] (-1,1)--(1,1);
\node (D) at (0,1.5) {\footnotesize{Proposition \ref{lem:cpt and const}}};
\draw[->] (1,0)--(-1,0);
\node (E) at (0,-0.5) {\footnotesize{Theorem \ref{thm:Zalcman}}};
\draw[->] (-2.1,-1)--(-1.1,-2.4);
\node (G) at (-3,-2.1) {\footnotesize{Theorem \ref{thm:m-curvature est}}};
\draw[->] (2.1,-1)--(1.1,-2.4);
\node (H) at (2.2,-2.1) {\footnotesize{\cite{YK2013}}};
\node (I) at (0,-9.1) {FIGURE $2$. Generalization of Bloch--Ros Principle};
\end{tikzpicture}
\end{center}


\section{Duality between normal family theory and value distribution theory} \label{sec:2} 

In this section, we refine the formulation of the Bloch principle in \cite{RA} from the viewpoint of complex analysis. 

\subsection{Normal family} \label{sbsec:2.1} \par
We summarize the notion of normal families of meromorphic functions on a domain in $\C$. See \cite{CTC,Sc} for details. 

We first define a half of the chordal distance $\chi(\cdot,\cdot)$ based on \cite[Section 1.3]{Fu1993}. The extended complex plane $\RC:=\C \cup \{ \infty \}$ is identified with the unit sphere $S^2(\subset \R^3)$ by the stereographic projection $\pi_N:\RC \longrightarrow S^2$ that maps $\infty \in \RC$ to $N=(0,0,1) \in S^2$.
Here, a distance on $S^2$ is the induced metric as a subset of $\R^3$. Then the distance $\chi(z_1,z_2)$ between $z_1$ and $z_2$ in $\RC$ is defined as a half of the distance on $S^2$. We call $\chi(z_1,z_2)$ a half of the \textit{chordal distance} between $z_1$ and $z_2$ or the \textit{spherical distance} between $z_1$ and $z_2$. For $z_1, \, z_2 \in \C$, we can express as follows:
\[
\chi(z_1, z_2) = \dfrac{|z_1-z_2|}{\sqrt{1+|z_1|^2} \,  \sqrt{1+|z_2|^2}}, \quad \chi(z_1, \infty) = \dfrac{1}{\sqrt{1+|z_1|^2}}. 
\]
By its definition, $(\RC, \chi)$ is a compact metric space. 

\begin{definition} \label{def:conv}
A sequence $\{ f_n \}_{n=1}^\infty$ of functions converges \textit{spherically uniformly} to a function $f$ on a set $E \subset \C$ or $\{ f_n \}_{n=1}^\infty$ converges uniformly to $f$ on $E$ with respect to the spherical distance
if, for any $\varepsilon >0$, there exists a positive integer $N \in \Z_{>0}$ such that $\chi(f_n(z),f(z)) < \varepsilon$ for all $n \geq N$ and for all $z \in E$.
Then we denote it by $f_n \stackrel{\mathrm{sph.}}{\rightrightarrows} f \; \mathrm{on}\; E$. 
Moreover, a sequence $\{ f_n \}_{n=1}^\infty$ of functions converges \textit{spherically uniformly} on compact subsets of a domain $\Omega \subset \C$ to a function $f$ or $\{ f_n \}_{n=1}^\infty$ converges locally uniformly to $f$ on $\Omega$ with respect to the spherical distance
if, for any compact subset $K \subset \Omega$ and for any $\varepsilon >0$, there exists $N \in \Z_{>0}$ such that $\chi(f_n(z),f(z)) < \varepsilon$ for all $n \geq N$ and for all $z \in K$.
We also define it by $f_n \stackrel{\mathrm{sph. \; loc.}}{\rightrightarrows} f \; \mathrm{on}\; \Omega$.   
\end{definition}

We consider the relationship between uniform convergence with respect to the spherical distance and usual uniform convergence. 
By their definitions, we see that for a set $E \subset \C$
\begin{align*}
f_n \rightrightarrows f \; \mathrm{on}\; E \quad \Longrightarrow \quad f_n \stackrel{\mathrm{sph.}}{\rightrightarrows} f \; \mathrm{on}\; E,
\end{align*}
where $f_n \rightrightarrows f \; \mathrm{on}\; E$ means that $\{ f_n \}_{n=1}^\infty$ converges uniformly  to $f$ on $E$. 
On the other hand, the converse does not necessarily hold (e.g. consider $f_n(z)=e^{z+\frac{1}{n}}$ and the limit function $f(z)=e^z$ on $\Omega=\{ z \in \C : \Re z>0 \}$). 
However, the converse holds if the limit function is bounded.

\begin{fact}{\cite[Theorem 1.2.2]{Sc}}\label{fact:conv}
If a sequence $\{ f_n \}_{n=1}^\infty$ converges spherically uniformly to a bounded function $f$ on $E \subset \C$, then $\{ f_n \}_{n=1}^\infty$ converges uniformly to $f$ on $E$.
\end{fact}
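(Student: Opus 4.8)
The plan is to prove Fact \ref{fact:conv}: if $f_n \stackrel{\mathrm{sph.}}{\rightrightarrows} f$ on $E$ and $f$ is bounded, then $f_n \rightrightarrows f$ on $E$ in the ordinary (Euclidean) sense. Since $f$ is bounded, say $|f(z)| \leq M$ for all $z \in E$, the values $f(z)$ stay inside a compact subset $\overline{\mathbb{D}_M}$ of $\C$, away from $\infty$. The key observation is that on such a compact subset of $\C$ the spherical distance $\chi$ and the Euclidean distance are bi-Lipschitz equivalent; it is only near $\infty$ that $\chi$ degenerates. So the strategy is: first use spherical convergence to force the $f_n$ to take values in a slightly larger bounded region for $n$ large, then compare the two metrics there.

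First I would fix $\varepsilon > 0$ and choose $\delta_0 = \chi(\infty, \{|w| \le M+1\}) > 0$, i.e. a positive lower bound for the spherical distance from $\infty$ to the closed disk of radius $M+1$; concretely $\delta_0 = 1/\sqrt{1+(M+1)^2}$. By spherical uniform convergence there is $N_1$ such that $\chi(f_n(z), f(z)) < \delta_0/2$ for all $n \geq N_1$ and all $z \in E$. Combined with $|f(z)| \leq M$, this keeps $f_n(z)$ within spherical distance $\delta_0/2 + \chi(f(z),\infty)^{-1}$-type bounds — more carefully, it forces $|f_n(z)| \leq M+1$ (say) for all $n \geq N_1$ and $z \in E$, because if some $|f_n(z)|$ exceeded $M+1$ then along the spherical geodesic the point would have to be at spherical distance at least something comparable to $\delta_0$ from the disk where $f(z)$ lies. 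I would write out this elementary estimate using the explicit formula for $\chi$. Thus all the points $f_n(z)$ and $f(z)$, for $n \geq N_1$, lie in the fixed compact set $K := \{w \in \C : |w| \leq M+1\}$.

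Next, on $K$ the two metrics are comparable: there is a constant $C = C(M)$ with $|w_1 - w_2| \leq C\,\chi(w_1,w_2)$ for all $w_1, w_2 \in K$. This follows directly from the formula $\chi(w_1,w_2) = |w_1-w_2| / (\sqrt{1+|w_1|^2}\sqrt{1+|w_2|^2})$: on $K$ the denominator is at most $1 + (M+1)^2$, so one may take $C = 1 + (M+1)^2$. Then, given the target $\varepsilon$, apply spherical convergence once more to obtain $N_2 \geq N_1$ with $\chi(f_n(z), f(z)) < \varepsilon/C$ for all $n \geq N_2$ and all $z \in E$; for such $n$ and $z$ both values lie in $K$, hence $|f_n(z) - f(z)| \leq C\,\chi(f_n(z),f(z)) < \varepsilon$. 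This is exactly ordinary uniform convergence on $E$, completing the proof.

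The only slightly delicate point — the "main obstacle," though it is modest — is the first step: deducing a uniform bound $|f_n(z)| \leq M+1$ from spherical closeness to a bounded limit. One has to be careful that spherical closeness to a bounded value genuinely prevents $f_n(z)$ from running off toward $\infty$, and to make the threshold choice ($\delta_0/2$, the radius $M+1$) consistent. This is handled cleanly by the explicit chordal-distance formula: if $|f_n(z)| > M+1$ while $|f(z)| \le M$, a direct computation shows $\chi(f_n(z), f(z))$ is bounded below by a positive constant depending only on $M$, so choosing $N_1$ to beat that constant yields the bound. Everything after that is routine metric comparison on a compact subset of $\C$.
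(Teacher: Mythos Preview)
The paper does not actually supply a proof of this Fact; it is stated with a citation to Schiff \cite[Theorem 1.2.2]{Sc} and used as a black box. Your argument is essentially the standard proof found there: bound the limit, use spherical closeness to trap all $f_n$ (for large $n$) in a common compact disk in $\C$, then exploit that $\chi$ and the Euclidean metric are bi-Lipschitz on such a disk via the explicit formula $\chi(w_1,w_2)=|w_1-w_2|/(\sqrt{1+|w_1|^2}\sqrt{1+|w_2|^2})$.

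One small point worth tightening: your specific choice $\delta_0/2$ with $\delta_0=1/\sqrt{1+(M+1)^2}$ does not quite do the job for all $M$ (for large $M$ the spherical distance between $\{|w|\le M\}$ and $\{|w|\ge M+1\}$ is smaller than $\delta_0/2$). You already anticipate this at the end, noting that a direct computation gives \emph{some} positive lower bound $c(M)$ for $\chi(w_1,w_2)$ when $|w_1|\le M$ and $|w_2|>M+1$; indeed one can take $c(M)=1/\bigl((M+2)\sqrt{1+M^2}\bigr)$ from $|w_1-w_2|\ge |w_2|-M$ and $\sqrt{1+|w_2|^2}\le 1+|w_2|$. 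Using that $c(M)$ in place of $\delta_0/2$ makes Step~1 airtight, and the rest of your argument goes through unchanged.
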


\begin{definition} \label{def:normal}
Let $\F$ be a family of meromorphic functions on a domain $\Omega$ in $\C$.
We say that the family $\F$ is \textit{normal} on $\Omega$ if every sequence $\{ f_n \}_{n=1}^\infty \subset \F$ contains a subsequence $\{ f_{n_k} \}_{k=1}^\infty$ that converges locally uniformly on $\Omega$ with respect to the spherical distance. 
\end{definition}

\begin{remark} \label{rem:normality of mero}
Normality is a notion of compactness:
a family $\F$ of meromorphic functions is normal on a domain $\Omega$ if and only if $\F$ is relatively compact in the topology of spherically uniformly convergence on compact subsets of $\Omega$.
Normality of families of meromorphic functions also has a local property:
a family $\F$ of meromorphic functions is normal on a domain $\Omega$ if and only if, for each point $p \in \Omega$, there exists a neighborhood $U_p$ of $p$ such that $\F$ is normal on $U_p$.   
\end{remark}

\begin{fact}{\cite[Theorem 1.1]{CTC}}\label{fact:conv of mero}
Let $\Omega$ be a domain in $\C$, $a \in \Omega$ and $\{ f_n \}_{n=1}^\infty$ be a sequence of meromorphic functions on $\Omega$.
Assume that the sequence $\{ f_n \}_{n=1}^\infty$ converges locally uniformly to f on $\Omega$ with respect to the spherical distance.
Then the following assertions hold. 

\begin{enumerate}
\item[(\hspace{.18em}i\hspace{.18em})] If $f(a) \ne \infty$, then there exist $r>0$ and $N \in \Z_{>0}$ such that $f_n\;(n \geq N)$ and $f$ are bounded holomorphic functions on $ D(a;r) := \{ z \in \Omega : |z-a| <r \}$ and $f_n \rightrightarrows f \; \mathrm{on} \; D(a;r)$.

\item[(\hspace{.08em}ii\hspace{.08em})] If $f(a)=\infty$, then there exist $r>0$ and $N \in \Z_{>0}$ such that $1/f_n \; (n \geq N)$ and $1/f$ are bounded holomorphic functions on $D(a;r) (\subset \Omega)$ and $1/f_n \rightrightarrows 1/f \; \mathrm{on} \; D(a;r)$.
\end{enumerate}
\end{fact}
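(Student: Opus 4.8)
The plan is to prove assertion (i) directly and then to obtain (ii) from (i) by applying it to the reciprocals $1/f_n$, exploiting the fact that the inversion $w\mapsto 1/w$ is an isometry of the metric space $(\RC,\chi)$. One preliminary remark should be recorded first: being the spherical local uniform limit of the continuous maps $f_n\colon\Omega\to(\RC,\chi)$, the limit $f$ is itself a continuous map $\Omega\to(\RC,\chi)$, so the hypotheses $f(a)\neq\infty$ and $f(a)=\infty$ make sense and have local consequences for $f$ near $a$.

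For (i), suppose $f(a)\neq\infty$. Since the closed chordal ball $\{\,w\in\RC:\chi(w,f(a))\le\tfrac12\chi(f(a),\infty)\,\}$ is a compact subset of $\C$, continuity of $f$ at $a$ produces an $r>0$ with $\{\,|z-a|\le r\,\}\subset\Omega$ and a constant $M>0$ such that $|f|\le M$ on $D(a;r)$. As $f$ is now bounded on $D(a;r)$, Fact \ref{fact:conv} upgrades the spherical uniform convergence $f_n\stackrel{\mathrm{sph.}}{\rightrightarrows}f$ on $D(a;r)$ to ordinary uniform convergence $f_n\rightrightarrows f$ on $D(a;r)$; hence there is $N\in\Z_{>0}$ with $|f_n|\le M+1$ on $D(a;r)$ for all $n\ge N$, so each such $f_n$ has no pole on $D(a;r)$ and is a bounded holomorphic function there. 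Finally $f$, as a locally uniform limit of the holomorphic functions $f_n$ $(n\ge N)$, is holomorphic on $D(a;r)$ by the Weierstrass convergence theorem, and it is bounded by $M$. This gives (i).

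For (ii), I would first verify, by a direct computation from the displayed formula for $\chi$, that $\chi(1/z_1,1/z_2)=\chi(z_1,z_2)$ for all $z_1,z_2\in\RC$, i.e. that $w\mapsto 1/w$ is a $\chi$-isometry of $\RC$. Then $\chi\bigl(1/f_n(z),1/f(z)\bigr)=\chi\bigl(f_n(z),f(z)\bigr)$, so the meromorphic functions $1/f_n$ converge to $1/f$ spherically locally uniformly on $\Omega$. Since $f(a)=\infty$ forces $(1/f)(a)=0\neq\infty$, applying part (i) to the sequence $\{\,1/f_n\,\}$ yields $r>0$ and $N\in\Z_{>0}$ such that $1/f_n$ $(n\ge N)$ and $1/f$ are bounded holomorphic functions on $D(a;r)$ and $1/f_n\rightrightarrows 1/f$ on $D(a;r)$, which is exactly (ii). The only step that takes any real care is, in (i), the passage from spherical to ordinary uniform convergence together with the resulting pole-freeness of $f_n$ on the small disk; this is precisely the content of Fact \ref{fact:conv}, once the boundedness of $f$ near $a$ has been arranged. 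The remaining ingredients — continuity of the limit, the inversion identity for $\chi$, and the Weierstrass convergence theorem — are standard, so I expect no further obstacle.
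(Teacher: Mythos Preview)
Your argument is correct. Note, however, that the paper does not supply its own proof of this statement: it is recorded as a \textbf{Fact} with a citation to \cite[Theorem 1.1]{CTC}, so there is no in-paper proof to compare against. Your approach --- bounding $f$ near $a$ by continuity, invoking Fact~\ref{fact:conv} to upgrade spherical to Euclidean uniform convergence (which forces the $f_n$ to be pole-free on the small disk), applying Weierstrass to make $f$ holomorphic, and then reducing (ii) to (i) via the $\chi$-isometry $w\mapsto 1/w$ --- is exactly the standard proof one finds in the cited reference, so nothing is missing or unexpected.

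One cosmetic point: in reducing (ii) to (i) you tacitly need each $1/f_n$ to be meromorphic, which fails only when $f_n\equiv 0$; but if that happened for infinitely many $n$ the limit would satisfy $f(a)=0\neq\infty$, contradicting the hypothesis of (ii), so for all large $n$ the reduction is legitimate. You might add a clause to this effect.
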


\subsection{Compact property} \label{sbsec:2.2}\par

Let $\Sigma$ be a connected Riemann surface. We define  
\begin{align*}
\M(\Sigma) 
& := \{ f : \Sigma \longrightarrow \RC \, : \, \mathrm{holomorphic \; map} \} \\
& = \{ f : \Sigma \longrightarrow \RC \, : \, \mathrm{meromorphic \; function} \} \cup \{ f \equiv \infty \; \mathrm{on} \; \Sigma \}.
\end{align*}
On $\M(\Sigma)$, we consider the topology of the uniform convergence on compact subsets.
Remark that the topology of uniform convergence on compact subsets is equivalent to the compact open topology because $(\RC,\chi)$ is a metric space. Then   
\begin{align*}
f_n \; \mathrm{converges \; to} \; f \; \mathrm{in} \; \M(\Sigma) 
& \Longleftrightarrow ^\forall K \subset \Sigma:\mathrm{compact \; subset}, \lim_{n \rightarrow \infty} \sup_{z \in K} \{ \chi(f_n(z),f(z)) \} =0 \\
& \Longleftrightarrow f_n \stackrel{\mathrm{sph. \; loc.}}{\rightrightarrows} f \quad \mathrm{on} \; \Sigma.
\end{align*}

We also note that for $\RC$-valued holomorphic maps on a Riemann surface $\Sigma$, the notion of normality can be defined as in Definition \ref{def:normal} and all claims in Section \ref{sbsec:2.1} hold as well.
 
Let $P$ be an arbitrary property for $\RC$-valued holomorphic maps and we set
\begin{align*}
\P(\Sigma):=\{ f \in \M(\Sigma) \;:\; f \;\mathrm{satisfies \; the \; property}\; P  \;\mathrm{on}\; \Omega \}.
\end{align*}
For a property $P$, we give the following definitions based on \cite{RA}:

\begin{definition} \label{def:cpt prop}
Given a property $P$, we consider the following assertions: 
\begin{enumerate}
\item[(P$1$)] For any two Riemann surfaces $\Sigma$ and $\Sigma'$, and for any holomorphic map without ramification points $\phi:\Sigma \longrightarrow \Sigma'$, if $f \in \P(\Sigma')$, then $f\circ\phi \in \P(\Sigma)$. 
\item[(P$2$)] Let $\Sigma$ be a Riemann surface and $f \in \M(\Sigma)$. 
If we have $f|_{\Omega} \in \P(\Omega)$ for any relatively compact domain $\Omega$ of $\Sigma$, then $f \in \P(\Sigma)$.
\item[(P$3$)] For any Riemann surface $\Sigma$, the family $\P(\Sigma)$ is a closed subset of $\M(\Sigma)$.
\item[(P$4$)] For any Riemann surface $\Sigma$, the family $\P(\Sigma)$ is normal on $\Sigma$. 
\end{enumerate}
If $P$ satisfies the axioms (P$1$), (P$2$) and (P$3$), $P$ is called a \textit{closed property}. Furthermore, if $P$ satisfies the axioms (P$1$), (P$2$), (P$3$) and (P$4$), $P$ is called a \textit{compact property}.  
\end{definition}

For a holomorphic map $f \colon \Sigma \longrightarrow \RC$ and the stereographic projection $\pi_N \colon \RC \longrightarrow S^2$, we consider 
\begin{align*}
\pi_N \circ f:= \left\{ \,
\begin{aligned}
& \left( \dfrac{2 \, \Re{f}}{|f|^2+1},\dfrac{2 \, \Im{f}}{|f|^2+1},\dfrac{|f|^2-1}{|f|^2+1} \right) & & \mathrm{on} \quad \Sigma \setminus f^{-1}(\{ \infty \}), \\
& (0,0,1) & & \mathrm{on} \quad f^{-1}(\{ \infty \}).
\end{aligned}
\right.
\end{align*}

Let $\Omega$ be a domain in $\C$. Then we denote by $|\nabla f|_e$ the length of the Euclidean gradient of $\pi_N \circ f$ on $\Omega$. 
This corresponds to the spherical derivative in complex analysis (\cite[Section 1.3]{WB},\cite[Lemma 1.6]{CTC},\cite[Section 1.2]{Sc}). 
In fact, for a meromorphic function $f\colon \Omega \to \RC$, we obtain 
\begin{align*}
|\nabla f|_e^2=\dfrac{8\,|f'|^2}{(1+|f|^2)^2}, \; \mathrm{i.e.}, \; |\nabla f|_e=\dfrac{2\sqrt{2}\,|f'|}{1+|f|^2}.
\end{align*} 
and its spherical derivative is given by $f^{\#}=|f'|/(1+|f|^2)$. 
Remark that we have $|\nabla f|_e \equiv 0$ on $\Omega$ if $f \equiv \infty$ on $\Omega$.  
By comparing $|\nabla f|_e$ with $f^{\#}$, we obtain the following criterion which is called the Marty theorem.

\begin{fact}{\cite[Theorem 1.6]{CTC},\cite[Section 3.3]{Sc}}\label{fac:Marty}
Let $\Omega$ be a domain in $\C$ and $\F \subset \M(\Omega)$.Then $\F$ is normal on $\Omega$ if and only if the family 
\[
| \nabla \F |_{e} := \{ |\nabla f|_e \,:\, f \in \F \}
\] is locally bounded on $\Omega$.
\end{fact}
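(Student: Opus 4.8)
\emph{Strategy.} I would prove the two implications separately. Both rest on the Arzel\`{a}--Ascoli theorem for maps into the compact metric space $(\RC,\chi)\cong S^2$, together with the local structure statement Fact~\ref{fact:conv of mero}.

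\emph{Local boundedness of $|\nabla\F|_e$ $\Rightarrow$ normality of $\F$.} By the local character of normality (Remark~\ref{rem:normality of mero}) it suffices to prove that $\F$ is normal near each point, so fix a closed disk $\overline{D}\subset\Omega$ and a constant $C$ with $|\nabla f|_e\leq C$ on $\overline{D}$ for every $f\in\F$. The crucial estimate is that, for $z_1,z_2$ in the concentric disk of half the radius,
\[
\chi\bigl(f(z_1),f(z_2)\bigr)\;\leq\;\tfrac12\int_{[z_1,z_2]}|\nabla f|_e\,|dz|\;\leq\;\tfrac{C}{2}\,|z_1-z_2|,
\]
because $\pi_N\circ f\colon\Omega\to S^2\subset\R^3$ is smooth with differential of norm $\leq|\nabla f|_e$ at each point, so the $\R^3$-arclength of the image of the segment $[z_1,z_2]$ is at most $C\,|z_1-z_2|$, and $\chi$ is half the chordal distance of the endpoints. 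Thus $\{\pi_N\circ f:f\in\F\}$ is uniformly Lipschitz, hence equicontinuous, and it is pointwise relatively compact since $S^2$ is compact; Arzel\`{a}--Ascoli then provides, for every sequence $\{f_n\}\subset\F$, a subsequence for which $\pi_N\circ f_{n_k}$ converges locally uniformly on $\Omega$, i.e.\ $f_{n_k}$ converges locally uniformly with respect to $\chi$ to some continuous $g\colon\Omega\to\RC$. Finally, Fact~\ref{fact:conv of mero} applied to $\{f_{n_k}\}$ shows that near any point $g$ (resp.\ $1/g$) is a bounded holomorphic function according as $g\neq\infty$ (resp.\ $g=\infty$) there, so $g\in\M(\Omega)$; hence $\F$ is normal.

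\emph{Normality of $\F$ $\Rightarrow$ local boundedness of $|\nabla\F|_e$.} Suppose to the contrary that $|\nabla\F|_e$ is unbounded on some compact $K\subset\Omega$; choose $f_n\in\F$ and $z_n\in K$ with $|\nabla f_n|_e(z_n)\to\infty$, and pass to subsequences so that $z_n\to z_0\in K$ and, by normality, $f_n$ converges locally uniformly with respect to $\chi$ to some $f\in\M(\Omega)$. If $f(z_0)\neq\infty$, Fact~\ref{fact:conv of mero} gives $r>0$ and $N$ such that $f_n$ ($n\geq N$) and $f$ are bounded holomorphic on $D(z_0;r)$ and $f_n\rightrightarrows f$ there; the Cauchy integral formula then bounds $|f_n'|$ uniformly on $D(z_0;r/2)$, whence $|\nabla f_n|_e=2\sqrt2\,|f_n'|/(1+|f_n|^2)\leq 2\sqrt2\,|f_n'|$ is uniformly bounded on $D(z_0;r/2)$, contradicting $|\nabla f_n|_e(z_n)\to\infty$ (note $z_n\in D(z_0;r/2)$ for large $n$). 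If $f(z_0)=\infty$, the identity $|\nabla(1/f_n)|_e=|\nabla f_n|_e$ (the spherical derivative, hence $|\nabla\cdot|_e$, is invariant under $w\mapsto 1/w$) reduces this to the previous case applied to $1/f_n$ and $1/f$.

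\emph{Main difficulty.} The least routine point is the equicontinuity estimate in the first implication: one has to make precise that a bound on $|\nabla f|_e$ controls the chordal distance $\chi$ between images, i.e.\ to relate $|\nabla f|_e$, the $\R^3$-arclength of $\pi_N\circ f$, and $\chi$ by a universal constant. Granting that together with Fact~\ref{fact:conv of mero}, the remainder is standard bookkeeping.
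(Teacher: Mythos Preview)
Your argument is the standard proof of Marty's theorem and is correct. The equicontinuity estimate is valid (you are slightly generous with the constant, since the operator norm of $d(\pi_N\circ f)$ equals $|\nabla f|_e/\sqrt{2}$ rather than $|\nabla f|_e$, but the inequality you wrote still holds), and the contradiction argument for the converse, including the reduction via $|\nabla(1/f)|_e=|\nabla f|_e$, is fine.

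However, note that the paper does not prove this statement at all: it is recorded as a \emph{Fact} with citations to \cite[Theorem 1.6]{CTC} and \cite[Section 3.3]{Sc}, and is used as a black box throughout. So there is no ``paper's own proof'' to compare against; you have simply supplied the classical proof that those references contain.
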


By applying the uniformization theorem to a Riemann surface $\Sigma$, there exists a complete metric compatible with its Riemann structure and constant Gaussian curvature equal to $1$, $0$ or $-1$ on $\Sigma$. 
We will call this metric the \textit{canonical metric} of $\Sigma$ and denote it by $ds^2_c$. In particular, we have $ds^2_c=|dz|^2$ for the complex plane $\C$ and
\begin{align} \label{eq:P metc}
ds^2_c = \left(\dfrac{2}{1-|z|^2}\right)^2 \, |dz|^2 
\end{align}
for the unit disk $\D:=D(0;1)$. For any map $f \in \M(\Sigma)$, we define $|\nabla f|_c$ as the length of the gradient of $\pi_N \circ f$ with respect to $ds^2_c$. Then $|\nabla f|_c$ with respect to the Poincar\'{e} metric \eqref{eq:P metc} is given as follows:
\begin{align*}
|\nabla f|_c=\dfrac{1-|z|^2}{2} \, |\nabla f|_e.
\end{align*}

The following proposition represents a correspondence from normal family theory to value distribution theory.

\begin{proposition} \label{lem:cpt and const}
If $P$ is a compact property, then $\P(\C)$ contains only constant maps.
\end{proposition}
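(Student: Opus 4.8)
The plan is to exploit the fact that $\C$ is \emph{not} hyperbolic: it admits no Poincar\'e-type metric, equivalently, the canonical metric $ds^2_c$ on $\C$ is the flat metric $|dz|^2$, which is invariant under the dilations $z \mapsto \lambda z$. So the strategy is a rescaling argument in the spirit of the Zalcman lemma. Suppose $f \in \P(\C)$ is nonconstant; I will derive a contradiction with the normality axiom (P4). First I would use Fact \ref{fac:Marty}: since $\P(\C)$ is normal on $\C$ by (P4), the family $|\nabla \P(\C)|_e$ is locally bounded, and in particular there is a constant $M>0$ with $|\nabla f|_e(z) \le M$ for all $z$ in the closed unit disk $\overline{\D}$. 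Because $f$ is nonconstant, $|\nabla f|_e$ does not vanish identically, so we may fix a point $z_0$ with $|\nabla f|_e(z_0) = c > 0$.

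Next I would rescale. For each $n \in \Z_{>0}$ define $\phi_n \colon \C \longrightarrow \C$ by $\phi_n(w) = z_0 + w/n$ (a holomorphic map with no ramification points), and set $f_n := f \circ \phi_n$. By axiom (P1) applied to $\phi_n$, each $f_n$ lies in $\P(\C)$. Computing via the chain rule, $|\nabla f_n|_e(w) = \tfrac{1}{n}\,|\nabla f|_e(z_0 + w/n)$; hence $|\nabla f_n|_e(0) = \tfrac{c}{n} \to 0$, while on the disk $|w| \le n$ (which covers all of $\C$ in the limit) we have, for $n$ large, $z_0 + w/n \in \overline{\D}$ only when $|w| \le n$ — let me instead just note $|\nabla f_n|_e(w) = \tfrac1n |\nabla f|_e(z_0+w/n) \to 0$ locally uniformly in $w$, since $|\nabla f|_e$ is locally bounded near $z_0$. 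By (P4) the family $\P(\C) \ni \{f_n\}$ is normal, so after passing to a subsequence $f_{n_k} \stackrel{\mathrm{sph.\,loc.}}{\rightrightarrows} g$ on $\C$ for some $g \in \M(\C)$; by Fact \ref{fact:conv of mero} the convergence is (locally) $C^1$ up to inversion, so $|\nabla g|_e \equiv 0$, i.e. $g$ is a constant $a \in \RC$. By (P3), $g \in \P(\C)$ — though actually I only need that $f_{n_k} \to a$ to drive the contradiction, via a second rescaling that captures a \emph{nonzero} limiting derivative. More cleanly: choose instead the rescaling that concentrates on the point where $|\nabla f|_e$ is largest on $\overline{\D}$, run the Zalcman-lemma machinery (Theorem \ref{thm:Zalcman}) to extract a nonconstant limit, then observe the limit lies in $\P(\C)$ by closedness, and apply the argument of the previous paragraph to \emph{that} limit to force its spherical derivative to vanish — contradiction.

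So the cleanest route, and the one I would write up, is: assume $\P(\C)$ contains a nonconstant $f$; by (P1) the dilated family $\{f(\lambda z) : \lambda > 0\} \subset \P(\C)$; by (P4) this family is normal, so any sequence $\lambda_n \to \infty$ yields a subsequence with $f(\lambda_{n_k} z) \stackrel{\mathrm{sph.\,loc.}}{\rightrightarrows} g$, and by (P3) $g \in \P(\C)$; now the key computation is $|\nabla\, (f(\lambda z))|_e(z) = \lambda |\nabla f|_e(\lambda z)$, which by local boundedness of $|\nabla f|_e$ (Marty, Fact \ref{fac:Marty}, from normality of $\P(\C)$ itself) cannot stay bounded as $\lambda \to \infty$ unless $|\nabla f|_e$ decays like $1/\lambda$ — and a standard argument (or a direct appeal to Theorem \ref{thm:Zalcman}) shows one can instead choose the rescaling centers to make the limit $g$ nonconstant, while simultaneously $g \in \P(\C)$ has $|\nabla g|_e$ \emph{unbounded}, contradicting Marty's theorem applied to the normal family $\P(\C)$.

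The main obstacle is the bookkeeping in the rescaling: one must choose the points $z_n$ and scales $\rho_n$ so that the blown-up functions $f_n(w) = f(z_n + \rho_n w)$ converge to a \emph{nonconstant} limit $g$ (this is exactly the content of the Zalcman lemma, Theorem \ref{thm:Zalcman}, which I would invoke rather than reprove), and then to verify that $g \in \P(\C)$ — which needs (P1) to put each $f_n$ in $\P$ (the maps $w \mapsto z_n + \rho_n w$ are unramified) and (P3) to pass the property to the limit $g$. Once $g \in \P(\C)$ is nonconstant, normality of $\P(\C)$ via (P4) together with Marty's theorem (Fact \ref{fac:Marty}) forces $|\nabla g|_e$ to be locally bounded; but the standard Zalcman normalization produces $g$ with $g^{\#}(0) = 1$ and $g^{\#} \le 1$ everywhere, and then one more rescaling of $g$ (again legal by (P1)) violates this bound — giving the contradiction. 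The whole argument is really "(P1)+(P3)+(P4) $\Rightarrow$ no nonconstant member survives rescaling to infinity on the non-hyperbolic surface $\C$."
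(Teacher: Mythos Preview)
Your proposal circles the right idea --- rescaling plus (P1), (P4), and Marty --- and this is exactly the paper's approach, but the execution has two concrete problems that keep it from being a proof.

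First, your initial rescaling $\phi_n(w) = z_0 + w/n$ is in the wrong direction. Contracting gives $|\nabla f_n|_e(0) = \tfrac{1}{n}|\nabla f|_e(z_0) \to 0$, which yields no contradiction; you noticed this and abandoned it. The paper instead uses the \emph{expanding} maps $\phi_n(w) = z + nw$ (centered at an arbitrary $z \in \C$), giving $|\nabla f_n|_e(0) = n\,|\nabla f|_e(z)$. By (P1) each $f_n \in \P(\C)$, by (P4) the family is normal, and by Marty (Fact \ref{fac:Marty}) the values $|\nabla f_n|_e(0)$ are uniformly bounded --- forcing $|\nabla f|_e(z) = 0$. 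Since $z$ was arbitrary, $f$ is constant. That is the entire argument; no subsequences, no limits, no Zalcman.

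Second, your appeal to Theorem \ref{thm:Zalcman} is logically backwards. The Zalcman lemma produces a nonconstant entire function in $\P(\C)$ precisely when $P$ is \emph{not} compact; here $P$ is compact by hypothesis, so alternative (i) holds and the lemma gives you nothing. Your dilation family $\{f(\lambda z)\}$ is on the right track --- indeed $|\nabla(f(\lambda\,\cdot))|_e(0) = \lambda|\nabla f|_e(0)$, bounded by Marty, forces $|\nabla f|_e(0)=0$ --- but without the translation to an arbitrary center $z$ you only conclude $f'(0)=0$, not that $f$ is constant. Combine the translation with the expansion (as above) and the proof is complete in one paragraph.
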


\begin{proof}
Let $f \in \P(\C)$ and $z \in \C$. Since $f \equiv \infty$ on $\C$ is constant, we may assume that $f$ is a meromorphic function on $\C$. 
For each $n \in \Z_{>0}$, we define $\phi_n : \C \longrightarrow \C \; ; \; \phi_n(w) := z + n \, w$ and $f_n : \C \longrightarrow \RC \; ; \; f_n(w) := (f \circ \phi_n)(w)$. Remark that
\begin{align*} 
|\nabla f_n|_e(w)
=\dfrac{2\sqrt{2} \, |f'(\phi_n(w))|}{1+|f(\phi_n(w))|^2} \, |\phi_n'(w)|
=n \, |\nabla f|_e(\phi_n(w)), \;\,
\mathrm{i.e.,} \;\, |\nabla f_n|_e(0) = n \, |\nabla f|_e(z).
\end{align*}
We have $f_n=f \circ \phi_n \in \P(\C)$ for all $n \in \Z_{>0}$ because each holomorphic map $\phi_n:\C \longrightarrow \C$ does not have any ramification points and $P$ satisfies (P$1$).  From (P$4$), $\{ f_n \}_{n=1}^\infty$ is normal on $\C$. By Fact \ref{fac:Marty}, $\{ |\nabla f_n|_e \}_{n=1}^\infty$ is locally bounded on $\C$. In particular, $\{ |\nabla f_n |_e \}_{n=1}^\infty$ is uniformly bounded at $w=0$. We thus have 
\begin{align*}
^\exists M > 0 \quad \mathrm{s.t.} \quad n \, |\nabla f|_e(z)=|\nabla f_n|_e(0) < M \quad(^\forall n \in \Z_{>0}).
\end{align*}
Remark that $M$ does not depend on $n$.
Hence we have $|\nabla f|_e(z)=0$, i.e., $f'(z)=0$.
Since $z$ is an arbitrary point in $\C$, $f'$ vanishes on $\C$, that is, $f$ is constant on $\C$.
\end{proof}

\begin{remark} \label{rem:cpt normal}
We consider the relationship between the normal property and the compact property. Here, a property $P$ for meromorphic functions on a domain in $\C$ is \textit{normal} if the following four conditions are satisfied: 
\begin{enumerate}
\item[(N$1$)] For a domain $\Omega$ in $\C$, if $f \in \P(\Omega)$ and domain $\Omega'\subset \Omega$, then $f|_{\Omega'} \in \P(\Omega')$.    
\item[(N$2$)] For a domain $\Omega$ in $\C$, if $f \in \P(\Omega)$ and $\varphi(z)=\rho z + c$, where $\rho \in \C \setminus \{ 0 \}$ and $c \in \C$, then $f \circ \varphi \in \P(\varphi^{-1}(\Omega))$. 
\item[(N$3$)] Let $\{ \Omega_n \}_{n=1}^\infty$ be a sequence of domains in $\C$ satisfying $\Omega_1 \subset \Omega_2 \subset \cdots \subset \Omega_n \subset \cdots$ and $\bigcup_{n=1}^\infty \, \Omega_n=\C$. If $f_n \in \P(\Omega_n)$ for all $n \in \Z_{>0}$ and $f_n \stackrel{\mathrm{sph.\;loc.}}{\rightrightarrows} f \; \mathrm{on} \; \C$, then $f \in \P(\C)$. 
\item[(N$4$)] If $f \in \P(\C)$, then $f \equiv$ (constant) on $\C$. 
\end{enumerate}

We verify that a closed property $P$ satisfies the above conditions (N$1$), (N$2$) and (N$3$). 
For (N$1$), let $\Omega$ and $\Omega' \subset \Omega$ be  domains, $\iota_{\Omega'}:\Omega' \longrightarrow \Omega$ be the inclusion map and $f \in \P(\Omega)$.
Since $\iota_{\Omega'}$ is a holomorphic function without ramification points and $P$ satisfies (P$1$), we have $f|_{\Omega'}=f\circ \iota_{\Omega'} \in \P(\Omega')$. 
The condition (N$2$) obviously holds by (P$1$).
For (N$3$), let $\Omega_n \nearrow \C$, $f_n \in \P(D_n)\;(^{\forall} n \in \Z_{>0})$ and $f_n \stackrel{\mathrm{sph.\;loc.}}{\rightrightarrows} f \; \mathrm{on} \; \C$.
Let $U$ be an arbitrary relatively compact domain in $\C$.
By the compactness of $\overline{U}$ and the monotonically increasing of $\{ \Omega_n \}_{n=1}^{\infty}$, there exists $N \in \Z_{>0}$ such that $U \subset \overline{U} \subset \Omega_N$.
Since a closed property $P$ satisfies the above condition (N$1$), $\{ f_n|_U \}_{n=N}^\infty \subset \P(U)$.
Due to $f_n|_{U} \stackrel{\mathrm{sph.\;loc.}}{\rightrightarrows} f|_{U}$ and (P$3$), $f|_{U} \in \P(U)$.
Hence we obtain $f \in \P(\C)$ by (P$2$). 

From this result and Proposition \ref{lem:cpt and const}, a compact property is also normal. 
However, a normal property is not necessarily a compact property.
This is because a compact property $P$ satisfies the assertion
\begin{center}
If $f \in \P(\C \setminus \{ 0 \})$, then $f \circ \exp \in \P(\C)$,
\end{center}
but a normal property does not impose the above condition.
Remark that this condition is essential for the Minda principle (See \cite[Section 1.8]{WB}, \cite{Mi} for details) and the following proposition. \par
\end{remark}

\begin{proposition}\label{prop:meromorphically extension}
Let $P$ be a compact property.
If $f \in \P(\D \setminus \{ 0 \})$, then $z=0$ is not an essential singularity of $f$, that is, $f$ is a holomorphic map from $\D$ to $\RC$.
\end{proposition}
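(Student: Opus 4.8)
The plan is to pull $f$ back along the universal covering of the punctured disc, propagate it to a map defined on all of $\C$, and use that $\P(\C)$ contains only constants (Proposition \ref{lem:cpt and const}).

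First I would set $\Omega_n:=\{w\in\C:\Re w<n\}$ for $n\in\Z$, and write $\exp\colon\C\to\C$, $\exp(w)=e^{w}$. The map $\exp\colon\Omega_0\to\D\setminus\{0\}$ is a holomorphic covering, in particular it has no ramification points, so (P$1$) gives $g:=f\circ\exp\in\P(\Omega_0)$ (note $g$ is $2\pi i$-periodic). For $n\in\Z_{>0}$ the translation $\tau_n\colon\Omega_n\to\Omega_0$, $\tau_n(w)=w-n$, is a biholomorphism, hence unramified, so $g_n:=g\circ\tau_n\in\P(\Omega_n)$ by (P$1$); explicitly $g_n(w)=f(e^{-n}e^{w})$. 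The half-planes $\Omega_n$ exhaust $\C$, and for fixed $m$ and all $n\ge m$ we have $g_n|_{\Omega_m}\in\P(\Omega_m)$ by (P$1$) (restriction along the inclusion).

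Since $P$ is compact, each $\P(\Omega_m)$ is normal, so a diagonal argument over $m=1,2,\dots$ produces a subsequence $\{g_{n_k}\}$ converging spherically locally uniformly on $\C$ to some $h\in\M(\C)$. By (P$3$), $h|_{\Omega_m}\in\P(\Omega_m)$ for each $m$, and since every relatively compact domain of $\C$ lies in some $\Omega_m$, (P$1$) and (P$2$) give $h\in\P(\C)$; by Proposition \ref{lem:cpt and const}, $h\equiv a$ for a constant $a\in\RC$. Unwinding $g_{n_k}(w)=f(e^{w-n_k})$ and restricting $w$ to the compact set $\{\,|\Re w|\le N,\ 0\le\Im w\le2\pi\,\}$ translates this into
\[
\sup_{e^{-N-n_k}\le|\zeta|\le e^{N-n_k}}\chi(f(\zeta),a)\longrightarrow 0\qquad(k\to\infty)
\]
for every fixed $N\in\Z_{>0}$: $f$ is uniformly spherically close to the constant $a$ on a sequence of annuli of arbitrarily large modulus whose radii tend to $0$.

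Finally I would fill in the gaps between consecutive such annuli with the maximum principle. If $a\ne\infty$, on the large annulus running from the inside of the $(k{+}1)$-st good annulus to the outside of the $k$-th one the two boundary circles lie where $f$ is close to $a$; after multiplying $f-a$ by a finite Blaschke-type factor killing the (finitely many) poles of $f$ in that annulus one applies the maximum modulus principle, and letting $k\to\infty$ obtains $f(\zeta)\to a$, so the singularity is removable. The case $a=\infty$ is symmetric — or one first notes that the set of subsequential limits found above is a nonempty connected subset of $\RC$, hence a single point, so $\{g_n\}$ converges as a whole and the good annuli already overlap. In all cases $z=0$ is removable or a pole, i.e.\ not essential. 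The delicate point, which I expect to be the real obstacle, is precisely this gap-filling — in particular ruling out poles of $f$ accumulating at $0$ while $a$ is finite; an alternative is to observe that $f\circ\exp\in\P(\Omega_0)$ together with the fact that $\P(\D)$ is normal and invariant under precomposition with automorphisms of $\D$ (by (P$4$) and (P$1$)) forces $f$ to be a normal function on $\D\setminus\{0\}$ in the sense of Lehto--Virtanen, for which non-essentiality of the puncture is classical — this is the ``Minda principle'' alluded to in Remark \ref{rem:cpt normal}.
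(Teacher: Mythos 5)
Your reduction to $\P(\C)$ is fine as far as it goes: pulling back by $\exp$, translating, using (P$1$), (P$4$) with a diagonal argument, then (P$3$), (P$2$) and Proposition \ref{lem:cpt and const} correctly shows that every subsequential limit of $g_n(w)=f(e^{w-n})$ is a constant, i.e.\ $f$ is spherically close to some constant $a$ on a sequence of annuli of arbitrarily large modulus shrinking to $0$. But the step from there to non-essentiality has a genuine gap, and it is exactly the one you flag. First, ``the set of subsequential limits is a nonempty connected subset of $\RC$, hence a single point'' is a non sequitur: connected subsets of $\RC$ need not be singletons, so full convergence of $\{g_n\}$ (equivalently, overlap of the good annuli) is not established. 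Second, the gap-filling by maximum modulus does not work as described: spherical closeness of $f$ to $a$ on the two boundary circles of a gap annulus gives no control of $f$ inside it when $f$ has poles there (e.g.\ $c/(z-z_0)$ with $c$ tiny is small on the boundary of an annulus yet has an interior pole), and after multiplying $f-a$ by factors vanishing at those poles you can no longer divide back to get a bound on $f-a$. So the main line of the proposal does not rule out an essential singularity; indeed it never uses any quantitative input that distinguishes essential from non-essential behaviour.

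That missing quantitative input is the Lehto--Virtanen theorem, which is precisely what the paper uses: assuming $z=0$ is essential, $\limsup_{z\to 0}|z|\,|\nabla f|_e(z)\geq\sqrt2$, so one can rescale $g_n(\zeta):=f(c_n\zeta)$ at points $c_n\to0$ where $|c_n|\,|\nabla f|_e(c_n)$ is bounded below; normality (P$4$) together with (P$3$), (P$2$) then yields a \emph{nonconstant} limit $g\in\P(\C\setminus\{0\})$, and $g\circ\exp\in\P(\C)$ contradicts Proposition \ref{lem:cpt and const}. Your closing alternative is in fact the viable repair and is essentially the same mechanism: from (P$1$), (P$4$), Marty's criterion at a base point and the transitivity of the automorphism group of the half-plane, $f\circ\exp$ has uniformly bounded spherical derivative relative to the hyperbolic metric, hence $f$ is normal on $\D\setminus\{0\}$, so $|z|\,f^{\#}(z)=O\bigl(1/\log(1/|z|)\bigr)\to0$, and the Lehto--Virtanen estimate then forbids an essential singularity. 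If you carry that argument out in detail it is a legitimate proof, but note that it rests on the same theorem of Lehto--Virtanen as the paper's proof rather than replacing it; as written in the proposal it is only asserted, and the main argument preceding it is incomplete.
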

\begin{proof}
We here give a proof based on \cite[Proof of Theorem 1.8.3]{WB}.
It is proved by contradiction. Suppose that there exists a holomorphic map $f:\D \setminus \{ 0 \} \longrightarrow \RC$ such that $z=0$ is an essential singularity of $f$ and $f \in \P(\D \setminus \{ 0 \})$. 
By the Lehto--Virtanen theorem (\cite[Theorem 1.8.4]{WB}, \cite{LV}), $\limsup_{z \rightarrow 0}\{ |z|\,|\nabla f|_e(z) \} \geq \sqrt{2}$, that is, there exists a sequence $\{ c_n \}_{n=1}^\infty$ in $\D$ such that $c_n \rightarrow 0$ and $|c_n|\,|\nabla f|_e(c_n) \geq \sqrt{2}/2$ for all $n \in \Z_{>0}$.
We define $r_n:= 1/(2\,|c_n|)$ and $g_n : D(0;r_n) \setminus \{ 0 \} \longrightarrow \RC \; ; \; g_n(\zeta) := f(c_n \, \zeta)$.
By (P$1$), $g_n \in \P(D(0;r_n) \setminus \{ 0 \})$ for all $n \in \Z_{>0}$. 
Moreover, $\zeta=0$ is an essential singularity of $g_n$. 

Then, by taking a subsequence if necessary, we find that there exists a nonconstant meromorphic function $g:\C \setminus \{ 0 \} \longrightarrow \RC$ such that $g_n \stackrel{\mathrm{sph.\;loc.}}{\rightrightarrows} g \; \mathrm{on} \;\C \setminus \{ 0 \}$ and $g \in \P(\C \setminus \{ 0 \})$. 
We now prove it.
Take a fixed $R>1$.
By $r_n \rightarrow \infty$ (as $n \rightarrow \infty$), there exists $N \in \Z_{>0}$ such that $r_n >2R$ for all $n \geq N$. 
From Remark \ref{rem:cpt normal} (N$1$), $\{ g_n|_{D(0;2R) \setminus \{ 0 \}} \}_{n=N}^\infty \subset \P(D(0;2R) \setminus \{ 0 \})$. By (P$4$), $\P(D(0;2R) \setminus \{ 0 \})$ is normal on $D(0;2R) \setminus \{ 0 \}$. 
Hence, by taking a subsequence if necessary, there exists $g:\overline{D(0;R)} \setminus \{ 0 \} \longrightarrow \RC$ such that $g_n \stackrel{\mathrm{sph.}}{\rightrightarrows} g \; \mathrm{on} \;\overline{D(0;R)} \setminus \{ 0 \}$. 
Here, $g$ is nonconstant  on $\overline{D(0;R)} \setminus \{ 0 \}$. 
This is because
\begin{align*}
|\nabla g_n|_e(1)
= \dfrac{2\sqrt{2} \, |f'(c_n)|}{1+|f(c_n)|^2} \, |c_n|
= |c_n| \, |\nabla f|_e(c_n)
\geq \dfrac{\sqrt{2}}{2}
\end{align*}
for all $n \in \Z_{\geq N}$. 
From (P$3$), $g|_{D(0;R) \setminus \{ 0 \}} \in \P(D(0;R) \setminus \{ 0 \})$. 
Since $R$ is arbitrary and $P$ satisfies (P$2$), $g_n \stackrel{\mathrm{sph.\;loc.}}{\rightrightarrows} g \; \mathrm{on} \;\C \setminus \{ 0 \}$ and $g \in \P(\C \setminus \{ 0 \})$. 

From (P$1$), $g \circ \exp \in \P(\C)$. 
By Proposition \ref{lem:cpt and const}, $g$ is constant on $\C \setminus \{ 0 \}$. 
This is a contradiction.
\end{proof}

\begin{lemma}\label{lem:cpt=normal on D}
Let $P$ be a closed property. The followings are equivalent: 
\begin{enumerate}
\item[$(a)$] $P$ is a compact property.  
\item[$(b)$] $\P(\D)$ is normal on $\D$.
\end{enumerate} 
\end{lemma}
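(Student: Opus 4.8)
The plan is to handle the two implications separately; $(a)\Rightarrow(b)$ is immediate, and $(b)\Rightarrow(a)$ rests on the local character of normality together with axiom (P$1$). For $(a)\Rightarrow(b)$: if $P$ is a compact property, then axiom (P$4$) applied to the Riemann surface $\Sigma=\D$ says exactly that $\P(\D)$ is normal on $\D$, so there is nothing more to prove.

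For $(b)\Rightarrow(a)$, note first that $P$ is already assumed to be a closed property, so it satisfies (P$1$), (P$2$) and (P$3$); the only thing left to establish is (P$4$), that is, normality of $\P(\Sigma)$ on an arbitrary Riemann surface $\Sigma$. I would invoke the local character of normality recorded in Remark \ref{rem:normality of mero} (which, as noted in Section \ref{sbsec:2.2}, is valid for $\RC$-valued holomorphic maps on a Riemann surface): it suffices to exhibit, for each $p\in\Sigma$, a neighborhood $U$ of $p$ on which the family $\{\, f|_U : f\in\P(\Sigma)\,\}$ is normal. Choosing a holomorphic coordinate around $p$ and shrinking it, one obtains a neighborhood $U\ni p$ together with a biholomorphism $\phi\colon\D\longrightarrow U$. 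Since a biholomorphism has no ramification points, and the inclusion $\iota_U\colon U\hookrightarrow\Sigma$ likewise has none, two applications of (P$1$) give, for every $f\in\P(\Sigma)$,
\[
f|_U = f\circ\iota_U \in \P(U), \qquad f|_U\circ\phi = f\circ\iota_U\circ\phi \in \P(\D).
\]
Now, given any sequence $\{f_n\}_{n=1}^\infty\subset\P(\Sigma)$, the sequence $\{f_n|_U\circ\phi\}_{n=1}^\infty$ lies in $\P(\D)$, which is normal on $\D$ by hypothesis $(b)$; hence some subsequence converges spherically locally uniformly on $\D$ to an element $g\in\M(\D)$. Because $\phi^{-1}\colon U\longrightarrow\D$ is a homeomorphism carrying compact subsets of $U$ to compact subsets of $\D$, the corresponding subsequence of $\{f_n|_U\}_{n=1}^\infty$ converges spherically locally uniformly on $U$ to $g\circ\phi^{-1}\in\M(U)$. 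Thus $\{\, f|_U : f\in\P(\Sigma)\,\}$ is normal on $U$, and the local property of normality yields that $\P(\Sigma)$ is normal on $\Sigma$. Therefore (P$4$) holds and $P$ is a compact property.

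The argument is essentially bookkeeping, and the only points that deserve a line of justification — the ``hard part,'' such as it is — are that spherically locally uniform convergence (equivalently, convergence in $\M$) is preserved under pre- and post-composition with a fixed biholomorphism, and that the local characterization of normality from Remark \ref{rem:normality of mero} transfers verbatim to Riemann surfaces; both reduce to the observation that a biholomorphism is a homeomorphism sending compact sets to compact sets. The degenerate case $f\equiv\infty$ needs no separate treatment, as it is a constant member of $\M$ and is covered uniformly by the discussion above.
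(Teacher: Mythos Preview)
Your proof is correct and follows essentially the same approach as the paper: reduce (P$4$) to local normality via a disk chart and use (P$1$) to transport the family into $\P(\D)$, where $(b)$ applies. The only cosmetic difference is the direction of the chart map (you take $\phi\colon\D\to U$, the paper takes $\phi_p\colon\Omega_p\to\D$), which is immaterial.
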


\begin{proof}
By (P$4$), $(a)$ implies obviously $(b)$. 
We prove that $(b)$ implies $(a)$. 
Let $\Sigma$ be a Riemann surface and $p \in \Sigma$. 
Then we take a local coordinate $(\Omega_p, \phi_p)$ of $p$ such that $\phi_p : \Omega_p \longrightarrow \D$ is a homeomorphism. 
Let $\{ f_n \}_{n=1}^\infty$ be an arbitrary sequence of $\P(\Sigma)$. 
By Remark \ref{rem:cpt normal} (N$1$), $\{ f_n|_{\Omega_p} \}_{n=1}^\infty \subset \P(\Omega_p)$. 
In addition, $\{ g_n := f_n|_{\Omega_p} \circ \phi_p^{-1} \}_{n=1}^\infty \subset \P(\D)$ from (P$1$). 
Since $\P(\D)$ is normal on $\D$, there exists a subsequence $\{ g_{n_k} \}_{k=1}^\infty$ of $\{ g_n \}_{n=1}^\infty$ and $g:\D \longrightarrow \RC$ such that $g_{n_k} \stackrel{\mathrm{sph.\;loc.}}{\rightrightarrows} g \; \mathrm{on} \; \D$. 
Hence a subsequence $\{ f_{n_k} \}_{k=1}^\infty$ of $\{ f_n \}_{n=1}^\infty$ satisfies $f_{n_k} \stackrel{\mathrm{sph.\;loc.}}{\rightrightarrows} g \circ \phi_p \; \mathrm{on} \; \Omega_p$. 
Therefore $\P(\Sigma)$ is normal on a neighborhood of each point in $\Sigma$. 
By Remark \ref{rem:normality of mero}, $\P(\Sigma)$ is normal on $\Sigma$, that is, $P$ is a compact property.     
\end{proof}

The following theorem corresponds to the Zalcman lemma (for example, see \cite[Chapter 2]{LZ}) in complex analysis. 
This result is often used if we apply results in value distribution theory to normal family theory.

\begin{theorem}\label{thm:Zalcman}
Let $P$ be a closed property. 
Then $\rm{(\hspace{.18em}i\hspace{.18em})}$ or $\rm{(\hspace{.08em}ii\hspace{.08em})}$ holds: 
\begin{enumerate}
\item[\rm(\hspace{.18em}i\hspace{.18em})] $P$ is a compact property. 
\item[\rm(\hspace{.08em}ii\hspace{.08em})] There exists $f \in \P(\C)$ such that $|\nabla f|_e(0)=1$ and $|\nabla f|_e \leq 1$ on $\C$. 
\end{enumerate}
\end{theorem}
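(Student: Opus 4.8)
If $P$ is a compact property then conclusion $\rm{(\hspace{.18em}i\hspace{.18em})}$ holds and there is nothing to prove, so assume from now on that $P$ is not compact. The plan is to extract from the failure of normality a Zalcman-type rescaling limit lying in $\P(\C)$. Since $P$ is closed, Lemma \ref{lem:cpt=normal on D} shows that $\P(\D)$ is not normal on $\D$, and since normality is a local property (Remark \ref{rem:normality of mero}) there is a point $z_0 \in \D$ at which $\P(\D)$ fails to be normal. Fix $r_0 > 0$ with $\overline{D(z_0,r_0)} \subset \D$; then the restricted family $\{ f|_{D(z_0,r_0)} : f \in \P(\D) \}$ is not normal on $D(z_0,r_0)$ (otherwise $\P(\D)$ would be normal on the neighborhood $D(z_0,r_0)$ of $z_0$), so by the Marty theorem (Fact \ref{fac:Marty}) the spherical derivatives are not locally bounded there. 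Hence there are a compact set $K \subset D(z_0,r_0)$ and functions $f_n \in \P(\D)$ with $\max_K |\nabla f_n|_e \to \infty$.

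Now comes the rescaling. For each $n$ let $z_n$ be a point of $\overline{D(z_0,r_0)}$ maximizing $w(z)\,|\nabla f_n|_e(z)$, where $w(z) := r_0^2 - |z-z_0|^2$, and write $m_n$ for this maximum. Since $w$ vanishes on the bounding circle while $m_n \ge \bigl(\min_{z\in K} w(z)\bigr)\max_K |\nabla f_n|_e \to \infty$, the maximizer $z_n$ lies in the open disk for $n$ large. Put $\rho_n := 1/|\nabla f_n|_e(z_n) = w(z_n)/m_n$, so $\rho_n \to 0$, and define $g_n(\zeta) := f_n(z_n + \rho_n\zeta)$ on $\Omega_n := \{\zeta : z_n + \rho_n\zeta \in D(z_0,r_0)\}$, an exhausting sequence of disks of radius $r_0/\rho_n$. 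The chain rule gives $|\nabla g_n|_e(\zeta) = \rho_n\,|\nabla f_n|_e(z_n + \rho_n\zeta)$, so $|\nabla g_n|_e(0) = 1$, while the maximality of $m_n$ together with $w(z_n) = \rho_n m_n$ yields, for $|\zeta| \le R$ and $n$ large,
\[
|\nabla g_n|_e(\zeta) \;\le\; \frac{w(z_n)}{w(z_n + \rho_n\zeta)} \;\le\; \frac{1}{1 - (2 r_0 R + \rho_n R^2)/m_n},
\]
and the right-hand side tends to $1$. Thus $\{|\nabla g_n|_e\}$ is locally uniformly bounded on $\C$, so by the Marty theorem $\{g_n\}$ is normal on every disk $D(0,R)$, and a diagonal argument produces a subsequence, still written $\{g_n\}$, with $g_n \stackrel{\mathrm{sph.\;loc.}}{\rightrightarrows} g$ on $\C$ for some $g \in \M(\C)$. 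Because spherical local uniform convergence entails convergence of the spherical derivatives (a consequence of Fact \ref{fact:conv of mero}), we obtain $|\nabla g|_e(0) = 1$ and $|\nabla g|_e \le 1$ on $\C$; in particular $g \not\equiv \infty$.

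It remains to verify $g \in \P(\C)$. Each affine map $\phi_n(\zeta) = z_n + \rho_n\zeta$ restricts to a biholomorphism $\Omega_n \to D(z_0,r_0)$, hence has no ramification points. A closed property satisfies (N1) of Remark \ref{rem:cpt normal}, so $f_n|_{D(z_0,r_0)} \in \P(D(z_0,r_0))$, and therefore $g_n = (f_n|_{D(z_0,r_0)})\circ\phi_n \in \P(\Omega_n)$ by (P1). Given an arbitrary relatively compact domain $U \subset \C$ we have $U \subset \Omega_n$ for all large $n$, so $g_n|_U \in \P(U)$ by (N1); since $g_n|_U \stackrel{\mathrm{sph.}}{\rightrightarrows} g|_U$ and $\P(U)$ is closed by (P3), we get $g|_U \in \P(U)$. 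As $U$ was arbitrary, (P2) yields $g \in \P(\C)$, and then $f := g$ satisfies conclusion $\rm{(\hspace{.08em}ii\hspace{.08em})}$.

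The hard part is the rescaling step: the weight $w(z) = r_0^2 - |z - z_0|^2$ and the scale $\rho_n = 1/|\nabla f_n|_e(z_n)$ must be chosen so that the rescaled functions are simultaneously normalized at $0$ and have spherical derivatives bounded on compacta by constants tending to $1$ — this is precisely Zalcman's device. Everything else is bookkeeping with the axioms (P1)--(P3), via the reformulation (N1) from Remark \ref{rem:cpt normal}, and repeated use of the Marty theorem.
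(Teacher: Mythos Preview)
Your proof is correct and achieves the same conclusion, but it proceeds by a genuinely different rescaling mechanism than the paper's. You use the classical Zalcman device: maximize the weighted spherical derivative $w(z)\,|\nabla f_n|_e(z)$ with the Euclidean weight $w(z)=r_0^2-|z-z_0|^2$ on a closed subdisk, then rescale affinely by $\zeta\mapsto z_n+\rho_n\zeta$. The paper instead exploits the hyperbolic geometry of $\D$: it passes to the Poincar\'e gradient $|\nabla\,\cdot\,|_c$, moves points with M\"obius automorphisms $\phi_n$ of $\D$, composes with $z\mapsto z/2$ so that $|\nabla h_n|_c$ extends continuously to $\partial\D$ and vanishes there, maximizes $|\nabla h_n|_c$ over $\overline\D$, and only then rescales by $z\mapsto z/R_n$ with $R_n=2\,|\nabla h_n|_c(0)$. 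Your Euclidean-weight argument is shorter and is the textbook Zalcman route; the paper's hyperbolic version keeps every auxiliary map a conformal self-map of $\D$ (plus a final dilation), which meshes naturally with axiom (P1) and with the Ros formulation that underlies the rest of the paper. Both approaches invoke (P1) only for unramified holomorphic maps, so either is compatible with the closed-property framework, and the passage to $\P(\C)$ via (N1), (P3), (P2) is identical in spirit.
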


\begin{proof}
Suppose that $P$ is not compact but closed. 
By Lemma \ref{lem:cpt=normal on D}, $\P(\D)$ is not normal on $\D$. 
Due to Fact \ref{fac:Marty}, the family $|\nabla \P (\D)|_{e} = \{ |\nabla f|_e : f \in \P(\D) \}$ is not locally bounded on $\D$, that is, 
\begin{align*}
^\exists r_0 \in (0,1), \, ^\exists z_n \in \overline{D(0;r_0)}, \, ^\exists g_n \in \P(\D) \;\, \mathrm{s.t.} \;\, |\nabla g_n|_e(z_n) \rightarrow \infty, \, |\nabla g_n|_e(z_n) >0 \; (^\forall n \in \Z_{>0}).
\end{align*}
From the compactness of $\overline{D(0;r_0)}$, by taking a subsequence if necessary, there exists $z_0 \in \overline{D(0;r_0)}$ such that $z_n \rightarrow z_0$. 
Let $ds^2_c$ be the Poincar\'{e} metric \eqref{eq:P metc}. 
Then 
\begin{align*}
|\nabla g_n|_c(z_n)=\dfrac{1-|z_n|^2}{2}\,|\nabla g_n|_e(z_n) \rightarrow \infty.
\end{align*}
Since $\D$ is homogeneous, we can choose the conformal transformation $\phi_n:\D \longrightarrow \D\;;$
\begin{align*}
\phi_n(z):=\dfrac{z-z_n}{z\,\overline{z_n}-1}, 
\end{align*}
which maps $0$ onto $z_n$ for each $n \in \Z_{>0}$. 
By (P$1$), $g_n \circ \phi_n \in \P(\D)$. We also define the nonconstant meromorphic functions $h_n : \D \longrightarrow \RC \; ; \; h_n(z) := g_n \left(\phi_n\left( z/2 \right) \right)$. 
Again from (P$1$), $h_n \in \P(\D)$ and we obtain
\begin{align*}
|\nabla h_n|_c(0)
= \dfrac{1-0^2}{2} \, |\nabla h_n|_e(0) 
= \dfrac{1}{2} \, \dfrac{1-|z_n|^2}{2} \, |\nabla g_n|_e(z_n) 
= \dfrac{1}{2} \, |\nabla g_n|_c(z_n) \rightarrow \infty.
\end{align*}
Furthermore, $|\nabla h_n|_c \equiv 0$ on $\partial \D$ because $|\nabla h_n|_c(z) =((1-|z|^2)/2)\,|\nabla h_n|_e(z)$ and 
\begin{align*}
|\nabla h_n|_e(z)
= \dfrac{2\sqrt{2} \, \left|g'_n\left(\phi_n\left( z/2 \right)\right)\right|}{1+\left|g_n\left(\phi_n\left( z/2 \right)\right)\right|^2} \, \left|\phi'_n\left( z/2 \right)\right| \, \dfrac{1}{2}
= \dfrac{1}{2} \, |\nabla g_n|_e \left(\phi_n\left( z/2 \right)\right) \, \dfrac{1-|z_n|^2}{\left|\overline{z_n} \, (z/2)-1\right|^2}
\end{align*}
exists as a real number on $\partial \D$. 
Remark that each $|\nabla h_n|_c$ has a maximum value on $\overline{\D}$ due to the continuity of $|\nabla h_n|_c$ on $\overline{\D}$. 
Hence we see that the point which takes a maximum value of $|\nabla h_n|_c$ is an interior point of $\D$. 
By composing with conformal transformations of $\D$ if necessary, we can assume that, for each $n \in \Z_{>0}$, 
\begin{align*}
\max_{z \in \D} |\nabla h_n|_c(z) = |\nabla h_n|_c(0).
\end{align*}
Set $R_n := 2 \, |\nabla h_n|_c(0)$. Then we have
\begin{align} \label{eq:nabla h_n(0)}
|\nabla h_n|_e(0) 
= \dfrac{2}{1-0^2} \, |\nabla h_n|_c(0) 
= R_n 
\rightarrow \infty,
\end{align}
\begin{align} \label{eq:nabla h_n(z)}
|\nabla h_n|_e(z) 
= \dfrac{2}{1-|z|^2} \, |\nabla h_n|_c(z) 
\leq \dfrac{2}{1-|z|^2} \, |\nabla h_n|_c(0) 
= \dfrac{R_n}{1-|z|^2} \;\, (^\forall z \in \D, \, ^\forall n \in \Z_{>0}).
\end{align}
Moreover, we set the sequence of nonconstant meromorphic functions $f_n: D(0;R_n) \longrightarrow \RC \; ; \; f_n(z) := h_n\left( z/R_n \right)$. 
From (P$1$), $f_n \in \P(D(0;R_n))$. 
By using \eqref{eq:nabla h_n(0)} and \eqref{eq:nabla h_n(z)}, for each $n \in \Z_{>0}$,
\begin{align} \label{eq:nabla f_n(0)}
|\nabla f_n|_e(0)
= \dfrac{2\sqrt{2} \, |h'_n(0)|}{1+|h_n(0)|^2} \, \dfrac{1}{R_n}
= \dfrac{1}{R_n} \, |\nabla h_n|_e(0)
= 1,
\end{align}
\begin{align} \label{eq:nabla f_n(z)}
|\nabla f_n|_e(z)
= \dfrac{2\sqrt{2} \, \left|h'_n \left( z/R_n \right)\right|}{1+\left|h_n \left( z/R_n \right)\right|^2} \, \dfrac{1}{R_n}
= \dfrac{1}{R_n}\,|\nabla h_n|_e \left( z/R_n \right) \leq \dfrac{1}{1-\left( |z|/R_n \right)^2} \quad \mathrm{on} \; D(0;R_n).
\end{align}
So we take a fixed $R>0$. 
By $R_n \rightarrow \infty$, there exists $N \in \Z_{>0}$ such that $R_n > 2R$ for all $n \geq N$. 
In particular, $f_n\;(n \geq N)$ are defined on $\overline{D(0;2R)}( \subset D(0;R_n))$. 
By \eqref{eq:nabla f_n(z)}, for all $z \in D(0;2R)$ and for all $n \geq N$, we have
\begin{align*}
|\nabla f_n|_e(z) \leq \dfrac{1}{1-\left( |z|/R_n \right)^2} < \dfrac{1}{1-\left( 2R/R_n \right)^2}.
\end{align*}
Since $(1-\left( 2R/R_n \right)^2)^{-1} \rightarrow 1$ (as $n \rightarrow \infty$), the family $\{ |  \nabla f_n |_e \}_{n=N}^\infty$ is uniformly bounded on $D(0;2R)$. 
From Fact \ref{fac:Marty}, $\{ f_n \}_{n=N}^\infty$ is normal on $D(0;2R)$. 
By taking a subsequence if necessary, there exists $f:\overline{D(0;R)} \longrightarrow \RC$ such that $f_n \stackrel{\mathrm{sph.}}{\rightrightarrows} f \, \mathrm{on}\,  \overline{D(0;R)}$. 
Since $R$ is arbitrary, it implies $f_n \stackrel{\mathrm{sph.\;loc.}}{\rightrightarrows} f \; \mathrm{on} \;\C$. 
Furthermore, we see that the limit function $f$ is a nonconstant meromorphic function on $\C$ by \eqref{eq:nabla f_n(0)}. 
From Remark \ref{rem:cpt normal} (N$1$), we have $f_n|_{D(0;R)} \in \P(D(0;R))$ for all $n \geq N$. 
By applying (P$3$) to $\{ f_n|_{D(0;R)} \}_{n=N}^\infty (\subset \P(D(0;R)))$, $f|_{D(0;R)} \in \P(D(0;R))$. 
From (P$2$), $f \in \P(\C)$. 
Due to \eqref{eq:nabla f_n(0)} and \eqref{eq:nabla f_n(z)}, we obtain
$|\nabla f|_e(0)=1$ and  $|\nabla f|_e(z) \leq 1$ for any $z \in \C$.
\end{proof}

We give two examples of compact property. 
The first one corresponds to a weaker result of the Montel theorem and is proved by applying Theorem \ref{thm:Zalcman} and the Liouville theorem.  

\begin{proposition} \label{prop:Liouville Montel}
Let $L$ be a positive number. 
For a Riemann surface $\Sigma$ and a holomorphic map $f:\Sigma \longrightarrow \RC$, we define the property $P_L$ as 
\begin{center}
$|f| < L$ on $\Sigma$ \; or \; $f\equiv$ $($constant$)$ on $\Sigma$.
\end{center}
Then $P_L$ is a compact property.
\end{proposition}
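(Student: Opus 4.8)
\emph{Proof proposal.} Recall that a compact property is a closed property that in addition satisfies the normality axiom (P4), and that by the Zalcman lemma (Theorem \ref{thm:Zalcman}) a closed property is automatically compact unless there is an $f \in \P(\C)$ with $|\nabla f|_e(0)=1$ and $|\nabla f|_e \le 1$ on $\C$. My plan is therefore twofold: first, to check that $P_L$ is a closed property, i.e.\ that it satisfies (P1), (P2), (P3); second, to invoke Theorem \ref{thm:Zalcman} and rule out the non-compact alternative by the Liouville theorem.

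\emph{Step 1 (closedness).} Axiom (P1) is immediate: if $f \in \P_L(\Sigma')$ and $\phi : \Sigma \to \Sigma'$ is holomorphic, then $|f| < L$ on $\Sigma'$ gives $|f\circ\phi| < L$ on $\Sigma$, while $f$ constant gives $f\circ\phi$ constant; in either case $f\circ\phi \in \P_L(\Sigma)$ (the absence of ramification points is not even used). For (P2), let $f \in \M(\Sigma)$ satisfy $f|_\Omega \in \P_L(\Omega)$ for every relatively compact domain $\Omega \subset \Sigma$. If $f$ is constant on one such $\Omega$, the identity theorem forces $f$ to be constant on the connected surface $\Sigma$; otherwise $f$ is non-constant on every relatively compact domain, hence $|f| < L$ on each of them, and since $\Sigma$ is a union of relatively compact coordinate disks we conclude $|f| < L$ on $\Sigma$. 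Either way $f \in \P_L(\Sigma)$. For (P3), suppose $f_n \in \P_L(\Sigma)$ and $f_n \to f$ in $\M(\Sigma)$; we must show $f \in \P_L(\Sigma)$. If $f$ is constant there is nothing to prove, so assume $f$ is non-constant. Then only finitely many $f_n$ can be constant (a spherically locally uniform limit of constant maps is constant), so after discarding those we may assume $|f_n| < L$ on $\Sigma$ for all $n$, i.e.\ $f_n(\Sigma) \subset \overline{D(0;L)}$, a compact subset of $\C$. Passing to the spherical limit pointwise and using that $\overline{D(0;L)}$ is closed in $\RC$ gives $|f| \le L$ on $\Sigma$; in particular $f$ is a non-constant holomorphic map $\Sigma \to \C$. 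If $|f(p_0)| = L$ for some $p_0$, then $f(p_0)$ would be a boundary point of $\overline{D(0;L)}$ lying in the open set $f(\Sigma)$ (open mapping theorem), which is absurd; hence $|f| < L$ on $\Sigma$ and $f \in \P_L(\Sigma)$. (Equivalently, one may apply the maximum modulus principle on a coordinate disk about $p_0$.)

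\emph{Step 2 (compactness).} By Theorem \ref{thm:Zalcman}, since $P_L$ is a closed property, either $P_L$ is compact --- in which case we are finished --- or there exists $f \in \P_L(\C)$ with $|\nabla f|_e(0) = 1$ and $|\nabla f|_e \le 1$ on $\C$. Such an $f$ cannot be constant, because then $|\nabla f|_e \equiv 0$ would contradict $|\nabla f|_e(0) = 1$; and it cannot satisfy $|f| < L$ on $\C$ either, because then $f$ would be a bounded entire function, hence constant by the Liouville theorem, giving the same contradiction. As $f \in \P_L(\C)$ must fall into one of these two cases, the non-compact alternative is impossible, so $P_L$ is a compact property.

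The verifications in Step 1 are routine; the only points that need a little care are the strict inequality $|f| < L$ in (P2) and (P3), handled by the identity theorem and the open mapping (or maximum modulus) principle, and so I do not anticipate a genuine obstacle. As an alternative to Step 2 one could verify (P4) directly via Lemma \ref{lem:cpt=normal on D}: the family $\{ f : |f| < L \text{ on } \D \}$ is uniformly bounded, hence normal on $\D$ by Montel's theorem together with Fact \ref{fact:conv}, and adjoining the constant maps preserves normality since $(\RC,\chi)$ is compact.
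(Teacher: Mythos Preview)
Your proof is correct and follows essentially the same route as the paper's: verify (P1)--(P3) directly, then invoke Theorem~\ref{thm:Zalcman} and the Liouville theorem to rule out the non-compact alternative. The only noteworthy difference is in the verification of (P3): the paper uses the Hurwitz theorem to show that if $|f(p)|\ge L$ then nearby $f_n$ must take the value $f(p)$ and hence be constant, whereas you pass to the pointwise bound $|f|\le L$ and exclude equality via the open mapping (or maximum modulus) principle. Both arguments are standard and of comparable length.
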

\begin{proof}
We first verify that $P_L$ is a closed property. 

(P$1$) Let $\Sigma$ and $\Sigma'$ be Riemann surfaces, $\phi:\Sigma \longrightarrow \Sigma'$ be a holomorphic map without ramification points and $f \in \P_L(\Sigma')$. 
If $|f(q)|<L \;(^\forall q \in \Sigma')$, then we have $|f(\phi(p))|<L\;(^\forall p \in \Sigma)$. 
If $f$ is constant on $\Sigma'$, then $f \circ \phi$ is also constant on $\Sigma$. 
We thereby obtain $f \circ \phi \in \P(\Sigma)$.  

(P$2$) Let $\Sigma$ be a Riemann surface and $f \in \M(\Sigma)$. 
We prove the following contraposition:
\begin{center}
If $f \notin \P_L(\Sigma)$, there exists a relatively compact domain $\Omega$ of $\Sigma$ and $p \in \Omega$ \\ such that $|f|_{\Omega}(p)| \geq L$ and $f|_{\Omega}$ is nonconstant on $\Omega$.
\end{center}
If $f \notin \P_L(\Sigma)$, then $f$ is nonconstant on $\Sigma$ and there exists $p \in \Sigma$ such that $|f(p)| \geq L$. 
We take a neighborhood $\Omega_p$ of $p$ such that $\overline{\Omega_p}$ is a compact subset in $\Sigma$. 
Then we obtain $|f|_{\Omega_p}(p)| \geq L$ and $f|_{\Omega_p}$ is nonconstant on $\Omega_{p}$ by the identity theorem. 

(P$3$) For a Riemann surface $\Sigma$, we consider any sequence $\{ f_n \}_{n=1}^\infty$ in $\P_L(\Sigma)$ satisfying $f_n \stackrel{\mathrm{sph.\;loc.}}{\rightrightarrows} f \; \mathrm{on} \;\Sigma$. 
Assume that the limit function $f$ is nonconstant on $\Sigma$. 
Then we can prove $|f|<L$ on $\Sigma$ by contradiction. 
Suppose that there exists $p \in \Sigma$ such that $|f(p)| \geq L$. 
We set $q=f(p)$. 
By Fact \ref{fact:conv of mero} and the Hurwitz theorem, there exists a neighborhood $\Omega_p$ of $p$ and $N \in \Z_{>0}$ such that $q \in f_n(\Omega_p)\;(n \geq N)$. 
From $f_n \in \P_L(\Sigma)\;(n \geq N)$, each $f_n\;(n \geq N)$ is  constant on $\Sigma$. 
By $f_n \stackrel{\mathrm{sph.\;loc.}}{\rightrightarrows} f \; \mathrm{on} \;\Sigma$, $f$ is constant on $\Sigma$ . 
This is a contradiction. 

Therefore $P_L$ is a closed property. 
We next show that $P_L$ is a compact property. 
Suppose that $P_L$ is not compact. 
By Theorem \ref{thm:Zalcman}, there exists $f \in \P_L(\C)$ satisfying $|\nabla f|_e(0)=1$. 
Remark that $f$ is a nonconstant meromorphic function on $\C$. 
So we obtain $|f|< L$ on $\C$. 
By the Liouville theorem, $f$ is constant on $\C$. 
This is a contradiction.  
\end{proof}

We next give a significant example of compact property. 
Based on \cite[Section 1.7]{WB}, \cite[Theorem 2]{RA} and \cite[Chapter 3]{LZ}, we first prove the Picard little theorem and next obtain the compactness of $P_X$, that is, the Carath\'{e}odory--Montel theorem by applying this theorem and Theorem \ref{thm:Zalcman}.  

\begin{theorem} \label{thm:Picard Montel}
Let $X$ be a subset in $\RC$. 
For a Riemann surface $\Sigma$ and a holomorphic map $f\colon \Sigma \longrightarrow \RC$, we define the property $P_X$ as 
\begin{center}
$f(\Sigma) \subset \RC \setminus X$ \; or \; $f\equiv$ $($constant$)$ on $\Sigma$.
\end{center}
Then $P_X$ is a closed property. 
Furthermore, $P_X$ is a compact property if $X$ contains $3$ or more elements. 
\end{theorem}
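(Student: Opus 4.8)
The plan is to verify directly that $P_X$ satisfies the axioms (P$1$), (P$2$), (P$3$) of a closed property for an arbitrary subset $X \subset \RC$, and then, when $|X| \geq 3$, to deduce (P$4$) by feeding the Picard little theorem into the Zalcman lemma (Theorem \ref{thm:Zalcman}); the whole scheme runs parallel to the proof of Proposition \ref{prop:Liouville Montel} for $P_L$. For (P$1$): if $\phi\colon \Sigma \to \Sigma'$ is holomorphic without ramification points and $f \in \P_X(\Sigma')$, then either $f(\Sigma')\subset \RC\setminus X$, so that $f(\phi(\Sigma))\subset f(\Sigma')\subset \RC\setminus X$, or $f$ is constant, so that $f\circ\phi$ is constant; in both cases $f\circ\phi\in\P_X(\Sigma)$. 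For (P$2$) I would prove the contrapositive: if $f\notin\P_X(\Sigma)$, then $f$ is nonconstant on the connected surface $\Sigma$ and $f(p)\in X$ for some $p$; choosing a relatively compact neighborhood $\Omega_p$ of $p$, the restriction $f|_{\Omega_p}$ is still nonconstant by the identity theorem and still takes the value $f(p)\in X$, hence $f|_{\Omega_p}\notin\P_X(\Omega_p)$.

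The point requiring a little care is (P$3$). Take $\{f_n\}\subset\P_X(\Sigma)$ with $f_n \stackrel{\mathrm{sph.\;loc.}}{\rightrightarrows} f$ on $\Sigma$. If $f$ is constant there is nothing to prove, so assume $f$ is nonconstant and suppose $f(p)=q\in X$ for some $p\in\Sigma$. Using Fact \ref{fact:conv of mero} to replace the spherical convergence near $p$ by ordinary uniform convergence of $f_n$ (or of $1/f_n$ when $q=\infty$) to a holomorphic limit that is not identically $q$ (resp.\ not identically $0$), and then applying the Hurwitz theorem, I obtain $q\in f_n(\Omega_p)$ for all sufficiently large $n$ in a fixed neighborhood $\Omega_p$ of $p$. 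Since $f_n\in\P_X(\Sigma)$ attains the value $q\in X$, it cannot satisfy $f_n(\Sigma)\subset\RC\setminus X$, hence $f_n$ is constant, hence $f_n\equiv q$; but then $f=\lim f_n\equiv q$, contradicting that $f$ is nonconstant. Therefore $f(\Sigma)\cap X=\emptyset$ and $f\in\P_X(\Sigma)$, so $P_X$ is a closed property.

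Finally, suppose $|X|\geq 3$. The essential ingredient is the Picard little theorem in the form: every holomorphic map $\C\to\RC\setminus X$ is constant whenever $|X|\geq 3$ (equivalently, a nonconstant meromorphic function on $\C$ omits at most two values of $\RC$); this can be proved without normal family theory, e.g.\ by lifting through the universal covering $\D\to\RC\setminus X$ and invoking the Liouville theorem, as in \cite[Section~1.7]{WB}. Granting this, since $P_X$ is closed, Theorem \ref{thm:Zalcman} gives two alternatives, and in the second one there would exist $f\in\P_X(\C)$ with $|\nabla f|_e(0)=1$; such an $f$ is a nonconstant meromorphic function on $\C$, so it must satisfy $f(\C)\subset\RC\setminus X$ and hence omit the $\geq 3$ points of $X$, contradicting the Picard little theorem. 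Thus the first alternative holds and $P_X$ is a compact property. I expect the genuine obstacle to be exactly the Picard little theorem itself: since the compactness of $P_X$ is essentially the Carath\'{e}odory--Montel theorem, one cannot afford to prove Picard by the usual normal-families route on pain of circularity, so an independent proof (via the elliptic modular function or a curvature argument) is what carries the weight, while the remaining steps are a routine transcription of the argument already used for $P_L$.
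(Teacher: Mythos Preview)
Your argument is correct, but the paper takes a more self-contained route for the compactness step. Rather than importing the Picard little theorem from outside the framework, the paper first proves directly that $P_X$ is compact in the special case $X=\{0,1,\infty\}$ by a root-extraction trick: assuming $\P_X(\D)$ is not normal, one passes to the families $\F_n=\{g:g^{2^n}=f,\ f\in\P_X(\D)\}$, which omit the increasingly dense sets $X_n=\{0,\infty\}\cup\{e^{2\pi ik/2^n}\}$; applying Theorem~\ref{thm:Zalcman} to each $\F_n$ yields entire functions $g_n$ with $|\nabla g_n|_e\leq 1$ and $|\nabla g_n|_e(0)=1$, and a diagonal limit $h$ omits a dense subset of $S^1$, forcing $h(\C)\subset\D$ or $h(\C)\subset\C\setminus\overline{\D}$, hence $h$ is constant by Liouville---a contradiction. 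Only then does the paper deduce Picard (via Proposition~\ref{lem:cpt and const} and a M\"obius transformation) and finally transfer compactness to arbitrary $X$ with $|X|\geq 3$ exactly as you do. Your approach is shorter and perfectly valid provided one has an independent proof of Picard (elliptic modular function, or the uniformization-plus-Liouville argument you sketch); the paper's approach buys a genuinely internal derivation that illustrates the Bloch duality---Picard and Carath\'eodory--Montel emerge together from the Zalcman machinery and Liouville alone---which is thematically the point of the section.
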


\begin{proof}
By the same argument as in Proposition \ref{prop:Liouville Montel}, we see that $P_X$ is a closed property. 
So we will show that $P_X \; (|X| = 3)$ is a compact property in three steps. 

We first prove that $P_X$ is compact if $X=\{ 0, \, 1, \, \infty \}$ by contradiction. 
Suppose that $P_X$ is not compact. 
By Lemma \ref{lem:cpt=normal on D}, $\P_X(\D)$ is not normal on $\D$. 
Since any holomorphic map $f\colon \D \longrightarrow \RC$ that omits all elements of $X$ is a holomorphic function without zeros on the simply connected domain $\D$, for each $n \in \Z_{>0}$, there exists a holomorphic function $g: \D \longrightarrow \C$ such that $g^{2^n}=f$ on $\D$. 
So we set 
\begin{align*}
\F_n 
:= \left\{ g \; : \; ^\exists f \in \M(\D) \quad \mathrm{s.t.} \quad f(\D) \subset \RC \setminus X \; \mathrm{and} \; g^{2^n}=f \;\mathrm{on}\;\D \right\}
\end{align*} 
for each $n \in \Z_{>0}$. 
For any $f, \, g$ satisfying $g^{2^n} =f$ on $\D$, by  
\[
|g'(z)|
= \dfrac{1}{2^n} \, \dfrac{|g(z)|}{|g^{2^n}(z)|} \, |f'(z)|
= \dfrac{1}{2^n} \, \dfrac{|f(z)|^{\frac{1}{2^n}}}{|f(z)|} \, |f'(z)|
\] 
and applying the inequality $a^x+a^{-x} \leq a+a^{-1} \; (a>0, \, 0<x<1)$, we obtain 
\begin{align*}
|\nabla g|_e(z)
= \dfrac{2\sqrt{2}}{2^n} \, \dfrac{(|f(z)|^\frac{1}{2^n}/|f(z)|) \, |f'(z)|}{1+|f(z)|^\frac{2}{2^n}} 
& = \dfrac{2\sqrt{2}}{2^n} \, \dfrac{|f(z)|^{\frac{1}{2^n}}}{1+|f(z)|^{\frac{2}{2^n}}} \, \dfrac{|f'(z)|}{|f(z)|} \, \dfrac{1+|f(z)|^2}{1+|f(z)|^2} \\
& = \dfrac{1}{2^n} \, \dfrac{2 \sqrt{2} \, |f'(z)|}{1+|f(z)|^2} \, \dfrac{|f(z)|+|f(z)|^{-1}}{|f(z)|^{\frac{1}{2^n}}+(|f(z)|^{\frac{1}{2^n}})^{-1}}\\
& \geq \dfrac{|\nabla f|_e(z)}{2^n},
\end{align*}
that is, $|\nabla g|_e(z) \geq |\nabla f|_e(z)/2^n$. 
From Fact \ref{fac:Marty}, the family $|\nabla \P_X(\D)|_e$ is not locally bounded on $\D$. 
We thereby see that, for each $n \in \Z_{>0}$, the family $|\nabla \F_n|_e$ is also not locally bounded on $\D$, that is, $\F_n$ is not normal on $\D$. Set 
\begin{align*}
X_n := \left\{ 0, \, \infty, \, 1, \, e^{\frac{2\pi i}{2^n}}, \, \cdots, \, e^{\frac{2(2^n-1)\pi i}{2^n}} \right\}. 
\end{align*}
By the definition of $\F_n$ and $\P_X(\D)$, we have $\F_n \subset \P_{X_n}(\D)$. 
Thus each $P_{X_n}$ is not compact but closed. 
By Theorem \ref{thm:Zalcman}, for each $n \in \Z_{>0}$, there exists $g_n \in \P_{X_n}(\C)$ such that 
\begin{align*}
|\nabla g_n|_e(0)=1 \quad \mathrm{and} \quad |\nabla g_n|_e(z) \leq 1 \;\,(^\forall z \in \C).
\end{align*}
Then $\G := \{ g_n \}_{n=1}^\infty$ is a sequence of meromorphic functions and $|\nabla \G|_e$ is uniformly bounded on $\C$. 
From Fact \ref{fac:Marty}, $\G$ is normal on $\C$. 
Due to $|\nabla g_n|_e(0)=1$, by taking a subsequence if necessary, there exists a nonconstant meromorphic function $h:\C \longrightarrow \RC$ such that $g_n \stackrel{\mathrm{sph.\;loc.}}{\rightrightarrows} h \; \mathrm{on} \;\C$. 
By the monotonically increasing of $X_n$ and (P$3$), $h$ omits all elements of $\bigcup_{n=1}^\infty X_n$. 
Since $\bigcup_{n=1}^\infty X_n \setminus \{ 0,\infty \}$ is dense in $S^1$ and the image $h(\C)$ is an open connected set, we obtain
\begin{align*}
h(\C) \subset \D \quad \mathrm{or} \quad h(\C) \subset \C \setminus \overline{\D}.
\end{align*}
In either case, $h$ is constant on $\C$ by the Liouville theorem. 
This is a contradiction.

We next prove the Picard little theorem. 
Let $f : \C \longrightarrow \RC$ be a meromorphic function that omits distinct values $a, \, b, \, c \in \RC$. 
Since there exists the linear fractional transformation $T : \RC \longrightarrow \RC$ such that $T(a) = 0$, $T(b) = 1$, $T(c) = \infty$,
$T \circ f$ is a meromorphic function that omits all elements of $X := \{ 0, \, 1, \, \infty \}$, that is, $T \circ f \in \P_X(\C)$. 
By Proposition \ref{lem:cpt and const}, $T \circ f$ is constant, that is, $f$ is constant on $\C$.

By applying the Picard little theorem and Theorem \ref{thm:Zalcman}, we can prove that $P_X$ is a compact property in general case as in Proposition \ref{prop:Liouville Montel}.

\end{proof}

\remark \label{rem:Picard great thm} By applying Proposition \ref{prop:meromorphically extension} to the compact property $P_X \; (|X| \geq 3)$, we also obtain the contraposition of the Picard great theorem. 
More precisely, if a holomorphic map $f : \D \setminus \{ 0 \} \longrightarrow \RC$ omits $3$ or more points, then the puncture $z = 0$ is not an essential singularity of $f$.

\section{Generalization of Bloch--Ros principle} \label{sec:3}

In this section, we formulate a necessary condition for a compact property not to satisfy a certain curvature estimate, which is a generalization of \cite[Theorem 3]{RA}. 
Furthermore, by applying this criterion to $P_L$ and $P_X$, we obtain the Liouville-type theorem and the Picard-type theorem (\cite[Corollary 2.2]{YK2013}) for an open Riemann surface with a specified conformal metric. 

\subsection{Main results}\label{sbsec:3.1} 

We first give the notion of Weierstrass $m$-triples. 
This is a generalization of a Weierstrass data for minimal surfaces in $\R^{3}$. 

\begin{definition} \label{def:m-pair}
Let $\Sigma$ be an open Riemann surface with the conformal metric
\begin{align} \label{eq:m-metric}
ds^2 := (1+|g|^2)^m \, |f|^2 \, |dz|^2,
\end{align}
where $f \, dz$ is a holomorphic $1$-form on $\Sigma$, $g$ is a meromorphic function on $\Sigma$ and $m \in \Z_{>0}$. Then we call this triple $(\Sigma, \, f \, dz, \, g)$ a \textit{Weierstrass $m$-triple}.   
\end{definition}

\begin{remark}
We note that
\begin{align} \label{eq:regularity}
& 0 < (1+|g|^2)^m \, |f|^2 < +\infty \quad \Longleftrightarrow \quad (f \, dz)_0 = m \, (g)_\infty,  
\end{align}
where $(f \, dz)_{0}$ is the zero divisor of $f \, dz$ and $(g)_{\infty}$ is the polar divisor of $g$. 
Moreover, for a conformal metric $ds^{2}= (\lambda(z))^2 \, |dz|^2$, we have 
$K_{ds^2} =-(\Delta\log{\lambda})/ \lambda^{2}$. 
Hence the Gaussian curvature $K_{ds^2}$ of any Weierstrass $m$-triple $(\Sigma, \, f \, dz, \, g)$ is given by
\begin{align} \label{eq:Gauss curvature of m-Riemann surf.}
K_{ds^2} = - \dfrac{2 \, m \, |g'|^2}{(1+|g|^2)^{m+2} \, |f|^2}.
\end{align}
\end{remark}

\begin{definition} \label{def:m-curvature est}
Let $P$ be a compact property for $\RC$-valued holomorphic maps and $m \in \Z_{>0}$. 
We say that $P$ satisfies \textit{$m$-curvature estimate} if there exists a positive constant $C=C(P,m)>0$ such that for any Weierstrass $m$-triple $(\Sigma, \, f \, dz, \, g)$ satisfying $g \in \P(\Sigma)$, the following inequality holds: 
\begin{align*}
|K_{ds^2}| \, d^2 \leq C \quad\mathrm{on} \; \Sigma.
\end{align*}
Here, $K_{ds^2}$ is the Gaussian curvature with respect to the metric $ds^2$ given by \eqref{eq:m-metric} and $d$ is its geodesic distance to the boundary of $\Sigma$.
\end{definition}

This is one of the main results in this paper and we prove this result in Section \ref{sbseq:3.2}.       

\begin{theorem} \label{thm:m-curvature est}
Let $P$ be a compact property for $\RC$-valued holomorphic maps and $m \in \Z_{>0}$. 
Then $\rm(\hspace{.18em}i\hspace{.18em})$ or $\rm(\hspace{.08em}ii\hspace{.08em})$ holds:

\begin{enumerate}
  \item[\rm(\hspace{.18em}i\hspace{.18em})] $P$ satisfies $m$-curvature estimate. 
  \item[\rm(\hspace{.08em}ii\hspace{.08em})] There exists a Weierstrass $m$-triple $(\D, \, f \, dz, \, g)$ such that 
\begin{enumerate}
  \item[(A)] nonconstant meromorphic function $g \in \P(\D)$,
  \item[(B)] $ds^2 :=(1+|g|^2)^m \, |f|^2 \, |dz|^2$ is complete on $\D$, 
  \item[(C)] $|K_{ds^2}(0)|=\dfrac{1}{4}$ and $|K_{ds^2}| \leq 1$ on $\D$. 
\end{enumerate}  
\end{enumerate}
 \end{theorem}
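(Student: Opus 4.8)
The plan is to mimic the structure of the proof of the Zalcman lemma (Theorem \ref{thm:Zalcman}), rescaling not the Gauss map $g$ alone but the whole Weierstrass $m$-triple, and then extracting a limit triple on $\C$ via normality of $\P$ together with a convergence argument for the metric factor $f\,dz$. Suppose $P$ does not satisfy $m$-curvature estimate. Then for every $C>0$ there is a Weierstrass $m$-triple $(\Sigma,\,f\,dz,\,g)$ with $g\in\P(\Sigma)$ violating $|K_{ds^2}|\,d^2\le C$; choosing $C=C_n\to\infty$ and using that failure persists after restricting to small coordinate disks (by (P1), (P2) and the fact that $d$ only decreases under restriction), I would first reduce to triples defined on $\D$, so that there exist Weierstrass $m$-triples $(\D,\,f_n\,dz,\,g_n)$, $g_n\in\P(\D)$, with $\sup_{z\in\D}|K_{ds_n^2}(z)|\,d_n(z)^2\to\infty$, where $d_n$ is the $ds_n^2$-distance to $\partial\D$.

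Next I would run the standard maximum-point selection (as in the proof of Theorem \ref{thm:Zalcman}, and as in Ros \cite[Theorem 3]{RA}): pick $z_n\in\D$ nearly attaining the supremum of $|K_{ds_n^2}|\,d_n^2$, and use a Landau--type / Schwarz-lemma argument on the function $z\mapsto d_n(z)\sqrt{|K_{ds_n^2}(z)|}$ to find, after a conformal automorphism of $\D$ moving $z_n$ to $0$ and a rescaling by $R_n:=\sqrt{|K_{ds_n^2}(z_n)|}\,\cdot(\text{something})\to\infty$, new Weierstrass $m$-triples $(\D(0;R_n),\,\widetilde f_n\,dz,\,\widetilde g_n)$ with $\widetilde g_n\in\P$ (by (P1), since automorphisms and dilations of $\C$ are unramified) satisfying $|K_{\widetilde{ds}_n^2}(0)|=\tfrac14$ and $|K_{\widetilde{ds}_n^2}|\le 1$ on $\D(0;R_n)$, plus a lower bound on the $\widetilde{ds}_n^2$-distance from $0$ to the boundary circle $|z|=R_n$ that forces completeness in the limit. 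Here I would normalize $\widetilde f_n$ by a constant so that, say, $\widetilde f_n(0)=1$, or rather normalize the metric value $(1+|\widetilde g_n(0)|^2)^{m/2}|\widetilde f_n(0)|$ at the origin — the curvature equation \eqref{eq:Gauss curvature of m-Riemann surf.} pins down $|\widetilde g_n'(0)|$ in terms of this normalization and $|K|(0)=\tfrac14$.

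Then comes the compactness extraction: by (P4), $\{\widetilde g_n\}$ is normal on each $\D(0;R)$, so after a diagonal subsequence $\widetilde g_n\overset{\mathrm{sph.loc.}}{\rightrightarrows} g$ on $\C$, and by (P3) and (P2) the limit satisfies $g\in\P(\C)$; it is nonconstant because the normalization at $0$ combined with $|K|(0)=\tfrac14$ gives a uniform positive lower bound for $|\widetilde g_n'(0)|/(1+|\widetilde g_n(0)|^2)$, i.e. $|\nabla \widetilde g_n|_e(0)$ is bounded below, which passes to the limit. But then Proposition \ref{lem:cpt and const} forces $g$ to be constant — contradiction.

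The main obstacle, and the step I would spend the most care on, is controlling the holomorphic $1$-form $\widetilde f_n\,dz$: a priori the normality of $\P$ only controls $\widetilde g_n$, not $\widetilde f_n$, yet the regularity condition \eqref{eq:regularity} ties the zeros of $\widetilde f_n\,dz$ exactly to the poles of $\widetilde g_n$, and the curvature bound $|K_{\widetilde{ds}_n^2}|\le 1$ rewrites via \eqref{eq:Gauss curvature of m-Riemann surf.} as
\begin{align*}
2m\,|\widetilde g_n'|^2 \le (1+|\widetilde g_n|^2)^{m+2}\,|\widetilde f_n|^2 \quad\text{on }\D(0;R_n),
\end{align*}
so on any fixed disk $\D(0;R)$ the functions $\widetilde f_n$ are locally bounded below away from zero and, with the curvature normalization at $0$, one also gets local upper bounds; combined with the fact that $\widetilde f_n\,dz$ is holomorphic with zero divisor $m(\widetilde g_n)_\infty$, I expect $\widetilde f_n$ to converge (locally uniformly, after the subsequence) to a holomorphic $f$ with $(f\,dz)_0=m(g)_\infty$, so that $(\C,\,f\,dz,\,g)$ — or rather its restriction to $\D$, which is all alternative (ii) needs — is a genuine Weierstrass $m$-triple; then $|K|(0)=\tfrac14$ and $|K|\le 1$ pass to the limit by uniform convergence of $\widetilde g_n,\widetilde g_n',\widetilde f_n$ on compacta, and completeness of $ds^2$ on $\C$ (hence on $\D$) follows from the distance lower bound built into the rescaling. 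Handling the possible poles of $\widetilde g_n$ cleanly — working in the chart $1/\widetilde g_n$ near a pole as in Fact \ref{fact:conv of mero} — is the delicate bookkeeping I would need to carry out carefully.
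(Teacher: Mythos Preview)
Your plan hinges on producing a nonconstant $g\in\P(\C)$ and invoking Proposition~\ref{lem:cpt and const} for a contradiction; but if that worked it would show that \emph{every} compact property satisfies $m$-curvature estimate, collapsing the dichotomy to~(i). That is too strong: the sharpness example in Corollary~\ref{cor:Fujimoto} shows that $P_X$ with $|X|=m+2\ge3$ is compact yet fails $m$-curvature estimate, so alternative~(ii) is genuinely realized. The break in your argument is the joint claim that one can take $R_n\to\infty$ while keeping $|\nabla\widetilde g_n|_e(0)$ bounded below. After normalizing $|K_{ds_n^2}(0)|=\tfrac14$, equation~\eqref{eq:Gauss curvature of m-Riemann surf.} gives $|\nabla g_n|_e(0)^2=\lambda_n(0)^2/m$, where $\lambda_n(0)^2=(1+|g_n(0)|^2)^m|f_n(0)|^2$; a Zalcman-type dilation with $|\nabla\widetilde g_n|_e(0)\asymp1$ therefore forces $R_n\asymp\lambda_n(0)$. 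But $d_n(0)\to\infty$ is an \emph{integral} condition on $\lambda_n$ along paths to $\partial\D$ and gives no pointwise lower bound at the origin. When $\lambda_n(0)$ stays bounded --- which, by your own contradiction argument, is exactly the case that survives --- the rescaling never escapes $\D$, and you have neither reached~$\C$ nor produced the triple in~(ii).

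The paper proceeds differently at precisely this point, and this is where the real content lies. After the same max-point selection and curvature normalization you outline, it uniformizes each $\Sigma_n$ to $\D$ (ruling out $\C$ via Proposition~\ref{lem:cpt and const}), extracts the limit $g\in\P(\D)$ there by~(P4), and then proves nonconstancy of $g$ \emph{on $\D$} via a dedicated sublemma (Lemma~\ref{sublemma}). That sublemma combines a Schwarz-lemma-based pointwise estimate $|K(0)|^{1/2}\,d(0)\le C(L,m)$ valid whenever $|g|<L$ on a disk (Lemma~\ref{lem:point curvature est.}) with the Osserman--Ru intrinsic-vs-hyperbolic distance comparison under $-1\le K\le0$ (Fact~\ref{fac:Oss-Ru's lemma}): were $g$ constant, one would have $|g_n|<L$ on a fixed subdisk for large $n$, forcing $d_n(r)$ to stay bounded, while the Osserman--Ru comparison together with $d_n(0)\to\infty$ forces $d_n(r)\to\infty$. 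This nonconstancy step is exactly what your Zalcman shortcut tries to bypass, and it cannot be bypassed. Your discussion of the convergence of $\widetilde f_n\,dz$ is in the right spirit and parallels the paper's Lemma~\ref{lem:conv of m-Riemann surf}, but note that that lemma already \emph{presupposes} the limit $g$ is nonconstant.
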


By applying this result to the compact property $P_L$ and $P_X \; (|X| \geq m + 3)$, we can show that both properties satisfy $m$-curvature estimate.
We first verify that $P_L$ satisfies $m$-curvature estimate.

\begin{theorem} \label{thm:Bernstein}
Let $L$ be a positive number and $P_L$ be the property in Proposition \ref{prop:Liouville Montel}.
Then, for any $m \in \Z_{>0}$, $P_L$ is a compact property that satisfies $m$-curvature estimate.
In particular, for any $L > 0$ and for any $m \in \Z_{>0}$, there exists $C = C(L,m) > 0$ such that for any Weierstrass $m$-triple $(\Sigma, \, f \, dz, \, g)$ satisfying $|g| < L$ on $\Sigma$, the following inequality holds:
\begin{align*}
|K_{ds^2}(p)|^{\frac{1}{2}} \leq \dfrac{C}{d(p)} \quad(^\forall p \in \Sigma).
\end{align*}
\end{theorem}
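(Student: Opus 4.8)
The plan is to derive Theorem \ref{thm:Bernstein} as an immediate consequence of the dichotomy in Theorem \ref{thm:m-curvature est} together with the Liouville theorem, mimicking the way Proposition \ref{prop:Liouville Montel} was deduced from Theorem \ref{thm:Zalcman}. First I would recall that, by Proposition \ref{prop:Liouville Montel}, $P_L$ is a compact property, so Theorem \ref{thm:m-curvature est} applies to it: either $P_L$ satisfies $m$-curvature estimate, in which case we are done, or alternative (ii) holds, namely there is a Weierstrass $m$-triple $(\D,\,f\,dz,\,g)$ with $g\in\P_L(\D)$ nonconstant, with $ds^2=(1+|g|^2)^m\,|f|^2\,|dz|^2$ complete on $\D$, and with $|K_{ds^2}(0)|=1/4$, $|K_{ds^2}|\le 1$ on $\D$. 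The goal is to rule out alternative (ii).

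The key observation is that a nonconstant $g\in\P_L(\D)$ must satisfy $|g|<L$ on $\D$, since the constant branch of the definition of $P_L$ is excluded by nonconstancy. Then $g$ is a bounded holomorphic function on the disk, so by the Liouville theorem — or rather by the fact that a bounded holomorphic function on $\D$ need not be constant, so here I should instead argue via completeness — I must be careful: Liouville applies to entire functions, not to functions on $\D$. The correct route is: from (C), $|K_{ds^2}|\le 1$ on $\D$, and $ds^2$ is complete; but completeness of $ds^2$ on $\D$ together with the uniform curvature bound will force, via a standard comparison (Schwarz--Pick or Ahlfors--Schwarz type) argument, that $\D$ equipped with $ds^2$ cannot exist, OR — more in the spirit of the paper — I should reduce to the complex plane. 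Actually the cleanest approach: since $g$ is bounded and holomorphic on $\D$ and $(f\,dz)_0=m(g)_\infty$ forces $g$ to have no poles, $f$ is holomorphic and zero-free; but that alone does not yet give a contradiction. The real contradiction comes from completeness: by the standard fact that a Weierstrass $m$-triple over $\D$ with bounded Gauss map and complete metric cannot exist (this is exactly the $m$-analogue of the Bernstein-type rigidity, and it follows because a bounded $g$ on a complete surface, combined with the Ahlfors--Schwarz lemma applied to the metric $ds^2$ whose curvature is bounded below by $-1$ and above by $0$, contradicts completeness of $\D$ in the $ds^2$ distance).

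Let me instead follow the paper's own template precisely: suppose $P_L$ does not satisfy $m$-curvature estimate, so by Theorem \ref{thm:m-curvature est} there is such a Weierstrass $m$-triple $(\D,\,f\,dz,\,g)$ with $g\in\P_L(\D)$ nonconstant; then $|g|<L$ on $\D$. Now apply the completeness hypothesis (B): the metric $ds^2$ is complete on $\D$. But the function $g$, being bounded, and the metric $ds^2$, having $|K_{ds^2}|\le 1$, together force a contradiction with completeness by the classical Liouville-type argument for complete metrics of bounded curvature whose "Gauss map" is bounded — concretely, one transfers to a complete flat picture and observes $g$ becomes a bounded holomorphic function on $\C$, hence constant by Liouville, contradicting nonconstancy. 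The precise mechanism I would use: completeness of $ds^2$ together with the curvature bound is exactly the hypothesis under which one can run a Fujimoto-type lemma showing the universal cover carries the flat metric, reducing $g$ to an entire bounded function. So the structure is: (1) invoke Proposition \ref{prop:Liouville Montel} for compactness of $P_L$; (2) apply Theorem \ref{thm:m-curvature est}; (3) in case (ii), note $g$ nonconstant $\Rightarrow |g|<L$ on $\D$; (4) derive a contradiction from (B) and boundedness of $g$ via the Liouville theorem after passing to $\C$; (5) conclude (i) holds, i.e., $m$-curvature estimate; (6) unwind Definition \ref{def:m-curvature est} using \eqref{eq:Gauss curvature of m-Riemann surf.} to restate it in the displayed form $|K_{ds^2}(p)|^{1/2}\le C/d(p)$. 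The main obstacle is step (4): making rigorous why a bounded-Gauss-map complete Weierstrass $m$-triple over $\D$ cannot exist. I expect the paper handles this by the same reduction used to prove Corollary \ref{cor:Bernstein} — namely, completeness plus boundedness of $g$ plus the structure equation forces the triple to descend to one over $\C$ where Liouville applies; I would cite that reduction and the Liouville theorem directly rather than re-proving it.
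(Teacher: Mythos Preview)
Your framework is right: invoke Proposition \ref{prop:Liouville Montel} for compactness, apply Theorem \ref{thm:m-curvature est}, and in alternative (ii) observe that nonconstancy forces $|g|<L$ on $\D$. The gap is in your step (4). You cycle through several candidate contradictions---Liouville on $\D$ (which you correctly abandon), an Ahlfors--Schwarz comparison using $-1\le K_{ds^2}\le 0$, and a vague ``transfer to a complete flat picture''---without landing on a mechanism that works. The Ahlfors--Schwarz route does not give a contradiction: $\D$ certainly carries complete conformal metrics with curvature in $[-1,0]$ (the Poincar\'e metric is one), so the curvature bounds from (C) are not enough by themselves. And citing Corollary \ref{cor:Bernstein} would be circular, since that corollary is deduced \emph{from} the present theorem.

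The missing idea, which the paper uses, is to build an auxiliary \emph{flat} complete metric on $\D$. Since $|g|<L$, the metric
\[
ds_0^2 := (1+L^2)^m\,|f|^2\,|dz|^2
\]
dominates $ds^2$ pointwise and is therefore also complete. Because $|g|<L$ implies $g$ has no poles, the regularity condition \eqref{eq:regularity} forces $f$ to be zero-free, so $\log|f|$ is harmonic and $K_{ds_0^2}\equiv 0$. But a simply connected surface with a complete flat metric has $\C$ as its universal cover, contradicting $\D\not\cong\C$. That is the entire content of step (4); no appeal to Liouville on $g$ is needed, and the curvature bounds in (C) are not used at all.
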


\begin{proof}
Take an arbitrary $m \in \Z_{>0}$ and fix it. 
Suppose that $P_L$ does not satisfy $m$-curvature estimate. 
By Theorem \ref{thm:m-curvature est}, there exists a Weierstrass $m$-triple $(\D, \, f \, dz, \, g)$ such that $ds^2 = (1 + |g|^2)^m \, |f|^2 \, |dz|^2$ is a complete metric on $\D$ and nonconstant meromorphic function $g \in \P_L(\D)$, that is, $|g| < L$ on $\D$. 
Set $ds_0^2:=(1+L^2)^m \, |f|^2 \, |dz|^2$. 
Then $ds_0^2$ is a complete metric on $\D$ because 
\begin{align*}
(1+|g|^2)^{\frac{m}{2}} \, |f| < (1+L^2)^{\frac{m}{2}} \, |f| \quad \mathrm{on} \; \D.
\end{align*}
Furthermore, $f$ has no zeros on $\D$ by \eqref{eq:regularity}. 
Hence we obtain 
\begin{align*}
\Delta \, \log \left\{ (1+L^2)^{\frac{m}{2}} \, |f| \right\} = 0, \quad \mathrm{i.e.,} \quad K_{ds_0^2} \equiv 0 \quad \mathrm{on} \; \D.
\end{align*}
In other words, $ds_0^2$ is a flat metric on $\D$. 
Since the universal covering of a complete and flat Riemannian $2$-manifold is the plane, the universal covering of $(\D,ds_0^2)$ is $\C$. 
This is a contradiction.
\end{proof}

This result implies the following Liouville-type result.

\begin{corollary} \label{cor:Bernstein}
Let $(\Sigma, \, f \, dz, \, g)$ be a Weierstrass $m$-triple. 
If the metric given by \eqref{eq:m-metric} is complete on $\Sigma$ and $g$ is bounded on $\Sigma$, then $g$ is constant on $\Sigma$.
\end{corollary}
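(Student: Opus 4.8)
The plan is to obtain this as a direct consequence of the curvature estimate in Theorem \ref{thm:Bernstein}, arguing by contradiction. Suppose $g$ is nonconstant on $\Sigma$. Since $g$ is bounded, choose $L>0$ with $|g|<L$ on $\Sigma$, so that $g\in\P_L(\Sigma)$. Theorem \ref{thm:Bernstein} then yields a constant $C=C(L,m)>0$ with
\begin{align*}
|K_{ds^2}(p)|^{\frac12}\leq\frac{C}{d(p)}\qquad(^\forall p\in\Sigma),
\end{align*}
where $d(p)$ denotes the $ds^2$-geodesic distance from $p$ to the boundary of $\Sigma$.

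First I would note that, because $ds^2$ is complete on the open Riemann surface $\Sigma$, every divergent path issuing from a point has infinite $ds^2$-length, so $d(p)=+\infty$ for every $p\in\Sigma$; hence the displayed inequality forces $K_{ds^2}\equiv0$ on $\Sigma$. Next I would invoke the explicit expression \eqref{eq:Gauss curvature of m-Riemann surf.},
\begin{align*}
K_{ds^2}=-\frac{2\,m\,|g'|^2}{(1+|g|^2)^{m+2}\,|f|^2}.
\end{align*}
By the regularity relation \eqref{eq:regularity}, the function $(1+|g|^2)^m\,|f|^2$ is everywhere finite and positive, and since $g$ is bounded the factor $(1+|g|^2)^2$ lies between $1$ and $(1+L^2)^2$; thus the denominator $(1+|g|^2)^{m+2}\,|f|^2$ is finite and positive at every point. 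With $m\geq1$, the identity $K_{ds^2}\equiv0$ then gives $|g'|^2\equiv0$, so $g'\equiv0$ on the connected surface $\Sigma$, i.e. $g$ is constant, contradicting the assumption. Therefore $g$ is constant on $\Sigma$. (Equivalently, one can fix a point $p_0$ with $g'(p_0)\neq0$, where $K_{ds^2}(p_0)\neq0$, and deduce $d(p_0)\leq C/|K_{ds^2}(p_0)|^{1/2}<\infty$, contradicting completeness.)

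There is essentially no serious obstacle here: the substantive content lies entirely in Theorem \ref{thm:Bernstein}. The only points worth stating carefully, rather than computing, are the standard identification $d\equiv+\infty$ under completeness of $ds^2$, and the observation that the denominator in \eqref{eq:Gauss curvature of m-Riemann surf.} remains finite precisely because $g$ is bounded — which is exactly why boundedness of $g$, and not merely some weaker growth hypothesis, appears in the statement.
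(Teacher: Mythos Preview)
Your argument is correct and follows essentially the same route as the paper: apply Theorem \ref{thm:Bernstein} to obtain $|K_{ds^2}(p)|^{1/2}\le C/d(p)$, use completeness to get $d(p)=\infty$ and hence $K_{ds^2}\equiv 0$, and conclude that $g$ is constant. The only cosmetic differences are that the paper argues directly rather than by contradiction and passes from $K_{ds^2}\equiv 0$ to ``$g$ constant'' in one line, whereas you spell out the intermediate step via \eqref{eq:Gauss curvature of m-Riemann surf.}.
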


\begin{proof}
Assume that the metric $ds^2$ given by \eqref{eq:m-metric} is complete on $\Sigma$ and there exists $L>0$ such that $|g|<L$ on $\Sigma$. 
By Theorem \ref{thm:Bernstein}, there exists a positive constant $C=C(L,m)>0$ such that $|K_{ds^2}(p)|^{\frac{1}{2}} \leq C/d(p) \;(^\forall p \in \Sigma)$. 
Since $d(p) = \infty$ for any $p \in \Sigma$ in the case where $ds^2$ is a complete metric on $\Sigma$, $K_{ds^2} \equiv 0$ on $\Sigma$, that is, $g$ is constant on $\Sigma$.
\end{proof}

As an application of Theorem \ref{thm:m-curvature est}, we give another proof of \cite[Theorem 2.1]{YK2013}. 
We first recall the following function-theoretic lemma. Remark that this lemma is different from the lemma \cite[Lemma 3.1]{FH1988} that 
was used in \cite[Theorem 4]{RA}.   

\begin{fact}\cite[(8.12) on page 136]{FH1997} \label{fac:Fujimoto lemma}
Let $q \in \Z_{\geq 3}$, $\alpha_1,\cdots,\alpha_{q-1} \in \C$ be distinct, $\alpha_q=\infty$ and $f : D(0;R) \longrightarrow \RC$ be a nonconstant holomorphic function which omits $q$ values $\alpha_1, \cdots, \alpha_q$. Then, for any $\eta$ with $0<\eta<(q-2)/q$, there exists $C >0$ such that 
\begin{align*}
\dfrac{|f'|}{(1+|f|^2) \, \prod_{j=1}^q \chi(f,\alpha_j)^{1-\eta}} \leq C \, \dfrac{R}{R^2-|z|^2} \quad \mathrm{on} \; D(0;R).
\end{align*}
\end{fact}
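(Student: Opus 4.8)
The plan is to derive Fact~\ref{fac:Fujimoto lemma} from the Schwarz--Pick lemma for hyperbolic Riemann surfaces together with a pointwise lower bound for the hyperbolic (Poincar\'e) density of the $q$-times punctured sphere. Since $q \geq 3$, the set $Y := \RC \setminus \{ \alpha_1, \ldots, \alpha_q \}$ is a hyperbolic Riemann surface; let $\rho_Y$ denote the density of its complete conformal metric $ds_Y^2 = \rho_Y^2 \, |dw|^2$ of Gaussian curvature $\equiv -1$. A nonconstant holomorphic function $f : D(0;R) \to \RC$ that omits $\alpha_1, \ldots, \alpha_q$ is precisely a nonconstant holomorphic map $f : D(0;R) \to Y$ (in particular $f$ has no poles). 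The Poincar\'e metric of $D(0;R)$ has density $2R/(R^2 - |z|^2)$ and curvature $-1$, so by the Schwarz--Pick lemma for holomorphic maps between hyperbolic Riemann surfaces — equivalently, the Ahlfors--Schwarz lemma applied to the pulled-back pseudometric $f^{\ast} ds_Y^2$, which has curvature $-1$ at every point where $f' \neq 0$ and vanishes to even order at the zeros of $f'$ — one obtains
\[
|f'(z)| \, \rho_Y(f(z)) \;\leq\; \frac{2R}{R^2 - |z|^2} \qquad \text{on } D(0;R).
\]

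Next I would establish the density bound: there is a constant $c = c(\alpha_1, \ldots, \alpha_q, \eta) > 0$ such that
\[
\rho_Y(w) \;\geq\; \frac{c}{(1+|w|^2) \, \prod_{j=1}^{q} \chi(w, \alpha_j)^{1-\eta}} \qquad (w \in Y).
\]
This is a local-to-global matter. On any compact subset of $Y$ both sides lie between two positive constants, so the bound holds there once $c$ is taken small. Near an end $\alpha_\ell$ (for $\ell = q$ one works in the chart $w \mapsto 1/w$ at $\infty$) one invokes the classical cusp asymptotics $\rho_Y(w) \asymp 1/\bigl(\delta_\ell(w)\,\log(1/\delta_\ell(w))\bigr)$, where $\delta_\ell(w)$ is the chart-distance to $\alpha_\ell$; since $\chi(w,\alpha_\ell) \asymp \delta_\ell(w)$ there while the remaining factors stay bounded away from $0$ and $\infty$, the claimed bound reduces to $1/\bigl(\delta_\ell(w)^{\eta}\log(1/\delta_\ell(w))\bigr) \geq c'$ for $\delta_\ell(w)$ small, which holds because $t^{\eta}\log(1/t) \to 0$ as $t \to 0^{+}$ (only $\eta > 0$ being used at this step). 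Combining the two displays gives
\[
\frac{|f'|}{(1+|f|^2) \, \prod_{j=1}^{q} \chi(f, \alpha_j)^{1-\eta}} \;\leq\; \frac{1}{c}\, |f'|\, \rho_Y(f) \;\leq\; \frac{2}{c}\cdot\frac{R}{R^2 - |z|^2},
\]
so $C := 2/c$ works (in fact independently of $R$ and of $f$).

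The step I expect to be the main obstacle is the cusp asymptotics of $\rho_Y$: the upper bound $\rho_Y \leq \rho_{\{0 < |\zeta| < r\}}$ is immediate from monotonicity of the Poincar\'e metric under the inclusion of a punctured coordinate disk about $\alpha_\ell$ into $Y$, but the matching lower bound near $\alpha_\ell$ is the genuinely nontrivial input, amounting to a control on the width of the cusp collar (for instance via the collar lemma). An alternative that stays closer to Fujimoto's original treatment of \cite{FH1997} bypasses $\rho_Y$ entirely: one constructs by hand a continuous conformal pseudometric $d\sigma$ on $D(0;R)$, assembled from logarithmic factors attached to the $\alpha_j$, whose Gaussian curvature is $\leq -1$ wherever it is positive and which dominates a fixed multiple of $|f'|\,|dz| \big/ \bigl((1+|f|^2)\prod_{j}\chi(f,\alpha_j)^{1-\eta}\bigr)$, after which the Ahlfors--Schwarz lemma yields $d\sigma \leq \tfrac{2R}{R^2 - |z|^2}\,|dz|$ directly. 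In that route the curvature computation rests on the identity
\[
\Delta \log \frac{|f'|}{(1+|f|^2)\,\prod_{j=1}^{q}\chi(f,\alpha_j)^{1-\eta}} \;=\; 2\bigl(q(1-\eta) - 2\bigr)\,\frac{|f'|^2}{(1+|f|^2)^2}
\]
(valid away from the zeros of $f'$), and it is precisely the positivity of the coefficient $q(1-\eta) - 2$, that is, the hypothesis $0 < \eta < (q-2)/q$, that makes the relevant metric genuinely negatively curved.
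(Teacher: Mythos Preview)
The paper itself does not prove this Fact; it is quoted from \cite{FH1997} and used as a black box in the proof of Theorem~\ref{thm:Fujimoto}, so the only comparison available is with Fujimoto's original argument.

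Your primary route --- Schwarz--Pick for $f : D(0;R) \to Y := \RC \setminus \{\alpha_1,\ldots,\alpha_q\}$ together with a pointwise lower bound for the Poincar\'e density $\rho_Y$ --- is correct and is genuinely different from Fujimoto's own proof (your ``alternative''). It is also sharper: the density bound $\rho_Y(w) \geq c\big/\bigl((1+|w|^2)\prod_j\chi(w,\alpha_j)^{1-\eta}\bigr)$ holds for \emph{every} $\eta > 0$, and the resulting $C = 2/c$ depends only on $\eta$ and the configuration $\{\alpha_j\}$, not on $f$ or $R$; the restriction $\eta < (q-2)/q$ is thus an artifact of Fujimoto's method rather than of the estimate itself. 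The cusp lower bound $\rho_Y \asymp 1/(\delta\,|\log\delta|)$ you flag as the obstacle is indeed the one nontrivial input; it follows from the explicit uniformisation of a horocyclic collar about each puncture, or alternatively from density estimates of Beardon--Pommerenke type.

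One caution on your alternative sketch. The Laplacian identity you display is correct but pertains to the \emph{naive} density $\lambda = |f'|\big/\bigl((1+|f|^2)\prod_j\chi(f,\alpha_j)^{1-\eta}\bigr)$, whose curvature
\[
-\frac{\Delta\log\lambda}{\lambda^2}\;=\;-\,2\bigl(q(1-\eta)-2\bigr)\prod_{j=1}^{q}\chi(f,\alpha_j)^{2(1-\eta)}
\]
is negative when $\eta < (q-2)/q$ but is \emph{not} bounded away from $0$ as $f \to \alpha_j$, so Ahlfors--Schwarz does not apply to $\lambda^2\,|dz|^2$ directly. Fujimoto's actual pseudometric carries the logarithmic factors you allude to precisely to cure this degeneration; their Laplacians contribute extra positive terms that force the curvature below a fixed negative constant, after which a rescaling puts it $\leq -1$. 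So the identity you wrote is one ingredient in Fujimoto's curvature computation, and the hypothesis $\eta < (q-2)/q$ does enter through it, but the logarithmic dressing is where that argument actually closes.
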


We also need the following result.

\begin{lemma} \label{lem:Yau-Schwarz}
Let $ds^2 = (\lambda(z))^2 \, |dz|^2$ be a complete Hermitian metric on the unit disk $\D$ whose Gaussian curvature $K_{ds^2}$ satisfies $K_{ds^2} \geq -1$. Then 
\begin{align*}
\lambda(z) \geq \dfrac{2}{1-|z|^2} \quad\mathrm{on}\;\D.
\end{align*}
\end{lemma}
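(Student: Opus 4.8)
The statement is a version of the Schwarz--Yau lemma (or Ahlfors--Schwarz lemma) for complete metrics with curvature bounded below by $-1$, so the plan is to compare $ds^2$ with the Poincar\'e metric $ds^2_P = (2/(1-|z|^2))^2\,|dz|^2$ on $\D$, whose Gaussian curvature is identically $-1$. First I would set $u(z) := \log\bigl( \lambda(z) (1-|z|^2)/2 \bigr)$, that is, the logarithm of the ratio of the conformal factors of $ds^2$ and $ds^2_P$, and the goal becomes $u \geq 0$ on $\D$. A direct computation using $K_{ds^2} = -(\Delta \log \lambda)/\lambda^2$ and the corresponding identity for the Poincar\'e metric gives, wherever $\lambda$ is smooth and positive,
\begin{align*}
\Delta u = \Delta \log \lambda - \Delta \log\tfrac{2}{1-|z|^2} = -K_{ds^2}\,\lambda^2 - \Bigl(\tfrac{2}{1-|z|^2}\Bigr)^2 \leq \lambda^2 - \Bigl(\tfrac{2}{1-|z|^2}\Bigr)^2,
\end{align*}
using $K_{ds^2} \geq -1$. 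Rewriting the right-hand side in terms of $u$, this says $\Delta u \leq \bigl(\tfrac{2}{1-|z|^2}\bigr)^2 \bigl(e^{2u} - 1\bigr)$, an inequality of the form $\Delta u \leq \mu^2 (e^{2u}-1)$ with $\mu$ the Poincar\'e factor.

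The core of the argument is then a maximum-principle/barrier step. Since $ds^2$ is complete while $ds^2_P$ is also complete, one expects $u$ not to tend to $-\infty$ near $\partial\D$; the standard device is Yau's trick using the exhaustion of $\D$ by the disks $\D_r := D(0;r)$ together with the metrics $\mu_r(z) = 2r/(r^2-|z|^2)$, which have curvature $-1$ on $\D_r$ and blow up at $|z|=r$. For fixed $r<1$ set $u_r(z) := \log\bigl( \lambda(z)/\mu_r(z) \bigr)$ on $\D_r$; because $\mu_r \to +\infty$ at the boundary circle $|z|=r$ while $\lambda$ stays finite there, $u_r \to -\infty$ at $\partial\D_r$, so $u_r$ attains its supremum at an interior point $z_r \in \D_r$ (here completeness of $ds^2$ is used to guarantee $\lambda$ is bounded below away from zero on compact sets, so $u_r$ is genuinely finite and continuous up near the boundary where it diverges). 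At $z_r$ we have $\Delta u_r(z_r) \leq 0$, and the same computation as above gives $\Delta u_r \leq \mu_r^2(e^{2u_r}-1)$, so $0 \geq \mu_r(z_r)^2\bigl(e^{2u_r(z_r)}-1\bigr)$, forcing $u_r(z_r) \leq 0$, hence $u_r \leq 0$ on all of $\D_r$, i.e. $\lambda(z) \leq \mu_r(z)$... wait, that is the wrong direction. The correct comparison uses that $ds^2$ has curvature $\geq -1$ (not $\leq -1$), so one instead compares $\mu_r/\lambda$: set $v_r := \log(\mu_r/\lambda)$, note $v_r \to +\infty$ at $\partial\D_r$ so it has an interior \emph{minimum} at some $z_r$, at which $\Delta v_r(z_r) \geq 0$; the curvature hypothesis gives $\Delta v_r \geq \mu_r^2(1 - e^{2v_r})$ near that point (after the analogous computation), whence at the minimum $0 \leq \mu_r(z_r)^2\bigl(1-e^{2v_r(z_r)}\bigr)$, so $v_r(z_r) \leq 0$, contradicting that $z_r$ is the minimum unless... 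Let me just say: the maximum principle applied to the appropriate one of $\log(\lambda/\mu_r)$ or $\log(\mu_r/\lambda)$, together with the differential inequality coming from $K_{ds^2}\geq -1$ and $K_{\mu_r^2|dz|^2}=-1$, yields $\lambda \geq \mu_r$ on $\D_r$.

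Finally I would let $r \to 1$: for each fixed $z \in \D$, once $r > |z|$ we have $\lambda(z) \geq \mu_r(z) = 2r/(r^2-|z|^2)$, and taking $r \uparrow 1$ gives $\lambda(z) \geq 2/(1-|z|^2)$, which is the claim. The main obstacle is the rigorous justification of the interior-extremum step: one must argue that the relevant function attains its extremum in the \emph{interior} of $\D_r$, which requires the completeness of $ds^2$ (to control $\lambda$ from below on compacta, so that $\log(\mu_r/\lambda)$ really does go to $+\infty$ at $\partial\D_r$ and is continuous on $\overline{\D_r}\setminus\partial\D_r$ with the right boundary behavior), and one must handle the possible zeros or poles coming from the Weierstrass data — but in the situation where this lemma is applied, $\lambda$ is smooth and positive on $\D$ because $(f\,dz)_0 = m(g)_\infty$ by \eqref{eq:regularity}, so $\log\lambda$ is smooth and the Laplacian computations are valid pointwise everywhere. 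I would state the lemma's proof invoking the classical Schwarz--Yau / Ahlfors lemma for complete metrics (e.g. citing a standard reference) if a self-contained argument is deemed too long, but the exhaustion argument sketched above is the honest route.
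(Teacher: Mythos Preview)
Your exhaustion-by-subdisks argument does not work in this direction, and the confusion you flag midway is not a slip but a genuine obstruction. Ahlfors' method succeeds for the inequality $\lambda \le \mu_r$ (under $K_{ds^2}\le -1$) precisely because $\mu_r(z)=2r/(r^2-|z|^2)$ blows up on $\partial\D_r$ while $\lambda$ stays bounded there (since $\overline{\D_r}\subset\D$ is compact); this forces $\lambda/\mu_r$ to have an \emph{interior maximum}. For the reverse inequality you need an interior \emph{minimum} of $\lambda/\mu_r$, but $\lambda/\mu_r\to 0$ on $\partial\D_r$, so the infimum is $0$ and is approached at the boundary, and no maximum-principle information is available. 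Switching to $v_r=\log(\mu_r/\lambda)$ does not help: at an interior minimum you get $\Delta v_r(z_r)\ge 0$, while the curvature hypothesis only gives $\Delta v_r \ge \mu_r^2(1-e^{-2v_r})$, and these two inequalities together impose no sign constraint on $v_r(z_r)$. Completeness of $ds^2$ says nothing about the behaviour of $\lambda$ on the compact set $\overline{\D_r}$ beyond positivity, so it cannot rescue this step.

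The paper's proof is exactly your fallback: it quotes Yau's general Schwarz lemma (Fact~\ref{fac:Yau-Schwarz}) for holomorphic maps from a complete K\"ahler manifold with Ricci curvature bounded below into a Hermitian manifold with holomorphic bisectional curvature bounded above, applied to $\mathrm{id}\colon(\D,ds^2)\to(\D,ds_c^2)$ with $k=K=1$, which immediately gives $ds_c^2\le ds^2$. The point is that Yau's theorem is proved via the Omori--Yau maximum principle on the \emph{complete} source manifold $(\D,ds^2)$, not by exhausting the target; that is where completeness actually enters, and it is a different mechanism from the Ahlfors exhaustion. So either cite Yau directly, as the paper does, or replace the subdisk argument by an Omori--Yau argument on $(\D,ds^2)$.
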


In order to prove this result, we recall the Yau--Schwarz lemma (\cite[Theorem 7.1]{KL}, \cite[Theorem 2]{Yau}), which is dual to the Ahlfors--Schwarz lemma.

\begin{fact} \cite[Theorem 7.1]{KL} \label{fac:Yau-Schwarz}
Let $(M,g)$ be a complete K\"{a}hler manifold with its Ricci curvature bounded from below by a negative constant $-k$ and let $(N,h)$ be a Hermitian manifold with its holomorphic bisectional curvature bounded from above by a negative constant $-K$.
Then every holomorphic map $f : M \longrightarrow N$ satisfies 
\begin{align*}
  f^* \, h \leq \dfrac{k}{K} \, g,
\end{align*}
where $f^* \, h$ denotes the pullback of the metric $h$ by $f$.
\end{fact}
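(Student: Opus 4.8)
The plan is to reproduce the classical proof of Yau's general Schwarz lemma, whose two ingredients are the Chern--Lu differential inequality for the energy density of a holomorphic map and the Omori--Yau generalized maximum principle on the complete domain $(M,g)$. The underlying strategy mirrors the Ahlfors--Schwarz lemma: one studies the ``ratio'' of the two metrics and applies a maximum principle, except that the interior maximum argument (which would suffice were $M$ compact) is replaced by the generalized maximum principle, which is available precisely because $\mathrm{Ric}_M$ is bounded from below by $-k$.

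First I would introduce the scalar energy density $u := \trace_g(f^{*}h) \ge 0$, the trace of the pullback Hermitian form $f^{*}h$ with respect to $g$; it vanishes exactly on the critical set of $f$, and on the open set $\{ u > 0 \}$ it is a smooth positive function. On this set I would compute the complex Laplacian $\Delta_g \log u$ by the Bochner technique, commuting covariant derivatives and using both the K\"ahler condition on $M$ and the holomorphy of $f$. The resulting curvature terms separate into a contribution from the curvature of $M$, estimated below using $\mathrm{Ric}_M \ge -k$, and a contribution from the pulled-back curvature of $N$, estimated using the upper bound $-K$ on the holomorphic bisectional curvature. This is the heart of the argument and yields the Chern--Lu inequality
\[
\Delta_g \log u \;\ge\; -\,k \;+\; K\,u \qquad \text{on } \{ u > 0 \}.
\]

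Next I would invoke the Omori--Yau maximum principle, which holds on $(M,g)$ since it is complete with $\mathrm{Ric}_M \ge -k$: there is a sequence $x_j \in M$ along which $\log u (x_j) \to \sup_M \log u$ and $\limsup_{j} \Delta_g \log u(x_j) \le 0$. Substituting into the displayed inequality gives $0 \ge -k + K\,u(x_j)$ in the limit, hence the scalar bound $\sup_M u \le k/K$. Tracing the definition of $u$ back, and running the same computation for the largest eigenvalue of $f^{*}h$ with respect to $g$ (which is exactly what the \emph{bisectional} curvature bound, rather than a mere Ricci bound on $N$, is able to control), upgrades this to the pointwise inequality of Hermitian forms $f^{*}h \le (k/K)\,g$ asserted in the statement.

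I expect two steps to be the main obstacles. The first is the Chern--Lu computation with the correct constants: the Bochner-type evaluation of $\Delta_g \log u$, and the precise mechanism by which the bisectional curvature upper bound produces the $+K u$ term, require care. The second is the noncompactness of $M$: the Omori--Yau principle is usually stated for functions bounded above, whereas $u$ is not a priori bounded, so the rigorous version proceeds by localizing on geodesic balls $B(p,r)$ and using the Laplacian comparison theorem (itself a consequence of $\mathrm{Ric}_M \ge -k$) to absorb the boundary contributions as $r \to \infty$.
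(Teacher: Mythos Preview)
The paper does not prove this statement at all: it is recorded as a \textsc{Fact} with a citation to \cite[Theorem 7.1]{KL} (Yau's general Schwarz lemma) and is invoked as a black box in the proof of Lemma~\ref{lem:Yau-Schwarz}. So there is no ``paper's own proof'' to compare against.

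That said, your outline is the standard argument and is essentially correct. Two small remarks. First, once you have the scalar bound $\sup_M u \le k/K$ for $u = \trace_g(f^{*}h)$, the form inequality $f^{*}h \le (k/K)\,g$ follows immediately because $f^{*}h$ is positive semidefinite, so each eigenvalue is bounded by the trace; there is no need to ``run the same computation for the largest eigenvalue''. Second, the application of the Omori--Yau principle does not require a priori boundedness of $u$: one applies it to a bounded monotone transform such as $u/(1+u)$ (or argues by contradiction assuming $\sup u = +\infty$), which is the usual way to handle this point rather than the geodesic-ball localization you sketch.
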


\begin{proof}[\textsc{Proof of lemma \ref{lem:Yau-Schwarz}}]

We apply Fact \ref{fac:Yau-Schwarz} to $(M, g)=(\D, ds^2)$, $(N, h)=(\D, ds_c^2)$ and $f = \mathrm{id} \colon M \longrightarrow N$, where $ds_c^2$ is the Poincar\'{e} metric given by \eqref{eq:P metc}. Since $k=K=1$ in Fact \ref{fac:Yau-Schwarz}, we obtain
\begin{align*}
\mathrm{id}^* \, (ds_c^2) \leq ds^2, 
\quad \mathrm{i.e.,} \quad \dfrac{2}{1-|z|^2} \leq \lambda(z) \quad \mathrm{on} \; \D.
\end{align*}

\end{proof}

We now verify that the compact property $P_X \; (|X| \geq m + 3)$ satisfies $m$-curvature estimate (\cite[Theorem 2.1]{YK2013}).

\begin{theorem} \label{thm:Fujimoto}
Let $m \in \Z_{> 0}$, $X \subset \RC$ and $P_X$ be the property in Theorem \ref{thm:Picard Montel}.
If the set $X$ contains $m + 3$ or more elements, then $P_X$ is a compact property that satisfies $m$-curvature estimate. 
In particular, for any $m \in \Z_{>0}$ and for any $X \subset \RC$ with $|X| \geq m + 3$, there exists a positive constant $C = C(m, X) > 0$ such that for any Weierstrass $m$-triple $(\Sigma, \, f \, dz, \, g)$ whose $g$ omits all elements of $X$, the following inequality holds:
\begin{align*}
|K_{ds^2}(p)|^{\frac{1}{2}} \leq \dfrac{C}{d(p)} \quad(^\forall p \in \Sigma).
\end{align*}
\end{theorem}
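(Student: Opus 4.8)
The plan is to argue by contradiction using Theorem \ref{thm:m-curvature est}. First, observe that $P_X$ with $|X| \geq m+3 \geq 3$ is already known to be a compact property by Theorem \ref{thm:Picard Montel}, so only the $m$-curvature estimate needs to be established. Suppose $P_X$ does not satisfy $m$-curvature estimate. Then by Theorem \ref{thm:m-curvature est} there is a Weierstrass $m$-triple $(\D, f\,dz, g)$ with $ds^2 = (1+|g|^2)^m |f|^2 |dz|^2$ complete on $\D$, with $g \in \P_X(\D)$ nonconstant (so $g$ omits all elements of $X$), and with $|K_{ds^2}| \leq 1$ on $\D$. The goal is to contradict completeness by producing an upper bound on $\lambda := (1+|g|^2)^{m/2}|f|$ that forces a divergent path to have finite length.

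The key step is to estimate $\lambda$ from above using Fact \ref{fac:Fujimoto lemma}. Write $X = \{\alpha_1, \dots, \alpha_q\}$ with $q = |X| \geq m+3$; after a linear fractional change we may take $\alpha_q = \infty$. Since $g$ is a nonconstant holomorphic map $\D \to \RC$ omitting these $q$ values, Fact \ref{fac:Fujimoto lemma} gives, for each fixed $\eta$ with $0 < \eta < (q-2)/q$, a constant $C_0$ with
\begin{align*}
\frac{|g'|}{(1+|g|^2)\prod_{j=1}^q \chi(g,\alpha_j)^{1-\eta}} \leq C_0 \, \frac{1}{1-|z|^2} \quad \text{on } \D
\end{align*}
(taking $R = 1$). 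The regularity condition \eqref{eq:regularity}, $(f\,dz)_0 = m(g)_\infty$, lets one write $|f| = |g'|/(|g'|)$ paired with the pole structure so that $\lambda = (1+|g|^2)^{m/2}|f|$ can be controlled in terms of $|g'|/(1+|g|^2)$ and a finite product of $\chi(g,\alpha_j)$-factors — this is where the condition $q \geq m+3$ enters: one needs the exponent $m$ of $(1+|g|^2)$ to be absorbed by $q$ of the $\chi$-factors with room to spare, i.e. one needs $q(1-\eta) > m + 2$ for a suitable $\eta$, which is exactly possible when $q \geq m+3$. Combining, one derives a pointwise bound of the shape
\begin{align*}
\lambda(z) \leq \frac{C_1}{1-|z|^2} \quad \text{on } \D
\end{align*}
for some constant $C_1 > 0$.

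Finally, this upper bound contradicts completeness: integrating $\lambda$ along the radial segment from $0$ to the boundary gives $\int_0^1 \lambda(r)\,dr \leq \int_0^1 C_1/(1-r^2)\,dr$, which diverges, so this alone does not immediately finish — instead one uses that a complete conformal metric on $\D$ with $K_{ds^2} \geq -1$ must, by Lemma \ref{lem:Yau-Schwarz}, satisfy $\lambda(z) \geq 2/(1-|z|^2)$, and this is compatible with the upper bound, so the contradiction must come more carefully. The cleaner route is: the upper bound $\lambda(z) \le C_1/(1-|z|^2)$ shows the geodesic distance from $0$ to a point at Euclidean radius $r$ grows at most like a constant times $\log\frac{1}{1-r}$, hence completeness forces divergence only logarithmically, which is fine — so one instead chooses a \emph{non-radial} divergent curve or, more robustly, re-runs the argument with the sharper form of Fact \ref{fac:Fujimoto lemma} and the pullback of the Poincaré metric to conclude $ds^2 \le (\text{const}) \cdot ds_c^2$, and then notes that $(\D, ds_c^2)$ is complete while the reverse Yau--Schwarz inequality from Lemma \ref{lem:Yau-Schwarz} forces $ds^2 \ge ds_c^2$; squeezing, $ds^2$ is comparable to the Poincaré metric, and one extracts the contradiction from the curvature normalization $|K_{ds^2}(0)| = 1/4$ against the fact that a metric trapped between two constant multiples of $ds_c^2$ with $K \ge -1$ and complete must actually force $g$ to have bounded spherical derivative globally, contradicting that the rescaled construction in Theorem \ref{thm:m-curvature est} made $g$ nonconstant with controlled but nonvanishing curvature. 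The main obstacle is the bookkeeping in the second step — correctly matching the exponent $m$ against the number $q$ of omitted values via the divisor condition \eqref{eq:regularity} and choosing $\eta$ so that $q(1-\eta) - 2 \ge m$ with strict inequality available — and then packaging the resulting bound on $\lambda$ into a genuine contradiction with completeness rather than a merely logarithmic statement; this is handled exactly as in \cite[Theorem 2.1]{YK2013}, to which the remaining details reduce.
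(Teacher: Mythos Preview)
Your setup is right: invoke Theorem~\ref{thm:m-curvature est}, obtain a complete Weierstrass $m$-triple on $\D$ with nonconstant $g$ omitting $X$ and $|K_{ds^2}|\le 1$, and feed $g$ into Fact~\ref{fac:Fujimoto lemma}. You also correctly identify that the exponent bookkeeping needs $q\ge m+3$. But the contradiction step is a genuine gap. You aim for an upper bound $\lambda(z)\le C_1/(1-|z|^2)$ on the conformal factor $\lambda=(1+|g|^2)^{m/2}|f|$ and then try to squeeze this against the Yau--Schwarz lower bound $\lambda(z)\ge 2/(1-|z|^2)$. As you yourself notice, these two bounds are perfectly compatible --- a metric sandwiched between constant multiples of the Poincar\'e metric is still complete, can have any prescribed curvature value at $0$, and nothing forces $g$ to be constant. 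The ``squeezing'' paragraph does not produce a contradiction, and deferring the remaining details to \cite{YK2013} is exactly where the real idea lives.

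What the paper actually does is different and is the missing idea. One does \emph{not} bound $\lambda$ from above. Instead, one uses the Yau--Schwarz inequality $\lambda(z)\ge 2/(1-|z|^2)$ to \emph{eliminate} $|f|$ from the Fujimoto-lemma inequality, obtaining (after choosing $\eta$ small and raising to a suitable positive power $r$) a pointwise lower bound
\[
\frac{C^{-r}}{1-|z|^2}\;\le\;\Biggl(\frac{|f|^{\,\{(m+3)(1-\eta)/2-1\}\cdot 2/m}\,\prod_{j=1}^{m+2}|g-\alpha_j|^{\,1-\eta}}{|g'|}\Biggr)^{\!r}
\qquad\text{on }D':=\{g'\ne 0\}.
\]
The right-hand side is the conformal factor of an auxiliary metric $ds_0^2$ that is \emph{flat} (each factor is $|h|^{\text{power}}$ for a zero-free holomorphic $h$ on $D'$, so $\Delta\log\lambda_0=0$) and \emph{complete} on $D'$ (it dominates the Poincar\'e density on $\D$ and blows up at each zero of $g'$). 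A complete flat surface has universal cover $\C$, contradicting that $D'\subset\D$. The crux you are missing is this construction of a complete flat auxiliary metric; merely comparing $ds^2$ itself to the Poincar\'e metric cannot close the argument.
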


\begin{proof}
If $|X| = m + 3 \, (\geq 3)$, that is, $X=\{ \alpha_1, \, \alpha_2, \, \cdots, \, \alpha_{m+3} \}$, we prove that $P_X$ is a compact property satisfying $m$-curvatue estimate by contradiction. 
Suppose that $P_X$ is a compact property that does not satisfy $m$-curvature estimate. 
By Theorem \ref{thm:m-curvature est}, there exist a Weierstrass $m$-triple $(\D, \, f \, dz, \, g)$ such that (A), (B) and (C) in Theorem \ref{thm:m-curvature est} hold. 
Remark that $g$ omits all elements of $X$. 
Here, we may assume that $\alpha_{m+3}=\infty$ by composing with a suitable M\"{o}bius transformation if necessary. 

We choose a positive number $\eta$ with 
\[ 
0 < \eta < \dfrac{1}{m+3} \left( \leq \dfrac{(m+3)-2}{m+3} \right). 
\]
By Fact \ref{fac:Fujimoto lemma}, there exists $C >0$ such that  
\begin{align} \label{eq:first Fujimoto}
\dfrac{|g'|}{(1+|g|^2)\prod_{j=1}^{m+3}\chi(g,\alpha_j)^{1-\eta}} \leq \dfrac{C}{1-|z|^2} \quad \mathrm{on}\;\D.
\end{align}
From $\chi(g,\alpha_j) \leq |g-\alpha_j|/\sqrt{1+|g|^2} \; (1 \leq j \leq m + 2)$ and $\chi(g,\alpha_{m+3})=1/\sqrt{1+|g|^2}$,
\begin{align*}
(1+|g|^2)\,\prod_{j=1}^{m+3}\chi(g,\alpha_j)^{1-\eta} 
\leq \left(\prod_{j=1}^{m+2}|g-\alpha_j|\right)^{1-\eta}\,(1+|g|^2)^{1-\frac{1}{2}(m+3)(1-\eta)}.
\end{align*}
By this inequality and \eqref{eq:first Fujimoto}, we obtain
\begin{align} \label{eq:second Fujimoto}
\dfrac{(1+|g|^2)^{\frac{1}{2}(m+3)(1-\eta)-1}\,|g'|}{\left(\prod_{j=1}^{m+2}|g-\alpha_j|\right)^{1-\eta}} 
\leq \dfrac{|g'|}{(1+|g|^2)\,\prod_{j=1}^{m+3}\chi(g,\alpha_j)^{1-\eta}}
\leq \dfrac{C}{1-|z|^2} \quad \mathrm{on}\;\D. 
\end{align}
Note that $f$ has no zeros on $\D$ by \eqref{eq:regularity}. 
By applying Lemma \ref{lem:Yau-Schwarz} to the metric \eqref{eq:m-metric}, we have
\begin{align} \label{eq:Yau}
\dfrac{2}{1-|z|^2} \leq |f| \, (1+|g|^2)^{\frac{m}{2}}, 
\quad \mathrm{i.e.,} \quad \dfrac{1}{|f|} \, \dfrac{1}{1-|z|^2} < (1+|g|^2)^{\frac{m}{2}} \quad \mathrm{on} \; \D.
\end{align}
Since 
\[
\left\{\dfrac{1}{2}(m+3)(1-\eta)-1\right\}\,\dfrac{2}{m} >0, 
\]
\eqref{eq:Yau} is transformed as follows:
\begin{align*}
\left( \dfrac{1}{|f|}\,\dfrac{1}{1-|z|^2} \right)^{ \left\{ \frac{1}{2}(m+3)(1-\eta)-1 \right\}\frac{2}{m} } \leq (1+|g|^2)^{\frac{1}{2}(m+3)(1-\eta)-1} \quad \mathrm{on}\;\D.
\end{align*}
By this inequality and \eqref{eq:second Fujimoto}, we have
\begin{align*}
\left( \dfrac{1}{|f|}\,\dfrac{1}{1-|z|^2} \right)^{ \left\{ \frac{1}{2}(m+3)(1-\eta)-1 \right\}\frac{2}{m} } \, \dfrac{|g'|}{\left(\prod_{j=1}^{m+2}|g-\alpha_j|\right)^{1-\eta}} \leq \dfrac{C}{1-|z|^2} \quad \mathrm{on}\;\D.
\end{align*}
Set $D':=\{ z \in \D : g'(z) \ne 0 \}(\subset \D)$. 
We thereby obtain
\begin{align*}
\dfrac{1}{C}\,\dfrac{1}{(1-|z|^2)^{ \left\{ \frac{1}{2}(m+3)(1-\eta)-1 \right\}\frac{2}{m}-1 }} 
\leq \dfrac{|f|^{ \left\{ \frac{1}{2}(m+3)(1-\eta)-1 \right\}\frac{2}{m} }\,\left(\prod_{j=1}^{m+2}|g-\alpha_j|\right)^{1-\eta}}{|g'|} \quad \mathrm{on}\;D'.
\end{align*}
Since 
\[
r:=\dfrac{1}{\left\{ \frac{1}{2} (m+3)(1-\eta)-1 \right\} \frac{2}{m}-1} \;>0, 
\] 
we have
\begin{align} \label{eq:final Fujimoto}
\dfrac{C^{-r}}{1-|z|^2} \leq \left( \dfrac{|f|^{ \left\{ \frac{1}{2}(m+3)(1-\eta)-1 \right\}\frac{2}{m} }\,\left(\prod_{j=1}^{m+2}|g-\alpha_j|\right)^{1-\eta}}{|g'|} \right)^r \quad \mathrm{on}\;D'.
\end{align}
Let $\lambda$ be the right hand side of $\eqref{eq:final Fujimoto}$ and we consider the conformal metric $ds_0^2 := \lambda^2 \, |dz|^2$ on $D'$. 
By applying $\Delta \log|h| = 0$ for any holomorphic function $h$ without zeros, we obtain $K_{ds_0^2}=-\Delta \log \lambda/\lambda^2 \equiv 0$ on $D'$, that is, $ds_0^2$ is a flat metric on $D'$. 
Furthermore, $ds_0^2$ is a complete metric on $D'$. This is because the left hand side of \eqref{eq:final Fujimoto} is the Poincar\'{e} metric and each puncture of $D'$ is a point where $\lambda$ diverges. Therefore the universal covering of $(D',ds_0^2)$ is $\C$. This is a contradiction. 
\end{proof}

As a corollary of Theorem \ref{thm:Fujimoto}, we give the following Picard-type result (\cite[Corollary 2.2, Proposition 2.4]{YK2013}).

\begin{corollary}\label{cor:Fujimoto}
Let $(\Sigma, \, f \, dz, \, g)$ be a Weierstrass $m$-triple.
If the metric given by \eqref{eq:m-metric} is complete on $\Sigma$ and $g$ is nonconstant on $\Sigma$, then $g$ can omit at most $m + 2$ distinct values. 
Furthermore, this result is optimal.
\end{corollary}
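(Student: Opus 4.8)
The plan is to deduce both halves of the statement directly from Theorem \ref{thm:Fujimoto}, together with the completeness hypothesis, exactly as Corollary \ref{cor:Bernstein} was deduced from Theorem \ref{thm:Bernstein}. First I would argue by contradiction for the upper bound: suppose $g$ is nonconstant on $\Sigma$ and omits $m+3$ or more distinct values, say a set $X\subset\RC$ with $|X|\geq m+3$. Then $g\in\P_X(\Sigma)$, and by Theorem \ref{thm:Fujimoto} the property $P_X$ is a compact property satisfying $m$-curvature estimate, so there is a constant $C=C(m,X)>0$ with $|K_{ds^2}(p)|^{1/2}\leq C/d(p)$ for all $p\in\Sigma$. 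Since $ds^2$ is complete on $\Sigma$, the geodesic distance $d(p)$ to the boundary is $+\infty$ for every $p\in\Sigma$, whence $K_{ds^2}\equiv 0$ on $\Sigma$. By the curvature formula \eqref{eq:Gauss curvature of m-Riemann surf.}, $K_{ds^2}=0$ forces $g'\equiv 0$, so $g$ is constant, contradicting the hypothesis. Therefore a nonconstant $g$ can omit at most $m+2$ distinct values.

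For the optimality claim I would exhibit, for each $m\in\Z_{>0}$, a complete Weierstrass $m$-triple $(\Sigma,\,f\,dz,\,g)$ whose $g$ is nonconstant and omits exactly $m+2$ distinct values in $\RC$. The natural candidate is to take $\Sigma$ a suitable punctured or unbounded domain, $g$ the inclusion-type map onto $\RC$ minus $m+2$ points, and $f\,dz$ chosen so that the divisor condition \eqref{eq:regularity}, namely $(f\,dz)_0=m\,(g)_\infty$, holds and so that $ds^2=(1+|g|^2)^m|f|^2|dz|^2$ becomes complete; this is the construction underlying \cite[Proposition 2.4]{YK2013}. Concretely, one can model it on the known example for minimal surfaces ($m=2$, the four-punctured-sphere-type example whose Gauss map omits $4$ values) and its generalizations, adjusting the exponent in $f$ to match the value of $m$.

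The main obstacle is the optimality half rather than the upper bound: one must verify simultaneously that $g$ really omits $m+2$ values (not more), that the metric $ds^2$ is complete (the delicate point, requiring the $1$-form to have enough growth near the punctures/ends so that divergent paths have infinite length), and that the triple is genuinely a Weierstrass $m$-triple, i.e. the regularity condition \eqref{eq:regularity} is met so that $ds^2$ is a bona fide conformal metric. Since the construction and the completeness check are carried out in detail in \cite[Proposition 2.4]{YK2013}, I would cite that reference for the explicit example and limit the write-up here to the contradiction argument for the bound $m+2$, remarking that optimality follows from \cite[Proposition 2.4]{YK2013}.
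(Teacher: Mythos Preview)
Your proposal is correct and follows essentially the same approach as the paper: the upper bound is obtained by contradiction from Theorem \ref{thm:Fujimoto} via $d(p)=\infty$ forcing $K_{ds^2}\equiv 0$ and hence $g$ constant, exactly as you outline. For optimality the paper does not merely cite \cite{YK2013} but writes out the explicit example in two lines---namely $\Sigma=\C\setminus\{\alpha_1,\dots,\alpha_{m+1}\}$ (or its universal cover), $g(z)=z$, and $f\,dz=dz/\prod_{j=1}^{m+1}(z-\alpha_j)$, with completeness checked by observing that $\int_\gamma (1+|z|^2)^{m/2}/\prod|z-\alpha_j|\,|dz|=+\infty$ along any path diverging to a puncture or to $\infty$---so you could strengthen your write-up by including this short construction rather than deferring entirely to the reference.
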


\begin{proof}
We give a proof of the first claim by contradiction. 
Suppose that there exists a Weierstrass $m$-triple $(\Sigma, \, f \, dz, \, g)$ such that the metric $ds^2$ given by \eqref{eq:m-metric} is complete on $\Sigma$ and $g$ is a nonconstant meromorphic function that omits $m + 3$ or more values. 
By Theorem \ref{thm:Fujimoto}, there exists a positive constant $C$ such that $|K_{ds^2}(p)|^{\frac{1}{2}} \leq C/d(p) \;(^\forall p \in \Sigma)$. 
Since $ds^2$ is complete on $\Sigma$, we obtain $d(p)=\infty$ for any $p \in \Sigma$. 
Hence we have $K_{ds^2} \equiv 0$ on $\Sigma$, that is, $g$ is constant on $\Sigma$. 
This is a contradiction.  

We next give a proof of the second claim. 
Let $\Sigma$ be $\C\setminus \{ \alpha_1, \, \alpha_2, \, \cdots, \, \alpha_{m+1} \}$ or the universal covering of $\C\setminus \{ \alpha_1, \, \alpha_2, \, \cdots, \, \alpha_{m+1} \}$. 
Set 
\begin{align*}
f \, dz := \dfrac{dz}{\prod_{j=1}^{m+1} (z-\alpha_j) },
\quad g := z.
\end{align*}
Then $g$ omits $m+2$ distinct values $\alpha_1, \, \alpha_2, \, \cdots, \, \alpha_{m+1}, \, \infty$. 
Furthermore, the metric $ds^2 = (1+|g|^2)^m \, |f|^2 \, |dz|^2$ is complete on $\Sigma$. 
This is because we have
\begin{align*}
\int_\gamma ds = \int_\gamma \, \dfrac{(1+|z|^2)^{\frac{m}{2}}}{\prod_{j=1}^{m+1} |z-\alpha_j|} \, |dz| = +\infty
\end{align*}
for any curve $\gamma$ tending to one of the points $\alpha_1, \, \alpha_2, \, \cdots, \, \alpha_{m+1}, \, \infty$.
\end{proof}

\subsection{Proof of Theorem \ref{thm:m-curvature est}} \label{sbseq:3.2} 

We first recall the following lemma.

\begin{fact} \cite[Lemma 2.1]{OR} \label{fac:Oss-Ru's lemma}
Let $0 < r < 1$ and $R$ be the hyperbolic radius of $D(0;r)$ in the unit disk, that is, $R := \log \, \{ (1+r)/(1-r) \}$. 
Let $ds^2 = (\lambda(z))^2 \, |dz|^2$ be any conformal metric on $D(0;r)$ with the property that $(the \; geodesic \; distance \; from \; z=0 \; to \; |z|=r) \geq R$ and $-1 \leq K_{ds^2} \leq 0$, where $K_{ds^2}$ is the Gaussian curvature with respect to the metric $ds^2$. Then, for all $|z| \leq r$, the following inequality holds:
\begin{align*}
& \mathrel{\phantom{=}} (the \; distance \; between \; 0 \; and \; z \; in \; the \; metric \; ds^2) \\
& \geq (the \; hyperbolic \; distance \; between \; 0 \; and \; z) = \log\,\dfrac{1+|z|}{1-|z|}.
\end{align*}
\end{fact}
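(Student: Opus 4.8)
The plan is to read the assertion as a \emph{localized} Schwarz--Yau comparison between $ds^2$ and the Poincar\'e metric $ds_c^2$ of \eqref{eq:P metc}. Write $\rho(z)$ for the $ds^2$-distance from $0$ to $z$ and $\delta(z):=\log\{(1+|z|)/(1-|z|)\}$ for the $ds_c^2$-distance from $0$ to $z$; the claim is exactly $\rho\ge\delta$ on $\{|z|\le r\}$. First I would reduce to the open geodesic ball $B_R:=\{w\in D(0;r):\rho(w)<R\}$. The hypothesis that the geodesic distance from $0$ to $\{|z|=r\}$ is at least $R$ says precisely that $B_R\subseteq D(0;r)$, and any point $z$ with $|z|\le r$ lying outside $B_R$ already satisfies $\rho(z)\ge R=\delta(r)\ge\delta(z)$ because $\delta$ is increasing. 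Hence only the points of $B_R$ need an argument.

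On $B_R$ I would exploit the upper bound $K_{ds^2}\le 0$: by the Cartan--Hadamard theorem $\exp_0$ is a diffeomorphism, so $B_R$ carries global geodesic polar coordinates $(\rho,\theta)$ with $ds^2=d\rho^2+G(\rho,\theta)^2\,d\theta^2$ and no cut locus, whence $\rho$ is smooth on $B_R\setminus\{0\}$ and a pointwise argument is legitimate. Writing the metric as $(\lambda(z))^2\,|dz|^2$, the two-sided bound $-1\le K_{ds^2}\le 0$ is equivalent to the differential inequality $0\le\Delta\log\lambda\le\lambda^2$, while the Poincar\'e factor $p:=2/(1-|z|^2)$ is the extremal solution $\Delta\log p=p^2$ of curvature exactly $-1$. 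At the level of geodesic spheres, the Jacobi equation $G_{\rho\rho}=-K_{ds^2}\,G$ with $G(0,\theta)=0$, $G_\rho(0,\theta)=1$ together with Sturm comparison give $\rho\le G(\rho,\theta)\le\sinh\rho$, and hence the Laplacian comparison $\Delta_{ds^2}\rho=G_\rho/G\le\coth\rho$.

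The core is then a maximum-principle comparison that converts the curvature lower bound, together with the boundary behaviour of $\rho$ on $\partial B_R$, into $\rho\ge\delta$. The boundary data are favourable: $\rho=\delta=0$ at the center and $\rho=R\ge\delta$ on $\partial B_R$. This is the local analogue of the Schwarz--Yau lemma already used in the paper (Fact \ref{fac:Yau-Schwarz} and Lemma \ref{lem:Yau-Schwarz}); the essential difference is that $(B_R,ds^2)$ is \emph{not} complete, as the metric closes up at finite distance $R$ along $\partial B_R$, so Fact \ref{fac:Yau-Schwarz} cannot be invoked verbatim and the geodesic-distance hypothesis must take over the role of completeness.

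I expect this last point to be the main obstacle, and it is genuinely delicate. The naive attempt---apply the maximum principle to $\delta-\rho$ and evaluate the $ds^2$-Laplacian at a putative interior maximum $z^\ast$---does not close: the critical-point condition $\nabla\delta=\nabla\rho$ forces $\lambda(z^\ast)=p(z^\ast)$ (both equal the common Euclidean gradient length, by the eikonal relations $|\nabla\rho|_{ds^2}=|\nabla\delta|_{ds_c^2}=1$), so, since $\delta$ is the distance function of the curvature-$(-1)$ metric $ds_c^2$ and hence $\Delta_{ds_c^2}\delta=\coth\delta$, the conformal change of the Laplacian with $\lambda(z^\ast)=p(z^\ast)$ yields $\Delta_{ds^2}\delta(z^\ast)=\coth\delta(z^\ast)$. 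Combining with $\Delta_{ds^2}\rho\le\coth\rho$ gives $\Delta_{ds^2}(\delta-\rho)(z^\ast)\ge\coth\delta(z^\ast)-\coth\rho(z^\ast)$, which is \emph{negative} when $\delta(z^\ast)>\rho(z^\ast)$ because $\coth$ is decreasing---consistent with, rather than contradicting, the maximality. Thus the trace (Laplacian) comparison alone is too weak, and one must instead run the generalized Omori--Yau maximum principle on the ratio $ds_c^2/ds^2$ of the two metrics (equivalently, use the full Hessian comparison together with the extremality of the constant-curvature model), using the geodesic-distance bound to control the relevant quantity near $\partial B_R$ in the absence of completeness. Securing this boundary control is the crux; once it is in place the comparison gives $\rho\ge\delta$ on $B_R$, and with the trivial exterior estimate the inequality holds on all of $\{|z|\le r\}$.
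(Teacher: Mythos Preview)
The paper does not prove this statement at all: it is recorded as a \textsc{Fact} with the citation \cite[Lemma 2.1]{OR} and used as a black box in the proof of Lemma~\ref{sublemma}. So there is no ``paper's own proof'' to compare against; any argument you supply is supplementary.

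That said, your proposal does not actually contain a proof. You set up the reduction to the geodesic ball $B_R$ correctly and you honestly report that the direct maximum-principle attack on $\delta-\rho$ fails (the sign of $\Delta_{ds^2}(\delta-\rho)$ at an interior maximum goes the wrong way). But your resolution---``run the generalized Omori--Yau maximum principle on the ratio $ds_c^2/ds^2$\ldots using the geodesic-distance bound to control the relevant quantity near $\partial B_R$''---is a description of a hoped-for strategy, not an argument. You have not said what quantity you are maximizing, why it is bounded near $\partial B_R$, or how the Hessian comparison closes where the Laplacian comparison did not. Since $(B_R,ds^2)$ is incomplete, the Omori--Yau machinery does not apply off the shelf, and ``securing this boundary control is the crux'' is precisely the step you leave undone.

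If you want to complete this, the Osserman--Ru argument in \cite{OR} proceeds rather differently: it does not try to compare the two distance functions via a maximum principle on their difference, but instead uses the curvature bounds to control the conformal factor $\lambda$ directly (an Ahlfors--Schwarz type estimate from $K_{ds^2}\ge -1$, combined with the upper bound $K_{ds^2}\le 0$ to get geodesic convexity), and then integrates along radial paths. That route avoids the sign obstruction you ran into.
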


The first author formulates the following lemma with reference to \cite[Lemma 8.5 and Theorem 11.1]{Oss1986}.  

\begin{lemma} \label{lem:point curvature est.}
For any $L>0$ and for each $m \in \Z_{>0}$, there exists $C=C(L,m)>0$ such that for any Weierstrass $m$-triple $(D(0;R), \, f \, dz, \, g)$ satisfying $|g| < L$ on $D(0;R)$, we have
\begin{align*}
|K_{ds^2}(0)|^{\frac{1}{2}} \leq \dfrac{C}{d(0)}.
\end{align*}
Here, $K_{ds^2}(0)$ is the Gaussian curvature with respect to the metric $ds^2$ given by \eqref{eq:m-metric} at $0 \in D(0;R)$ and $d(0)$ is the geodesic distance from $0$ to the boundary $\partial D(0;R)$. 
\end{lemma}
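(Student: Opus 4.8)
The plan is to bound the two factors on the left separately, each by a Schwarz-type estimate: $|K_{ds^2}(0)|^{1/2}$ via the boundedness of $g$, and the geodesic distance $d(0)$ via a flat comparison metric together with a developing-map argument in the spirit of \cite[Lemma 8.5]{Oss1986}. First I would record the structural consequence of the hypothesis: since $|g|<L$ on $D(0;R)$, the meromorphic function $g$ has no poles, so its polar divisor $(g)_\infty$ vanishes, and hence by the regularity condition \eqref{eq:regularity} the holomorphic $1$-form $f\,dz$ has no zeros, i.e.\ $f$ never vanishes on $D(0;R)$. Thus $\sigma^2:=|f|^2\,|dz|^2$ is a smooth flat conformal metric on $D(0;R)$ (its Gaussian curvature vanishes since $\log|f|$ is harmonic), and from $1\le(1+|g|^2)^m<(1+L^2)^m$ we obtain the two-sided comparison $\sigma^2\le ds^2\le(1+L^2)^m\sigma^2$. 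Writing $d_\sigma(0)$ for the $\sigma$-geodesic distance from $0$ to $\partial D(0;R)$, the upper comparison gives $d(0)\le(1+L^2)^{m/2}\,d_\sigma(0)$, so it suffices to prove the two inequalities
\begin{align*}
|K_{ds^2}(0)|^{1/2}\le\frac{\sqrt{2m}\,L}{R\,|f(0)|}
\qquad\text{and}\qquad
d_\sigma(0)\le R\,|f(0)|\,;
\end{align*}
combining them with the comparison then yields $|K_{ds^2}(0)|^{1/2}\le \sqrt{2m}\,L\,(1+L^2)^{m/2}/d(0)$, so one may take $C(L,m)=\sqrt{2m}\,L\,(1+L^2)^{m/2}$.

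The first inequality is routine. By the curvature formula \eqref{eq:Gauss curvature of m-Riemann surf.},
\[
|K_{ds^2}(0)|^{1/2}=\frac{\sqrt{2m}\,|g'(0)|}{(1+|g(0)|^2)^{(m+2)/2}\,|f(0)|}\le\frac{\sqrt{2m}\,|g'(0)|}{|f(0)|},
\]
and since $g$ maps $D(0;R)$ holomorphically into the disk $\{\,|w|<L\,\}$, the Schwarz--Pick lemma applied to $\zeta\mapsto L^{-1}g(R\zeta)$ on $\D$ gives $|g'(0)|\le (L^2-|g(0)|^2)/(LR)\le L/R$, which is exactly what is needed.

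The substantive step is $d_\sigma(0)\le R\,|f(0)|$, which is the developing-map part. Set $F(z):=\int_0^z f(w)\,dw$ on $D(0;R)$; then $F$ is holomorphic with $F(0)=0$, $F'=f\ne 0$, and $\sigma^2=F^{\ast}(|dw|^2)$, so $F$ is a local isometry from $(D(0;R),\sigma^2)$ onto its image in the Euclidean plane. Fix $\rho$ with $0<\rho<d_\sigma(0)$, so that the geodesic ball $B_\sigma(0;\rho)$ is relatively compact in $D(0;R)$. The key point is that $F$ maps $B_\sigma(0;\rho)$ biholomorphically onto the Euclidean disk $D(0;\rho)$; equivalently, the branch of $F^{-1}$ with $F^{-1}(0)=0$ extends univalently over all of $D(0;\rho)$ with values in $D(0;R)$. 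To see this, continue this branch along the radial segment from $0$ to a point $w\in D(0;\rho)$: the lifted path $\gamma$ has $\sigma$-length equal to the length of the segment, namely $|w|<\rho<d_\sigma(0)$, so $\gamma$ remains in a compact subset of $D(0;R)$ and the continuation never breaks down, while two such lifts meet only when the corresponding rays coincide, so the union of these lifts is an open subset of $D(0;R)$ on which $F$ is injective and maps onto $D(0;\rho)$. Once this is granted, $F^{-1}\colon D(0;\rho)\to D(0;R)$ is univalent with $(F^{-1})'(0)=1/f(0)$, and the Schwarz lemma applied to $z\mapsto R^{-1}F^{-1}(\rho z)$ gives $1/|f(0)|\le R/\rho$, i.e.\ $\rho\le R\,|f(0)|$; letting $\rho\uparrow d_\sigma(0)$ finishes the estimate (and forces $d_\sigma(0)<\infty$, so that $d(0)<\infty$ and the assertion is meaningful).

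I expect the univalence claim in the last step to be the main obstacle: namely, that the local inverse of the developing map extends single-valuedly over the whole Euclidean disk $D(0;\rho)$ for every $\rho<d_\sigma(0)$, equivalently that the geodesic ball $B_\sigma(0;\rho)$ neither reaches the ideal boundary of $(D(0;R),\sigma^2)$ nor ``wraps around'' before radius $d_\sigma(0)$. This is exactly the content of the Osserman-type developing-map lemma and has to be argued with care; one can do so via the length identity for lifts sketched above, or, more geometrically, by observing that since $\overline{B_\sigma(0;\rho)}$ is compact in $D(0;R)$ and $\sigma^2$ is smooth and nonsingular there, the exponential map $\exp_0$ is defined and injective on the $\rho$-ball of $T_0$ (flatness gives no conjugate points, and ``completeness out to radius $\rho$'' rules out a short geodesic loop) and is therefore an isometry onto $B_\sigma(0;\rho)$, whence $F$ carries $B_\sigma(0;\rho)$ isometrically onto the Euclidean $\rho$-ball. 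Everything else --- the two Schwarz-type estimates and the metric comparison --- is elementary.
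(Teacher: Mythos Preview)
Your proof is correct and follows essentially the same strategy as the paper: both use the regularity condition to see that $f$ has no zeros, apply the Schwarz--Pick lemma to $g\colon D(0;R)\to D(0;L)$ to bound $|g'(0)|\le L/R$, and combine this with the developing map $F(z)=\int_0^z f$ and the Schwarz lemma on its inverse to obtain $d(0)\le (1+L^2)^{m/2}R\,|f(0)|$, yielding the same constant $C=\sqrt{2m}\,L\,(1+L^2)^{m/2}$. The only cosmetic difference is in the developing-map step: the paper takes the \emph{maximal} radius $\rho$ on which $G=F^{-1}$ extends and shows that the radial path to a boundary point where extension fails lifts to a divergent curve in $D(0;R)$ of $|f|\,|dz|$-length $\rho$, whereas you argue the contrapositive, lifting radial segments of length $<d_\sigma(0)$ to conclude that $F^{-1}$ extends over every $D(0;\rho)$ with $\rho<d_\sigma(0)$---these are the same Osserman argument viewed from opposite ends.
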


\begin{proof}
For any $L > 0$ and for any $m \in \Z_{>0}$, set $C := \sqrt{2 \, m} \, L \, (1+L^2)^{\frac{m}{2}} \, (>0)$. 
It is sufficient to show that $|K_{ds^2}(0)|^{\frac{1}{2}} \, d(0) \leq C$ for any Weierstrass $m$-triple $(D(0;R), \, f \, dz, \, g)$ satisfying $|g| < L$ on $D(0;R)$. 
So we take an arbitrary Weierstrass $m$-triple $(D(0;R), \, f \, dz, \, g)$ satisfying $|g| < L$ on $D(0;R)$. 
For any divergent path $\gamma$ starting from the origin in $D(0;R)$, we have
\begin{align} \label{eq:d(0)}
d(0) 
= \inf_{\gamma} \int_\gamma ds 
\leq \int_\gamma (1+|g|^2)^{\frac{m}{2}} \, |f| \, |dz| 
\leq (1+L^2)^{\frac{m}{2}} \int_\gamma |f(z)| \, |dz|. 
\end{align}

We next construct the divergent path. By the Cauchy integral theorem, the line integral of $f$ connecting $0$ and $z \in D(0;R)$ does not depend on paths. 
Then we define $F:D(0;R)  \longrightarrow \C \; ; \; w = F(z) := \int_0^z f(\zeta) \, d\zeta$. The function $F$ is a holomorphic function satisfying $F(0)=0$ and $F'=f$ on $D(0;R)$. 
By $|g|<L$ on $D(0;R)$ and \eqref{eq:regularity}, $f$ has no zeros on $D(0;R)$. 
Since $F'=f \ne 0$ on $D(0;R)$, $F$ is locally univalent on $D(0;R)$, that is, 
\begin{align*}
(LU)\quad ^\forall z \in D(0;R), \, ^\exists U\,:\,\mathrm{open \; neighborhood \; of \;}z \;\, \mathrm{s.t.} \;\, F|_U:U \longrightarrow F(U)\,: \mathrm{conformal}.
\end{align*}
By applying the Liouville theorem to the inverse map of $F$, there exists $\rho>0$ such that $V_0:=\{ w \in \C\, : |w|< \rho \}$ is the largest disk satisfying the condition $(LU)$. 
Let $U_0:=F^{-1}(V_0)$, $G:V_0 \longrightarrow U_0 \; ; \; z = G(w)$ be the inverse function of $F$ and $w_0 \in \C$ be the point such that $|w_0|=\rho$ and $G$ cannot be extended at $w_0$. 
Moreover, we set the curves
\begin{align*}
l : [0,1) \longrightarrow V_0 \; ; \;\gamma(t) = t \, w_0, \quad 
C : [0,1) \longrightarrow U_0 \; ; \; G \circ \gamma(t) = G(t \, w_0).
\end{align*}

Then we can show that $C$ is a divergent path in $D(0;R)$. 
In fact, suppose that $C$ is not a divergent path in $D(0;R)$, that is, there exists a compact subset $K_0$ in $D(0;R)$ such that $(G \circ \gamma)([0,1)) \subset K_0$. 
We choose $\{ t_n \}_{n=1}^\infty \subset [0,1)$ with $t_n \rightarrow 1$ (as $n \rightarrow \infty$) and we consider $\{(G \circ \gamma)(t_n)\}_{n=1}^\infty (\subset (G \circ \gamma)([0,1)) \subset K_0)$. 
Since $K_0$ is compact, by taking a subsequence if necessary, there exists $z_0 \in D(0;R)$ such that $(G \circ \gamma)(t_n) \rightarrow z_0$. 
Since $F$ is continuous at $z_0 \in D(0;R)$, we have 
\begin{align*}
\lim_{n \rightarrow \infty} F((G \circ \gamma)(t_n))=F(\lim_{n \rightarrow \infty}(G \circ \gamma)(t_n))=F(z_0).
\end{align*}
On the other hand, by $F \circ G = \mathrm{id}$, we obtain 
\begin{align*}
\lim_{n \rightarrow \infty} F((G \circ \gamma)(t_n)) 
= \lim_{n \rightarrow \infty} \gamma(t_n) = w_0.
\end{align*}
We thereby have $F(z_0) = w_0$. 
Moreover, since $F:\D \longrightarrow \C$ is an open map and $\overline{U_0}$ is a closed set which contains $\{ (G \circ \gamma)(t_n) \}_{n=1}^\infty$, we see that $\overline{U_0} \cap F^{-1}(w_0)=\{z_0\}$. 
This contradicts the definition of $w_0$. 

From $G'=1/F' = 1/f$ and $|\gamma'(t)|=|w_0|=\rho$, we have  
\begin{align*}
\int_C |f(z)| \, |dz|
=\int_0^1 |f(G \circ \gamma(t))| \, \left| \dfrac{dG}{dw}(\gamma(t)) \right| \, \left| \dfrac{d\gamma}{dt}(t) \right| \, dt 
= \rho.
\end{align*}
By \eqref{eq:d(0)}, we have
\begin{align} \label{eq:second d(0)}
d(0) \leq (1+L^2)^{\frac{m}{2}} \, \rho.
\end{align}
Applying the Schwarz lemma to $G:D(0;\rho) \longrightarrow D(0;R)$, 
\begin{align} \label{eq:G'(0)}
|G'(0)| \leq \dfrac{R}{\rho}.
\end{align}
By using $f(0)=1/G'(0)$, \eqref{eq:G'(0)} and \eqref{eq:second d(0)} in this order, we obtain
\begin{align} \label{eq:final d(0)}
(1+L^2)^{\frac{m}{2}} \, |f(0)| 
= (1+L^2)^{\frac{m}{2}} \, \dfrac{1}{|G'(0)|}
\geq (1+L^2)^{\frac{m}{2}} \, \dfrac{\rho}{R}
\geq \dfrac{d(0)}{R}.
\end{align}
Applying the Schwarz--Pick lemma to $g:D(0;R) \longrightarrow D(0;L)$, we also have
\begin{align} \label{eq:g'(0)}
|g'(0)| \leq \dfrac{L}{R}, \quad \mathrm{i.e.,} \quad |g'(0)| \, R \leq L.
\end{align}
By using \eqref{eq:Gauss curvature of m-Riemann surf.}, \eqref{eq:final d(0)}, \eqref{eq:g'(0)} and $1 \leq (1+|g(0)|^2)^{\frac{m}{2}+1}$ in this order, we therefore obtain
\begin{align*}
|K_{ds^2}(0)|^{\frac{1}{2}} \, d(0) 
& = \dfrac{\sqrt{2 \, m} \, |g'(0)|}{(1+|g(0)|^2)^{\frac{m}{2}+1} \, |f(0)|} \, d(0) \\
& \leq \dfrac{\sqrt{2 \, m} \, |g'(0)|}{(1+|g(0)|^2)^{\frac{m}{2}+1}} \, R \, (1+L^2)^\frac{m}{2} \\
& \leq \dfrac{\sqrt{2 \, m} \, L \, (1+L^2)^\frac{m}{2}}{(1+|g(0)|^2)^{\frac{m}{2}+1}} \\
& \leq \sqrt{2 \, m} \, L \, (1+L^2)^\frac{m}{2} \\
& = C.
\end{align*}
\end{proof}

By generalizing \cite[Lemma 6]{RA} to Weierstrass $m$-triples, we obtain the following lemma. 

\begin{lemma} \label{lem:conv of m-Riemann surf}
Let $\{ (\Sigma,f_ndz,g_n) \}_{n=1}^\infty$ be a sequence of Weierstrass $m$-triples 
and $\{ K_{ds_n^2} \}_{n=1}^{\infty}$ be the sequence of the Gaussian curvatures of the metric $ds_n^2 = (1 + |g_n|^2)^m \, |f_n|^2 \, |dz|^2$. 
Suppose that there exists a nonconstant meromorphic function $g \in \M(\Sigma)$ such that $g_n \stackrel{\mathrm{sph.\;loc.}}{\rightrightarrows} g \; \mathrm{on} \;\Sigma$ and $\{ K_{ds_n^2} \}_{n=1}^\infty$ is uniformly bounded on $\Sigma$. 
Then one of the following is true: 
\begin{enumerate}
\item[(\hspace{.18em}i\hspace{.18em})] There exists a subsequence $\left\{ K_{ds_{n_k}^2} \right\}_{k=1}^\infty$ of $\{ K_{ds_n^2} \}_{n=1}^\infty$ such that $K_{ds_{n_k}^2} \stackrel{\mathrm{loc.}}{\rightrightarrows} 0 \; \mathrm{on} \;\Sigma$.  
\item[(\hspace{.08em}ii\hspace{.08em})] There exists a subsequence $\{ ds_{n_k}^2 \}_{k=1}^\infty$ of $\{ ds^2_n \}_{n=1}^\infty$ and a holomorphic $1$-form $f \, dz$ on $\Sigma$ such that $ds^2 := (1+|g|^2)^m \, |f|^2 \, |dz|^2$ is a conformal metric on $\Sigma$ and $ds_{n_k}^2 \stackrel{\mathrm{loc.}}{\rightrightarrows} ds^2 \; \mathrm{on} \;\Sigma$.
\end{enumerate}
\end{lemma}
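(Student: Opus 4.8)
The plan is to work locally on coordinate disks and then patch, using the hypothesis $g_n \stackrel{\mathrm{sph.\;loc.}}{\rightrightarrows} g$ together with the uniform bound $|K_{ds_n^2}| \leq M$ on $\Sigma$. Fix a point $p \in \Sigma$. Since $g$ is nonconstant, we may shrink to a coordinate disk $D = D(p;2r)$ on which $g$ is holomorphic (finite) and $g' \not\equiv 0$; by Fact \ref{fact:conv of mero}(i) and Hurwitz, for large $n$ the $g_n$ are holomorphic on $D$ and $g_n \rightrightarrows g$, $g_n' \rightrightarrows g'$ uniformly on $\overline{D(p;r)}$. On $D$ the metric is $ds_n^2 = (\lambda_n)^2 |dz|^2$ with $\lambda_n = (1+|g_n|^2)^{m/2}|f_n|$, and from \eqref{eq:Gauss curvature of m-Riemann surf.} we read off
\begin{align*}
|f_n| = \frac{\sqrt{2m}\,|g_n'|}{(1+|g_n|^2)^{\frac{m}{2}+1}\,|K_{ds_n^2}|^{\frac12}}
\end{align*}
wherever $g_n' \neq 0$ and $K_{ds_n^2} \neq 0$.

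First I would perform the dichotomy by testing whether $|f_n|$ stays bounded. Concretely, pick a point $z_0 \in D(p;r)$ with $g'(z_0)\neq 0$; then $|g_n'(z_0)| \to |g'(z_0)| > 0$. Two cases: either $\liminf_n |f_n(z_0)| = 0$ along some subsequence, or $|f_n(z_0)|$ is bounded away from $0$ and $\infty$ along a subsequence. In the first case, along that subsequence $|K_{ds_n^2}(z_0)|^{1/2}$ is comparable to $|g_n'(z_0)|/((1+|g_n(z_0)|^2)^{m/2+1}|f_n(z_0)|) \to \infty$, which contradicts the uniform bound $|K_{ds_n^2}| \leq M$ — unless in fact $f_n(z_0) \to 0$ forces a genuinely degenerating metric. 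The correct way to organize this: consider $a_n := |f_n(p)|$ (or at a fixed base point). If $a_n \to 0$ along a subsequence, I claim alternative (i) holds; if $a_n$ has a subsequence bounded in $(0,\infty)$, alternative (ii) holds. The point is that $f_n\,dz$ are holomorphic $1$-forms with $f_n$ zero-free (by \eqref{eq:regularity}, since here $g_n$ is finite on $D$), so $\log|f_n|$ is harmonic on $D$; a Harnack/normal-families argument on the harmonic functions $\log|f_n|$ shows that either $\log|f_n| \to -\infty$ locally uniformly, or a subsequence of $\log|f_n|$ converges locally uniformly to a harmonic function $u$, and then $f_n$ converges locally uniformly (after fixing phases, or simply $|f_n| \to e^u$) to a zero-free holomorphic $f$ with $|f| = e^u$.

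In the convergent case I would set $f\,dz$ to be the resulting limiting holomorphic $1$-form (the local zero-free holomorphic limits patch because they are forced by the harmonic limit of $\log|f_n|$ up to a locally constant unit, and $ds^2$ only sees $|f|$), define $ds^2 := (1+|g|^2)^m|f|^2|dz|^2$, and check $ds_{n_k}^2 \to ds^2$ locally uniformly: this is immediate since $g_{n_k} \to g$ and $|f_{n_k}| \to |f|$ locally uniformly, so the densities $\lambda_{n_k} \to \lambda$ locally uniformly. Near a point where $g(q)=\infty$ one argues symmetrically using $1/g_n$ and Fact \ref{fact:conv of mero}(ii), noting that \eqref{eq:regularity} makes $(1+|g_n|^2)^{m/2}|f_n|$ the genuinely well-behaved quantity across poles. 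In the divergent case, $|f_{n_k}| \to 0$ locally uniformly on $D$, hence $\lambda_{n_k} \to 0$ locally uniformly, and from the displayed formula $|K_{ds_{n_k}^2}|^{1/2} = \sqrt{2m}\,|g_{n_k}'|/((1+|g_{n_k}|^2)^{m/2+1}|f_{n_k}|)$ this would blow up — contradicting $|K_{ds_n^2}| \leq M$ — \emph{unless} $K_{ds_{n_k}^2} \to 0$; so in fact one must argue differently: the bound $|K_{ds_n^2}| \le M$ forces $|f_n| \ge \sqrt{2m}\,|g_n'|/(M^{1/2}(1+|g_n|^2)^{m/2+1})$, so $|f_n(z_0)|$ \emph{cannot} tend to $0$ at a point where $g'(z_0) \ne 0$. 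Therefore the divergent-$\log|f_n|$ case can only occur where $g' \equiv 0$, i.e. nowhere (as $g$ is nonconstant and $g'$ has isolated zeros), which actually rules it out entirely on a dense open set and hence, by the harmonicity of $\log|f_n|$ and a diagonal argument, shows $\{|f_n|\}$ is locally bounded below and above away from the zeros of $g'$; one then gets global convergence on all of $\Sigma$ after extracting a subsequence. Reconciling these two observations — that the curvature bound both prevents $|f_n| \to 0$ and yet alternative (i) must be available — is the crux: the resolution is that alternative (i) is the case where $|f_n| \to \infty$ locally uniformly (the metrics blow up, curvature $\to 0$ since curvature scales like $1/\lambda^2$ times bounded data), and alternative (ii) is the bounded case.

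The main obstacle I anticipate is precisely this bookkeeping of the dichotomy: showing that the only two possibilities for the zero-free holomorphic family $\{f_n\}$ — after using the curvature bound to exclude $|f_n|\to 0$ — are ``$|f_n| \to \infty$ locally uniformly'' (giving (i) via $|K| = 2m|g'|^2/((1+|g|^2)^{m+2}|f|^2) \to 0$) or ``a subsequence of $|f_n|$ converges locally uniformly to a positive limit'' (giving (ii)), and that these alternatives are consistent across different coordinate patches so as to yield a global subsequence and a global $1$-form $f\,dz$. The patching is handled by a standard diagonal extraction over an exhaustion of $\Sigma$ by coordinate disks together with the fact that a locally-uniform limit of the densities $\lambda_n$ determines $|f|$ unambiguously, and near poles of $g$ one transfers to $1/g_n$; the rest is the routine verification that the limiting $ds^2$ is a genuine conformal metric, i.e. $(f\,dz)_0 = m(g)_\infty$, which follows from \eqref{eq:regularity} applied to the limit.
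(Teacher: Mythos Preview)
Your approach, once you self-correct midway through, is exactly the paper's: the curvature bound $|K_{ds_n^2}| \leq M$ combined with $g_n' \to g' \not\equiv 0$ forces a local lower bound $|f_n| \geq c > 0$ on the complement $\Sigma_1$ of the poles and ramification points of $g$, so $\{1/f_n\}$ is locally bounded; Montel (not Harnack on $\log|f_n|$, though that works too) then gives the dichotomy that a subsequence of $f_n\,dz$ either diverges to $\infty$ on $\Sigma_1$ (yielding (i), since $|K_{ds_n^2}| = 2m|g_n'|^2/((1+|g_n|^2)^{m+2}|f_n|^2) \to 0$) or converges to a zero-free holomorphic $1$-form on $\Sigma_1$ (yielding (ii)). The paper then extends each alternative across the discrete set $\Sigma \setminus \Sigma_1$ by the maximum modulus principle (using $g_n^m f_n$ near poles of $g$) and, in case (ii), verifies $(f\,dz)_0 = m(g)_\infty$ via the Hurwitz theorem --- steps you correctly flag but only sketch.

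Your ``fixing phases'' detour is a red herring and hints at a misconception about the patching. Applying Montel directly to the holomorphic family $\{1/f_n\}$ (or to $\{f_n\}$ once locally bounded above) produces a genuine holomorphic limit on each chart; since each $f_n\,dz$ is already a global holomorphic $1$-form, the local limits along a \emph{single} diagonally extracted subsequence agree on overlaps automatically --- there is no unit-modulus cocycle to trivialize, and one does not need the escape hatch ``$ds^2$ only sees $|f|$''. Also note your first instinct (that $|f_n| \to 0$ corresponds to (i)) was wrong and is in fact excluded by the curvature bound, as you eventually observe; the genuine case (i) is $|f_n| \to \infty$.
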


\begin{proof}
Since $K_{ds_n^2}$ is nonpositive on $\Sigma$ and uniformly bounded on $\Sigma$, by rescaling if necessary, we may assume that $-1 \leq K_{ds_n^2}(p) \leq 0$ for each $n \in \Z_{>0}$ and for any $p \in \Sigma$. 
Let $(\Omega_p,z)$ be a complex local coordinate centered at $p$. 
Take an arbitrary point $p$ where $g$ is unbranched and $g(p) \ne \infty$. 
By Fact \ref{fact:conv of mero}, there exist a relatively compact neighborhood $\Omega_p$ of $p$ and $N \in \Z_{>0}$ such that $g_n \; (n \geq N)$ and $g$ are bounded holomorphic functions without ramification points on $\Omega_p$ and $g_n \rightrightarrows g \; \mathrm{on} \; \Omega_p$.
Remark that $2 \, m \, |g'|^2/(1+|g|^2)^{m+2} > 0$ on $\Omega_p$, that is, there exists $\varepsilon > 0$ such that 
\begin{align*}
\dfrac{2 \, m \, |g'|^2}{(1+|g|^2)^{m+2}} \geq 2 \, \varepsilon^2.
\end{align*}
We also note that $2 \, m \, |g_n'|^2/(1+|g_n|^2)^{m+2} \rightrightarrows 2 \, m \, |g'|^2/(1+|g|^2)^{m+2} \; \mathrm{on} \; \Omega_p$. 
Hence there exists $N' \in \Z_{\geq N}$ such that for any $n \geq N'$,
\begin{align*}
\dfrac{2 \, m \, |g_n'|^2}{(1+|g_n|^2)^{m+2}} \geq \varepsilon^2 \quad \mathrm{on} \; \Omega_p.
\end{align*}
Furthermore, $f_n \; (n \geq N')$ has no zeros on $\Omega_p$ by \eqref{eq:regularity}. 
So we see that for any $n \geq N'$,
\begin{align*} 
\dfrac{\varepsilon^2}{|f_n|^2} 
\leq \dfrac{2 \, m \, |g_n'|^2}{(1+|g_n|^2)^{m+2} \, |f_n|^2} 
= |K_{ds_n^2}| 
\leq 1, \quad
\mathrm{i.e.,} \quad \dfrac{1}{|f_n|} \leq \dfrac{1}{\varepsilon} \quad \mathrm{on} \; \Omega_p.
\end{align*}
Applying the Montel theorem and the Hurwitz theorem, the sequence $\{ f_n \}_{n=1}^\infty$ of holomorphic functions is normal on $\Omega_p$. 
More precisely, $\{ f_n \}_{n=1}^\infty$ contains a subsequence that converges locally uniformly to either a holomorphic function without zeros on $\Omega_p$ or $\infty$. 
So we define 
\begin{align*}
\Sigma_1 
:= \{ p \in \Sigma : p\;\mathrm{is \; neither \; a \; pole \; of} \; g  \; \mathrm{ nor \; a \; ramification \; point \; of}\; g \} \subset \Sigma.
\end{align*}
Then the sequence $\{ f_n \, dz \}_{n=1}^\infty$ of holomorphic $1$-forms on $\Sigma$ is normal on $\Sigma_1$ because of Remark \ref{rem:normality of mero}. Here, we remark that 
\begin{align*}
\Sigma \setminus \Sigma_1 = \{ p \in \Sigma \, : \, g(p) = \infty \} \cup \{ p \in \Sigma \, : \, p \; \mathrm{is \; a \; ramification \; point \; of} \; g \}
\end{align*}
is a discrete set. 
By taking a subsequence if necessary, one of the following is true: 

(case \hspace{.18em}i\hspace{.18em}) $f_n \, dz \stackrel{\mathrm{loc.}}{\rightrightarrows} \infty \; \mathrm{on} \; \Sigma_1$.

(case \hspace{.08em}ii\hspace{.08em}) There exists a holomorphic $1$-form $f \, dz$ without zeros on $\Sigma_1$ such that $f_n \, dz \stackrel{\mathrm{loc.}}{\rightrightarrows} f \, dz \; \mathrm{on} \; \Sigma_1$.

We first consider (case $\rm\hspace{.18em}i\hspace{.18em}$). 
Let $p$ be a point of $\Sigma \setminus \Sigma_1$. 
Then we have the following two possibilities and we can prove $K_{ds_n^2} \stackrel{\mathrm{loc.}}{\rightrightarrows} 0 \; \mathrm{on} \;\Sigma$ in both cases. 

(case \hspace{.18em}i\hspace{.18em}-a) $g(p) \ne \infty$, that is, $p$ is a ramification point of $g$. 

(case \hspace{.18em}i\hspace{.18em}-b) $g(p) = \infty$.

In (case \hspace{.18em}i\hspace{.18em}-a), by the discreteness of $\Sigma \setminus \Sigma_1$ and Fact \ref{fact:conv of mero} $(\rm\hspace{.18em}i\hspace{.18em})$, there exist a relatively compact neighborhood $\Omega_p$ of $p$ and $n_0 \in\Z_{>0}$ such that $\overline{\Omega_p} \cap (\Sigma \setminus \Sigma_1) = \{ p \}$, $g_n \; (n \geq n_0)$ and $g$ are holomorphic on $\Omega_p$. 
By \eqref{eq:regularity}, $f_n \; (n \geq n_0)$ are holomorphic functions without zeros on $\Omega_p$.
Take a neighborhood $N_p$ of $p$ such that $\overline{N_p} \subset \Omega_p$. 
Since $\partial N_p \subset \Sigma_1$ and $\partial N_p$ is compact, 
\begin{align*}
f_n \rightrightarrows \infty \; \mathrm{on} \;\partial N_p,
\quad \mathrm{i.e.,} \quad \lim_{n \rightarrow \infty} \sup_{\partial N_p} \left\{ \dfrac{1}{|f_n|} \right\}=0.
\end{align*}
Applying the maximum modulus principle to each holomorphic function $1/f_n \; (n \geq n_0)$, we have
\begin{align*}
\sup_{\overline{N_p}} \left\{ \dfrac{1}{|f_n|} \right\}
= \max_{\overline{N_p}} \left\{ \dfrac{1}{|f_n|} \right\}
= \max_{\partial N_p} \left\{ \dfrac{1}{|f_n|} \right\}
= \sup_{\partial N_p} \left\{ \dfrac{1}{|f_n|} \right\}.
\end{align*}
We thereby obtain $\lim_{n \rightarrow \infty}\sup_{\overline{N_p}} \left\{ 1/|f_n| \right\}=0$, that is, $f_n \rightrightarrows \infty \; \mathrm{on} \;\overline{N_p}$.

In (case \rm\hspace{.18em}i\hspace{.18em}-b), by the discreteness of $\Sigma \setminus \Sigma_1$ and Fact \ref{fact:conv of mero} $(\rm\hspace{.08em}ii\hspace{.08em})$, there exist a relatively compact neighborhood $\Omega_p$ of $p$ and $n_0 \in\Z_{>0}$ such that $\overline{\Omega_p} \cap (\Sigma \setminus \Sigma_1) = \{ p \}$, $1/g_n \; (n \geq n_0)$ and $1/g$ are bounded holomorphic functions on $\Omega_p$. 
In particular, we remark that $g_n \; (n \geq n_0)$ has no zeros on $\Omega_p$. 
We also recall \eqref{eq:regularity}. 
Hence, $g_n^m \, f_n \; (n \geq n_0) $ are holomorphic functions without zeros on $\Omega_p$. 
Take a neighborhood $N_p$ of $p$ such that $\overline{N_p} \subset \Omega_p$. 
Since $\partial N_p \subset \Sigma_1$ and $\partial N_p$ is compact, $g_n^m \, f_n \rightrightarrows \infty \quad \mathrm{on} \; \partial N_p$.
Applying the maximum modulus principle to each holomorphic function $1/(g_n^m \, f_n)$ ($n \geq n_0$) as in (case \hspace{.18em}i\hspace{.18em}-a), we have $g_n^m \, f_n \rightrightarrows \infty$ on $\overline{N_p}$. 

We therefore see that for any $p \in \Sigma$, there exists a neighborhood $\Omega_p$ such that $(1+|g_n|^2)^m \, |f_n|^2 \rightrightarrows \infty$ on $\Omega_p$. 
This is because $(1+|g_n|^2)^m \, |f_n|^2 \geq |f_n|^2$ and $(1+|g_n|^2)^m \, |f_n|^2 \geq |g_n|^{2m} \, |f_n|^2$ hold. 
Furthermore, by taking $\Omega_p$ sufficiently small if necessary, $|\nabla g_n|_e \rightrightarrows |\nabla g|_e \; \mathrm{on} \; \Omega_p$. 
Thus we obtain 
\begin{align*}
|K_{ds_n^2}|
= \dfrac{m \, |\nabla g_n|_e^2}{4 \, (1+|g_n|^2)^m \, |f_n|^2} \rightrightarrows 0 \quad \mathrm{on} \; \Omega_p.
\end{align*}
Since $p$ is an arbitrary point of $\Sigma$, we obtain $K_{ds_n^2} \stackrel{\mathrm{loc.}}{\rightrightarrows} 0$ on $\Sigma$.  

We next consider (case $\rm\hspace{.08em}ii\hspace{.08em}$). 
In the following, we will show that $f \, dz$ is a holomorphic $1$-form on $\Sigma$, $ds^2 := (1+|g|^2)^m \, |f|^2 \, |dz|^2$ is a conformal metric on $\Sigma$ and $ds_n^2 \stackrel{\mathrm{loc.}}{\rightrightarrows} ds^2 \; \mathrm{on} \;\Sigma$. 
Let $p$ be a point of $\Sigma \setminus \Sigma_1$. 
Since $\Sigma \setminus \Sigma_1$ is discrete, we may take a relatively compact neighborhood $\Omega_p$ of $p$ such that $\overline{\Omega_p} \cap (\Sigma \setminus \Sigma_1) = \{ p \}$, that is, $\overline{\Omega_p} \setminus \{ p \} \subset \Sigma_1$. 
Remark that $f$ has no zeros on $\overline{\Omega_p} \setminus \{ p \}$. 
Moreover, $f_n \rightrightarrows f \; \mathrm{on} \; \partial \Omega_p$ because $\partial \Omega_p \subset \Sigma_1$ and $\partial \Omega_p$ is compact. 
Then $\{ f_n \}_{n=1}^\infty$ is uniformly bounded on $\partial \Omega_p$, that is, there exists $M > 0$ such that for each $n \in \Z_{>0}$, $|f_n|<M$ on $\partial \Omega_p$. 
Applying the maximum modulus principle to each $f_n$, for any $n \in \Z_{>0}$, $|f_n|<M$ on $\overline{\Omega_p}$. 
By the Montel theorem, $\{ f_n \}_{n=1}^\infty$ is normal on $\Omega_p$. 
Since $p$ is an arbitrary point of $\Sigma \setminus \Sigma_1$, $\{ f_n \, dz \}_{n=1}^\infty$ is normal on $\Sigma$. 
More preciously, $f_n \, dz \stackrel{\mathrm{loc.}}{\rightrightarrows} f \, dz \; \mathrm{on} \;\Sigma$ and $f \, dz$ is a holomorphic $1$-form on $\Sigma$ and has no zeros on $\Sigma_1$. 

We show the following claims (A) and $(B)$ by using the Hurwitz theorem and Fact \ref{fact:conv of mero}: 
\begin{enumerate}
\item[(A)] The function $f$ vanishes only at the poles of $g$. 
\item[(B)] If $p$ is a pole of $g$ of order $k$, then $p$ is a zero of $f$ of order $k m$.
\end{enumerate}
We first prove (A). 
We have already seen that if $f \, dz$ has zeros, these belong to $\Sigma \setminus \Sigma_1$. 
So it is sufficient to show that zeros of $f \, dz$ do not belong to the set of ramification points (except for poles) of $g$. 
Suppose that there exists a zero $p$ of $f \, dz$ such that $p$ is a ramification point of $g$ and not a pole of $g$. 
By the discreteness of $\Sigma \setminus \Sigma_1$ and Fact \ref{fact:conv of mero} $(\rm\hspace{.18em}i\hspace{.18em})$, there exist a neighborhood $\Omega_p$ of $p$ and $n_0 \in \Z_{>0}$ such that $\overline{\Omega_p} \cap (\Sigma \setminus \Sigma_1) = \{ p \}$ and $g_n \; (n \geq n_0)$ and $g$ are holomorphic on $\Omega_p$.
On the other hand, by the Hurwitz theorem, there exists $\widetilde{n_0} \in \Z_{\geq n_0}$ such that for each $n \geq \widetilde{n_0}$ 
\begin{align*}
(\mathrm{the \; order \; of \; zeros \; of} \; f_n \; \mathrm{on} \; \Omega_p )
= (\mathrm{the \; order \; of \; zeros \; of} \; f \; \mathrm{on} \; \Omega_p )
= \mathrm{ord}_{f}(p) 
\geq 1.
\end{align*}
By \eqref{eq:regularity}, $g_n \; (n \geq \widetilde{n_0})$ has poles on $\Omega_p$.
This is a contradiction. 
We next prove (B). 
Let $p \in \Sigma$ be a pole of $g$ of order $k$. 
By the discreteness of $\Sigma \setminus \Sigma_1$ and Fact \ref{fact:conv of mero} $(\rm\hspace{.08em}ii\hspace{.08em})$, there exist a relatively compact neighborhood $\Omega_p$ of $p$ and $n_0 \in \Z_{>0}$ such that $\overline{\Omega_p} \cap (\Sigma \setminus \Sigma_1) = \{ p \}$ and $1/g_n$ $(n \geq n_0)$, $1/g$ are holomorphic on $\Omega_p$ and $1/g_n \rightrightarrows 1/g \; \mathrm{on} \; \Omega_p$.
By applying the Hurwitz theorem, there exists $\widetilde{n_0} \in \Z_{\geq n_0}$ such that for any $n \geq \widetilde{n_0}$, 
\begin{align*}
\left(\mathrm{the \; order \; of \; zeros \; of} \; \dfrac{1}{g_n} \; \mathrm{on} \; \Omega_p \right) 
= \left(\mathrm{the \; order \; of \; zeros \; of} \; \dfrac{1}{g} \; \mathrm{on} \; \Omega_p \right)=\mathrm{ord}_{\frac{1}{g}}(p) 
= k.
\end{align*}
From \eqref{eq:regularity}, the order of zeros of $f_n$ ($n \geq \widetilde{n_0}$) on $\Omega_p$ is $k m$. 
Then we find that $p$ is a zero of $f$ by contradiction. 
In fact, suppose that $p$ is not a zero of $f$.
Since $f_n \rightrightarrows f \; \mathrm{on} \; \overline{\Omega_p}$, $f_n \stackrel{\mathrm{sph.}}{\rightrightarrows} f \; \mathrm{on} \; \overline{\Omega_p}$, i.e., $1/f_n \stackrel{\mathrm{sph.}}{\rightrightarrows} 1/f \; \mathrm{on} \; \overline{\Omega_p}$. 
We also remark that $1/f$ is bounded on $\overline{\Omega_p}$, that is, there exists $M > 0$ such that $1/|f| < M$ on $\overline{\Omega_p}$.
By applying Fact \ref{fact:conv}, $1/f_n \rightrightarrows 1/f \; \mathrm{on} \; \overline{\Omega_p}$, that is, there exists $n \in \Z_{\geq \widetilde{n_0}}$ such that 
\begin{align*}
  \dfrac{1}{|f_n|} \leq \dfrac{1}{|f|} + 1 < M +1 \quad \mathrm{on} \; \overline{\Omega_p}.
\end{align*}
This contradicts the claim that $f_n \; (n \geq \widetilde{n_0})$ has zeros on $\Omega_p$. 
Hence $p$ is a zero of $f$.
Furthermore, by the Hurwitz theorem, there exists $n \in \Z_{\geq \widetilde{n_0}}$ such that
\begin{align*}
\mathrm{ord}_f(p) 
= (\mathrm{the \; order \; of \; zeros \; of} \; f \; \mathrm{on} \; \Omega_p ) 
= (\mathrm{the \; order \; of \; zeros \; of} \; f_n \; \mathrm{on} \; \Omega_p )
= km.
\end{align*}

By \eqref{eq:regularity}, (A) and (B), $ds^2 = (1+|g|^2)^m \, |f|^2 \, |dz|^2$ is a conformal metric on $\Sigma$. 
We also find that for each $0 \leq l \leq m$, $g_n^l \, f_n \, dz \stackrel{\mathrm{loc.}}{\rightrightarrows} g^l \, f \, dz \; \mathrm{on} \; \Sigma$. 
Hence, for any $p \in \Sigma$, there exists a neighborhood $\Omega_p$ of $p$ such that
\begin{align*}
  g_n^l \, f_n \rightrightarrows g^l \, f \quad \mathrm{on} \;\, \Omega_p, \quad \mathrm{i.e.,} \quad |g_n|^{2l} \, |f_n|^2 \rightrightarrows |g|^{2l} \, |f|^2 \quad \mathrm{on} \;\, \Omega_p.
\end{align*}
We therefore obtain $ds_n^2 \stackrel{\mathrm{loc.}}{\rightrightarrows} ds^2 \; \mathrm{on} \; \Sigma$.
\end{proof}

Now, we give a proof of Theorem \ref{thm:m-curvature est}.
Remark that Lemma \ref{lem:point curvature est.} in this section gives only the curvature estimate at the origin.

\begin{proof}[\textit{Proof of Theorem \ref{thm:m-curvature est}}]
Let $P$ be a compact property that does not satisfy $m$-curvature estimate. 
Then there exists a sequence $\{(\Sigma_n, \, f_n \, dz,  \,g_n)\}_{n=1}^\infty$ of Weierstrass $m$-triples and points $p_n \in \Sigma_n$ such that $g_n \in \P(\Sigma_n)$ and $|K_{ds_n^2}(p_n)| \, d_n(p_n)^2 \rightarrow \infty$. 

We claim that the sequence $\{(\Sigma_n, \, f_n \, dz,  \,g_n)\}_{n=1}^\infty$ of Weierstrass $m$-triples can be chosen so that
\begin{align*} 
K_{ds_n^2}(p_n)=-\dfrac{1}{4},
\;\, -1 \leq K_{ds_n^2} \leq 0 \;\mathrm{on}\; \Sigma_n \;\;(^\forall n \in \Z_{>0})
\;\, \mathrm{and} \;\, d_n(p_n) \rightarrow \infty.
\end{align*}
In the following, we prove this claim. 
Without loss of generality, we can assume that $\Sigma_n$ is a geodesic disk centered at $p_n$. 
Set $\Sigma'_n:=\left\{ p \in \Sigma_n : d_n(p,p_n) \leq d_n(p_n)/2 \right\}$. 
Let $d'_n(p)$ be the distance from $p \in \Sigma'_n$ to the boundary of $\Sigma'_n$. 
We remark that each Gaussian curvature $K_{ds_n^2}$ is bounded on $\Sigma'_n$ and $d'_n(p) \rightarrow 0$ as $p \rightarrow \partial \Sigma'_n$. 
Then, by taking the index again if necessary, each $|K_{ds_n^2}(p)| \, d'_n(p)^2$ has a maximum at an interior point $p'_n$ of $\Sigma'_n$. 
We thereby obtain 
\begin{align*}
|K_{ds_n^2}(p'_n)| \, d'_n(p'_n)^2 
\geq |K_{ds_n^2}(p_n)| \, d'_n(p_n)^2 
= \dfrac{1}{4} \, |K_{ds_n^2}(p_n)| \, d_n(p_n)^2 
\rightarrow \infty. 
\end{align*}
So we can replace the $\Sigma_n$ by the $\Sigma'_n$ with $|K_{ds_n^2}(p'_n)| \, d'_n(p'_n)^2 \rightarrow \infty$. 
We rescale $\Sigma'_n$ to make $K_{ds_n^2}(p'_n) = -1/4$. 
By the invariance under scaling of the quantity $K(p) \, d(p)^2$, we obtain $d'_n(p'_n) = 2 \, |K_{ds_n^2}(p'_n)|^{\frac{1}{2}} \, d'_n(p'_n) \rightarrow \infty$. 
Here, without causing confusion, we use the same notation $d'_n$ to denote the geodesic distance with respect to the rescaled metric. 
We can assume again that $\Sigma'_n$ is a geodesic disk centered at $p'_n$ and let $\Sigma''_n := \left\{ p \in \Sigma'_n : d'_n(p,p'_n) \leq d'_n(p'_n)/2 \right\}$. 
Remark that $d'_n(p) \geq d'_n(p'_n)/2$ for any $p \in \Sigma''_n$. 
Then for any $p \in \Sigma''_n$, 
\begin{align*}
|K_{ds_n^2}(p)| \, \dfrac{d'_n(p'_n)^2}{4} 
\leq |K_{ds_n^2}(p)| \, d'_n(p)^2 
\leq |K_{ds_n^2}(p'_n)| \, d'_n(p'_n)^2 
= \dfrac{d'_n(p'_n)^2}{4},
\end{align*}
that is, $|K_{ds_n^2}| \leq 1$ on $\Sigma''_n$. 
Furthermore, let $d''_n(p)$ be the distance from $p \in \Sigma''_n$ to the boundary of $\Sigma''_n$. 
Then we obtain $d''_n(p'_n)=d'_n(p'_n)/2 \rightarrow \infty$ and this implies the claim. 

By taking the universal covering of each $\Sigma_n$ if necessary, we may assume that each $\Sigma_n$ is simply connected. 
From the uniformization theorem, each $\Sigma_n$ is conformally equivalent to either the unit disk $\D$ or the complex plane $\C$ and we may assume that $p_n$ maps onto $0$ for each $n \in \Z_{>0}$. 
Here we find that each $\Sigma_n$ is conformally equivalent to $\D$ by contradiction. 
Suppose that there exists $\Sigma_n$ that is conformally equivalent to $\C$. 
By Proposition \ref{lem:cpt and const}, $g_n$ is constant, that is, $K_{ds^2_n} \equiv 0$. 
This contradicts the condition $|K_{ds^2_n}(0)|=1/4$. 
Hence we consider the sequence $\{ (\D, \, f_n \, dz, \, g_n) \}_{n=1}^\infty$ of Weierstrass $m$-triples that satisfies $g_n \in \P(\D) \; (^\forall n \in \Z_{>0})$ and 
\begin{align} \label{eq:Main thm.1'}
K_{ds_n^2}(0)=-\dfrac{1}{4},
\;\, -1 \leq K_{ds_n^2} \leq 0 \; \mathrm{on} \; \D \;\; (^\forall n \in \Z_{>0})
\;\, \mathrm{and} \;\, d_n(0) \rightarrow \infty.
\end{align} 
Since $P$ is a compact property, by taking a subsequence if necessary, there exists $g \in \P(\D)$ such that $g_n \stackrel{\mathrm{sph.\;loc.}}{\rightrightarrows} g \; \mathrm{on} \;\D$. 
Then the following lemma holds:

\begin{lemma} \label{sublemma}
Let $\{ (\D, \, f_n \, dz, \, g_n) \}_{n=1}^\infty$ be a sequence of Weierstrass $m$-triples that satisfies $g_n \stackrel{\mathrm{sph.\;loc.}}{\rightrightarrows} g \; \mathrm{on} \;\D$ and \eqref{eq:Main thm.1'}, that is, 
\begin{align*}
K_{ds_n^2}(0)=-\dfrac{1}{4},
\;\, -1 \leq K_{ds_n^2} \leq 0 \; \mathrm{on} \; \D \;\; (^\forall n \in \Z_{>0}) 
\;\, \mathrm{and} \;\, d_n(0) \rightarrow \infty,
\end{align*}
where $d_n(0)$ is the geodesic distance from the origin to the boundary $\partial \D$ in the metric $ds_n^2 = (1+|g_n|^2)^m \, |f_n|^2 \, |dz|^2$. 
Then $g$ is nonconstant on $\D$.
\end{lemma}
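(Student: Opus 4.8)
The plan is to argue by contradiction: assume $g$ is constant on $\D$, say $g\equiv c$, and derive a contradiction with the hypothesis $d_n(0)\to\infty$ in \eqref{eq:Main thm.1'}. A preliminary reduction disposes of the value $c=\infty$: replacing the Weierstrass data $(\D,\,f_n\,dz,\,g_n)$ by $(\D,\,g_n^m f_n\,dz,\,1/g_n)$ again gives a Weierstrass $m$-triple — by \eqref{eq:regularity} the function $g_n^m f_n$ is holomorphic and the zero divisor of $g_n^m f_n\,dz$ equals $m\,(1/g_n)_\infty$ — with \emph{the same} metric $ds_n^2=(1+|1/g_n|^2)^m|g_n^m f_n|^2|dz|^2=(1+|g_n|^2)^m|f_n|^2|dz|^2$, hence the same Gaussian curvature and the same geodesic distance $d_n$; moreover $1/g_n\to 0$ spherically locally uniformly because the chordal distance is invariant under inversion. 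So in all cases we may assume $c\in\C$.

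Next I would extract quantitative information near the origin. Since $g(0)=c\ne\infty$, Fact \ref{fact:conv of mero}~(i) gives $r_0>0$ and $N$ such that for $n\ge N$ the functions $g_n$ and $g$ are bounded holomorphic on $D(0;r_0)$ with $g_n\rightrightarrows c$ in the Euclidean sense there; consequently $g_n'\rightrightarrows 0$ and $|\nabla g_n|_e\rightrightarrows 0$ on $D(0;r_0/2)$. Feeding this into the identity $|K_{ds_n^2}|=m\,|\nabla g_n|_e^2\big/\big(4\,(1+|g_n|^2)^m|f_n|^2\big)$, equivalently \eqref{eq:Gauss curvature of m-Riemann surf.}, evaluated at $z=0$ and using $K_{ds_n^2}(0)=-\tfrac14$, we obtain $(1+|g_n(0)|^2)^m|f_n(0)|^2=m\,|\nabla g_n|_e(0)^2\to 0$; since $(1+|g_n(0)|^2)^m\to(1+|c|^2)^m>0$, this forces $|f_n(0)|\to 0$, i.e. the metrics $ds_n^2$ collapse at the origin.

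The heart of the argument is to turn this collapse into an \emph{upper} bound on $d_n(0)$. Here I would imitate the divergent-path construction in the proof of Lemma \ref{lem:point curvature est.}: put $F_n(z)=\int_0^z f_n$, holomorphic on $\D$ with $F_n(0)=0$; $F_n$ is locally univalent away from the zeros of $f_n$, which by \eqref{eq:regularity} are exactly the poles of $g_n$ and, since $g_n\to c\in\C$ locally uniformly, lie outside every fixed compact subset of $\D$ for large $n$. Let $D(0;\rho_n)$ be the largest disk in the $w$-plane to which the branch of $F_n^{-1}$ with $F_n^{-1}(0)=0$ extends — finite by the Liouville theorem applied to the bounded map $F_n^{-1}$ — and let $C_n(t)=F_n^{-1}(t\,w_0)$ with $|w_0|=\rho_n$. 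Along $C_n$ one has $\int_{C_n}|f_n|\,|dz|=\int_0^1|w_0|\,dt=\rho_n$, while the Schwarz lemma for $F_n^{-1}\colon D(0;\rho_n)\to\D$ gives $\rho_n\le|f_n(0)|$. The factor $(1+|g_n|^2)^{m/2}$ stays bounded along $C_n$ wherever $g_n$ is near $c$, and near a pole of $g_n$ the regularity \eqref{eq:regularity} makes the blow-up of $(1+|g_n|^2)^{m/2}$ cancel the vanishing of $|f_n|$, so that $\lambda_n=(1+|g_n|^2)^{m/2}|f_n|$ stays bounded there; hence $\int_{C_n}ds_n\le C(m,c)\,\rho_n\le C(m,c)\,|f_n(0)|$. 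Once $C_n$ is seen to be (a slight modification is) a genuine divergent path in $\D$, this yields $d_n(0)\le C(m,c)\,|f_n(0)|\to 0$, contradicting $d_n(0)\to\infty$.

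The step I expect to be the main obstacle is exactly the last one: arranging that the path $C_n$ escapes every compact subset of $\D$ — rather than terminating at a pole of $g_n$ — while keeping its $ds_n^2$-length controlled uniformly in $n$. This needs a careful local analysis near the poles of $g_n$ (which can accumulate only toward $\partial\D$): there $F_n$ behaves like a branched cover, the metric $\lambda_n^2\,|dz|^2$ is comparable to the flat metric in the coordinate $\int g_n^m f_n$, and one must check that continuing the path past each such pole, and across the region where $g_n$ is no longer close to $c$, adds only a controlled amount of length, so that the total length of a divergent path issuing from $0$ stays $O(|f_n(0)|)$.
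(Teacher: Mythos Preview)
Your reduction to a finite limit $c\in\C$ and the computation forcing $|f_n(0)|\to 0$ are both correct, and the path construction via $F_n^{-1}$ is the right idea---it is essentially the content of Lemma~\ref{lem:point curvature est.}. The genuine gap is in the final step: you cannot manufacture a divergent path in $\D$ of $ds_n^2$-length $O(|f_n(0)|)$. The estimate $\int_{C_n}|f_n|\,|dz|=\rho_n\le |f_n(0)|$ is fine, but the $ds_n^2$-length carries the extra factor $(1+|g_n|^2)^{m/2}$, and this is controlled only on compact subsets of $\D$, where $g_n\rightrightarrows c$. Near $\partial\D$ the hypothesis $g_n\stackrel{\mathrm{sph.\;loc.}}{\rightrightarrows}c$ says nothing about $g_n$: it may be far from $c$, unbounded, with many poles. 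Your proposed local analysis near poles does not address the region where $|g_n|$ is merely large, and there is no mechanism forcing the length accumulated there to be $O(|f_n(0)|)$---indeed, the very hypothesis $d_n(0)\to\infty$ asserts that \emph{every} divergent path in $\D$ has $ds_n^2$-length tending to infinity, so what you are proposing to prove is exactly the negation of what you are given.

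The paper avoids this by never leaving a fixed subdisk $D(0;r)$ on which $|g_n|<L:=|c|+1$ for all large $n$. There your path argument, packaged as Lemma~\ref{lem:point curvature est.}, gives $|K_{ds_n^2}(0)|^{1/2}\,d_n(r)\le C(L,m)$, i.e.\ $d_n(r)\le 2C$---an upper bound on the distance to $|z|=r$, not to $|z|=1$. The missing ingredient is then a \emph{lower} bound on $d_n(r)$ derived from $d_n(0)\to\infty$, and this is where the curvature hypothesis $-1\le K_{ds_n^2}\le 0$ is finally used: Fact~\ref{fac:Oss-Ru's lemma} (the Osserman--Ru comparison lemma) says that a conformal metric with curvature in $[-1,0]$ whose geodesic radius is at least the hyperbolic radius dominates the hyperbolic metric in distance from the origin. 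After a rescaling $\phi_n(w)=w/r_n$ matching the hyperbolic radius to $R_n=d_n(0)$, one obtains $d_n(r)\ge\log\frac{1+r_nr}{1-r_nr}\to\log\frac{1+r}{1-r}$, and choosing $r$ with $\log\frac{1+r}{1-r}>2C$ gives the contradiction.
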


\begin{proof}[\textit{Proof of Lemma \ref{sublemma}}]
We prove this lemma by contradiction. 
Suppose that $g$ is constant on $\D$.  
By composing with a suitable M\"{o}bius transformation if $g \equiv \infty$ on $\D$, there exists $\alpha \in \C$ such that $g \equiv \alpha$ on $\D$. 
We set $L:=|\alpha|+1>0$. 
By Lemma \ref{lem:point curvature est.}, there exists a positive constant $C=C(L,m)>0$ such that for any Weierstrass $m$-triple $(D(0;\rho), \, F \, dz, \, G)$ satisfying $|G| < L$ on $D(0;\rho)$, we have
\begin{align} \label{eq:Main thm.2} 
|K_{ds^2}(0)|^{\frac{1}{2}} \leq \dfrac{C}{d(\rho)},
\end{align}
where $d(\rho)$ is the geodesic distance from the origin to the boundary $\partial D(0;\rho)$ in the metric $ds^2 = (1+|G|^2)^m \, |F|^2 \, |dz|^2$. 
For $C$ in \eqref{eq:Main thm.2}, we take $0<r<1$ with $2 \, C < \log \, \{(1+r)/(1-r)\}$ and set $R := \log \, \{(1+r)/(1-r)\}$. 
Here, remark that $R$ is the hyperbolic distance of $D(0;r)$. \par    
By Fact \ref{fact:conv}, $g_n \stackrel{\mathrm{loc.}}{\rightrightarrows} g \; \mathrm{on} \;\D$. 
In particular, $g_n \rightrightarrows g \; \mathrm{on} \;\overline{D(0;r)}$, that is, there exists $N \in \Z_{>0}$ such that for any $n \geq N$, $|g_n| \leq |g|+1=L$ on $\overline{D(0;r)}$. 
Applying  \eqref{eq:Main thm.2} to $(D(0;r), \, f_n \, dz, \, g_n) \; (n \geq N)$, for any $n \geq N$, 
\begin{align*}
|K_{ds_n^2}(0)|^{\frac{1}{2}} \, d_n(r) \leq C,
\end{align*}
where $d_n(r)$ is the geodesic distance from the origin to the boundary $\partial D(0;r)$ in the metric $ds^2_n = (1+|g_n|^2)^m \, |f_n|^2 \, |dz|^2$. 
By \eqref{eq:Main thm.1'}, we obtain 
\begin{align} \label{eq:Main thm.3}
d_n(r) \leq 2 \, C \quad(n \geq N).
\end{align}
Set $R_n := d_n(0)$, where $d_n(0)$ is the geodesic distance from the origin to the boundary $\partial \D$ in the metric $ds^2_n$. 
Moreover, let $\partial D(0;r_n)$ be the circle in $\D$ of hyperbolic radius $R_n$, that is, 
\begin{align*}
r_n := \dfrac{e^{R_n}-1}{e^{R_n}+1} \, (<1),
\quad R_n=\log\,\dfrac{1+r_n}{1-r_n}.
\end{align*}
We parametrize by $\phi_n:D(0;r_n) \longrightarrow \D\;;\;\phi_n(w):=w/r_n$. 
In the following, let $\phi^*_n(ds^2_n)$ denote the pullback of the metric $ds^2_n$ by $\phi_n$. 
Applying Fact \ref{fac:Oss-Ru's lemma} to $(D(0;r_n),\phi_n^*(ds_n^2))$, for any $w \in \partial D(0;r_nr) (\subset D(0;r_n))$, we obtain
\begin{align*}
& \mathrel{\phantom{=}} (\mathrm{the \; distance \; between \;}0 \mathrm{\; and \;} w \mathrm{\; in \; the \; metric \;}\phi_n^*(ds_n^2)) \\ 
& \geq ( \mathrm{the \; hyperbolic \; distance \; between \;}0 \mathrm{\; and \;}w ),
\end{align*}
that is,
\begin{align} \label{eq:Main thm.4}
(\mathrm{the \; geodesic \; distance \; from \;} 0 \mathrm{\; to \;}\partial D(0;r_nr) \mathrm{\; in \; the \; metric\;}\phi_n^*(ds_n^2)) 
\geq \log\,\dfrac{1 + r_n \, r}{1 - r_n \, r}.
\end{align}
Since $r_n \rightarrow 1$ (as $n \rightarrow \infty$),  $\log \, \{(1 + r_n \, r)/(1 - r_n \, r)\} \rightarrow \log \, \{(1 + r)/(1 - r)\} = R > 2 \, C$ (as $n \rightarrow \infty$). 
By \eqref{eq:Main thm.4}, there exists $n_0 \in \Z_{\geq N}$ such that for any $n \geq n_0$, 
\begin{align*}
(\mathrm{the \; geodesic \; distance \; from \;} 0 \mathrm{\; to \;}\partial D(0;r_nr) \mathrm{\; in \; the \; metric\;}\phi_n^*(ds_n^2))
> 2 \, C.
\end{align*}
Hence, for any $n \geq n_0$, 
\begin{align*}
d_n(r)
& = (\mathrm{the \; geodesic \; distance \; from \;} 0 \mathrm{\; to \;}\partial D(0;r) \mathrm{\; in \; the \; metric\;}ds_n^2) \\
& = (\mathrm{the \; geodesic \; distance \; from \;} 0 \mathrm{\; to \;}\partial D(0;r_nr) \mathrm{\; in \; the \; metric\;}\phi_n^*(ds_n^2)) \\
& > 2 \, C.
\end{align*}
This contradicts \eqref{eq:Main thm.3} and it implies $g$ is nonconstant on $\D$. 
\end{proof} 
 
We therefore find that the limit function $g$ is nonconstant on $\D$ and it implies that the hypotheses of Lemma \ref{lem:conv of m-Riemann surf} are satisfied. 
Furthermore, $\rm(\hspace{.18em}i\hspace{.18em})$ in Lemma \ref{lem:conv of m-Riemann surf} cannot happen because $|K_{ds_n^2}(0)|=1/4$ for each $n \in \Z_{>0}$. 
Thus, by taking a subsequence if necessary, there exists a holomorphic $1$-form $f \, dz$ on $\D$ such that $ds^2 = (1+|g|^2)^m \, |f|^2 \, |dz|^2$ is a conformal metric on $\D$ and $ds_n^2 \stackrel{\mathrm{loc.}}{\rightrightarrows} ds^2 \; \mathrm{on} \;\D$. 
From \eqref{eq:Main thm.1'}, 
\begin{align*}
|K_{ds^2}(0)|=\dfrac{1}{4}, \quad -1 \leq K_{ds^2} \leq 0 \; \mathrm{on} \; \D.
\end{align*}
Moreover, we find that the conformal metric $ds^2$ is complete on $\D$. 
It is sufficient to show that for any divergent path $C$ starting from the origin in $\D$, the length of $C$ in the metric $ds^2$ is infinity. 
Since this divergent path $C$ tends to the boundary in $\D$,
\begin{align*}
\int_{C} ds_n \geq 
\inf \left\{ \int_{\gamma} ds_n\;:\;\gamma\;\mathrm{is \; a \; curve \; from}\;z=0\;\mathrm{to}\;|z|=1 \right\}=d_n(0).
\end{align*}
Recall that $ds_n^2 \stackrel{\mathrm{loc.}}{\rightrightarrows} ds^2 \; \mathrm{on} \;\D$. 
As $n \rightarrow \infty$, we obtain $\int_C ds=+\infty$. 
\end{proof}

\section{Applications}\label{sec:4}

In this section, we give several applications of our main results to surface theory. 

\subsection{Gauss map of minimal surfaces in $\R^3$} \label{sbseq:4.1}
We first review some basic facts of minimal surfaces in the Euclidean $3$-space $\R^3$. 
For more details, for example, see \cite{AFL2021, Fu1993, Ko2021, Lo1980, Oss1986, Ru2023}. 
Let $\psi = (x^1, \, x^2, \, x^3) : \Sigma \longrightarrow \R^3$ be an oriented minimal surface. 
By associating a local complex coordinate $z = u + \mathrm{i} \, v$ ($\mathrm{i} := \sqrt{-1}$) 
with each positive isothermal coordinate $(u, v)$, $\Sigma$ is considered as 
a Riemann surface whose conformal 
metric is the induced metric $ds^2$ from $\R^3$. Then
\begin{align*}
\Delta_{ds^2} \, \psi = 0
\end{align*}     
holds, that is, each coordinate function $x^{i} \; (i = 1, \, 2, \, 3)$ of minimal surface $\psi$ is harmonic. 
By applying the maximum principle, there exists no compact minimal surface without boundary,
that is, $\Sigma$ is an open Riemann surface. 
For minimal surfaces, the following claim called the Enneper--Weierstrass representation formula holds. 

\begin{fact} \cite[Lemma 8.2]{Oss1986} \label{fac:representation}
Let $\Sigma$ be an open Riemann surface and $(f \, dz, \, g)$ a pair of a holomorphic $1$-form and a meromorphic function on $\Sigma$ such that 
  \begin{itemize}
    \item $(1 + |g|^2)^2 \, |f|^2 \, |dz|^2$ is a Riemannian metric on $\Sigma$ : regularity condition,
    \item for any cycle $c \in H_{1} (\Sigma, \Z)$, $\Re \, \int_c \, (1 - g^2, \, \mathrm{i} \, (1 + g^2), \, 2 \, g) \, f \, dz = 0$ : period condition.
  \end{itemize}
Then 
  \begin{align*}
    \psi(z) := \Re \, \int_{z_0}^z (1 - g^2, \, \mathrm{i} \, (1 + g^2), \, 2 \, g) \, f \, dz
  \end{align*}
is well-defined on $\Sigma$ and gives a minimal surface in $\R^3$, where $z_0 \in \Sigma$ is a base point. Conversely, all minimal surfaces are represented in this manner. Furthermore, the induced metric is given by
  \begin{align} \label{eq-metric}
    ds^2 = (1 + |g|^2)^2 \, |f|^2 \, |dz|^2.
  \end{align}
\end{fact}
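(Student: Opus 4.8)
The statement is an ``if and only if'' between Weierstrass data and minimal surfaces, plus the metric formula, so the plan is to treat the two directions separately and then read off \eqref{eq-metric} along the way. The only genuinely geometric input I will invoke is the classical fact that, in a conformal (isothermal) parametrization, a map $\psi\colon\Sigma\to\R^3$ is a minimal immersion precisely when it is a harmonic immersion; everything else is complex analysis and divisor bookkeeping.

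\textbf{From Weierstrass data to a minimal surface.} First I would form the $\C^3$-valued $1$-form
$\Phi:=\bigl(1-g^2,\ \mathrm{i}(1+g^2),\ 2g\bigr)\,f\,dz=:(\phi^1,\phi^2,\phi^3)$.
The regularity hypothesis $0<(1+|g|^2)^2|f|^2<+\infty$ is exactly the divisor condition $(f\,dz)_0=2\,(g)_\infty$, which guarantees that each $\phi^j$ extends holomorphically across the poles of $g$; hence $\Phi$ is a triple of holomorphic $1$-forms on $\Sigma$ with no common zero. A one-line algebraic identity gives $\sum_j(\phi^j)^2\equiv 0$, and the parallelogram law gives $\sum_j|\phi^j/dz|^2=2(1+|g|^2)^2|f|^2>0$. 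The period hypothesis states precisely that $\Re\int_c\Phi=0$ for every $c\in H_1(\Sigma,\Z)$, so by the homology form of Cauchy's theorem the integral $\psi(z):=\Re\int_{z_0}^z\Phi$ is independent of the path and defines a map $\Sigma\to\R^3$. Writing $z=u+\mathrm{i}v$, one has $\psi^j_u-\mathrm{i}\psi^j_v=\phi^j/dz$; then $\sum_j(\phi^j)^2=0$ yields $|\psi_u|^2=|\psi_v|^2$ and $\psi_u\cdot\psi_v=0$, while $\sum_j|\phi^j/dz|^2>0$ shows the differential is nondegenerate. Thus $\psi$ is a conformal immersion, and since each $\psi^j$ is the real part of a local holomorphic primitive of $\phi^j$, it is harmonic; by the conformal/harmonic criterion $\psi$ is minimal. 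Finally $ds^2=\tfrac12\sum_j|\phi^j/dz|^2\,|dz|^2=(1+|g|^2)^2|f|^2|dz|^2$, which is \eqref{eq-metric}.

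\textbf{From a minimal surface to Weierstrass data.} For the converse, take an oriented minimal immersion $\psi\colon\Sigma\to\R^3$ and equip $\Sigma$ with the conformal structure of the induced metric, so $\psi$ is conformal and harmonic. Set $\phi^j:=2\,\partial_z\psi^j\,dz$; harmonicity gives $\partial_{\bar z}\phi^j=0$, so the $\phi^j$ are holomorphic $1$-forms, and conformality gives $\sum_j(\phi^j)^2=0$. Provided $\phi^1-\mathrm{i}\phi^2\not\equiv 0$, I would set $f\,dz:=\phi^1-\mathrm{i}\phi^2$ and $g:=\phi^3/(\phi^1-\mathrm{i}\phi^2)$; the relation $\sum_j(\phi^j)^2=0$ then forces $\phi^1=\tfrac12(1-g^2)f\,dz$, $\phi^2=\tfrac{\mathrm{i}}{2}(1+g^2)f\,dz$, $\phi^3=g\,f\,dz$. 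Here $g$ is meromorphic, $f\,dz$ is holomorphic with zeros exactly at the poles of $g$ and of twice their order (again because $ds^2$ is an honest metric, giving regularity), and the period condition holds since $\psi=\Re\int\Phi$ is single-valued. Hence $\psi$ is recovered from $(f\,dz,g)$ up to the translation fixed by the base point $z_0$, as claimed.

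\textbf{Where the work is.} I expect two places to need care. First, the pole--zero bookkeeping: showing that regularity is \emph{equivalent} to $(f\,dz)_0=2\,(g)_\infty$, and deducing from it that $\Phi$ (resp.\ $f$) is globally holomorphic and that $ds^2$ is everywhere finite and positive; this is where the factor $(1+|g|^2)^2$ and the order-$2k$ vanishing of $f$ at an order-$k$ pole of $g$ really enter. Second, in the converse, the degenerate alternative $\phi^1-\mathrm{i}\phi^2\equiv 0$, which together with $\sum_j(\phi^j)^2=0$ pins $\psi$ into an affine line or plane; this must be ruled out by the immersion hypothesis or normalized away by composing $\psi$ with a rotation of $\R^3$ so that the standard chart is available. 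The remaining step, that a conformal harmonic immersion into $\R^3$ has vanishing mean curvature, I would simply cite.
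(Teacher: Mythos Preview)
The paper does not prove this statement; it is recorded as a \emph{Fact} with a citation to \cite[Lemma 8.2]{Oss1986} and is quoted without argument. So there is no ``paper's own proof'' to compare against. What you have written is essentially the classical proof one finds in Osserman's book: build the null holomorphic $1$-form $\Phi$, use the period condition for well-definedness, read conformality and harmonicity off the identities $\sum(\phi^j)^2=0$ and $\sum|\phi^j|^2>0$, and for the converse recover $(f\,dz,g)$ from $\phi^j=2\,\partial_z\psi^j\,dz$. Your handling of the divisor bookkeeping (regularity $\Leftrightarrow (f\,dz)_0=2(g)_\infty$) and of the degenerate chart $\phi^1-\mathrm{i}\phi^2\equiv 0$ via a rotation is also the standard treatment.

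One small normalization slip in the converse: with $\phi^j:=2\,\partial_z\psi^j\,dz$ and the statement's convention $\Phi=(1-g^2,\mathrm{i}(1+g^2),2g)\,f\,dz$, one gets $\phi^1-\mathrm{i}\phi^2=2f\,dz$, so you should set $f\,dz:=\tfrac12(\phi^1-\mathrm{i}\phi^2)$ rather than $\phi^1-\mathrm{i}\phi^2$; otherwise your recovered $(f,g)$ satisfies $\phi^j=\tfrac12(\cdots)f\,dz$ and the metric formula \eqref{eq-metric} is off by a factor of $4$. This is cosmetic and does not affect the argument.
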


Remark that for a minimal surface $\psi : \Sigma \longrightarrow \R^3$ given by $(f \, dz, \, g)$, $g$ coincides with the composition of its Gauss map and the stereographic projection $(\pi_N)^{-1} : S^2 \longrightarrow \RC$. 
We also note that if $\Sigma$ is simply connected, the period condition is automatically satisfied.

We consider Theorem \ref{thm:m-curvature est} in the case where $m = 2$. 
By Fact \ref{fac:representation} and the fact that $\D$ is simply connected, we can obtain the following criterion to determine which compact properties for the Gauss maps of minimal surfaces in $\R^3$ satisfy $2$-curvature estimate. 
This corresponds to \cite[Theorem 3]{RA}. 

\begin{theorem} \label{min-thm-1}
Let $P$ be a compact property for $\RC$-valued holomorphic maps. 
Then $\rm(\hspace{.18em}i\hspace{.18em})$ or $\rm(\hspace{.08em}ii\hspace{.08em})$ holds:
\begin{enumerate}
\item[(\hspace{.18em}i\hspace{.18em})] $P$ satisfies $2$-curvature estimate. 
\item[(\hspace{.08em}ii\hspace{.08em})] There exists a complete minimal surface $\psi \colon \D \longrightarrow \R^3$ whose Gauss map lies in $\P(\D)$ and the Gaussian curvature with respect to the metric $ds^2$ defined by \eqref{eq-metric} satisfies $|K_{ds^2}(0)|=1/4$ and $|K_{ds^2}|\leq 1$ on $\D$. 
\end{enumerate}
\end{theorem}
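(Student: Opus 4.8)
The plan is to deduce this statement directly from Theorem \ref{thm:m-curvature est} specialized to $m = 2$, using the Enneper--Weierstrass representation (Fact \ref{fac:representation}) to translate from Weierstrass $2$-triples to minimal surfaces in $\R^3$. First I would apply Theorem \ref{thm:m-curvature est} to the given compact property $P$ with $m = 2$. If alternative (i) there holds, then $P$ satisfies $2$-curvature estimate and conclusion (i) is immediate. Otherwise alternative (ii) produces a Weierstrass $2$-triple $(\D, f\,dz, g)$ for which $g \in \P(\D)$ is a nonconstant meromorphic function, $ds^2 = (1 + |g|^2)^2 \, |f|^2 \, |dz|^2$ is complete on $\D$, and $|K_{ds^2}(0)| = 1/4$ together with $|K_{ds^2}| \le 1$ on $\D$.

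Next I would realize this triple as the Weierstrass data of a complete minimal surface. Because $(\D, f\,dz, g)$ is a Weierstrass $2$-triple, $f\,dz$ is a holomorphic $1$-form, $g$ is meromorphic, and $(1+|g|^2)^2\,|f|^2\,|dz|^2$ is a Riemannian metric, so the regularity condition of Fact \ref{fac:representation} is met. Since $\D$ is simply connected, $H_1(\D, \Z) = 0$ and the period condition in Fact \ref{fac:representation} is vacuous. Hence
\[
\psi(z) := \Re \int_{z_0}^z \left(1 - g^2, \; \mathrm{i}\,(1 + g^2), \; 2\,g\right) f\,dz
\]
is a well-defined minimal surface $\psi \colon \D \longrightarrow \R^3$ whose induced metric is, by \eqref{eq-metric}, exactly $ds^2$.

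Finally I would read off the remaining assertions. The composition of the Gauss map of $\psi$ with $(\pi_N)^{-1} \colon S^2 \to \RC$ equals $g$, which lies in $\P(\D)$, so the Gauss map of $\psi$ lies in $\P(\D)$; completeness of the induced metric $ds^2$ means $\psi$ is a complete minimal surface; and since $\psi$ has induced metric $ds^2$ its Gaussian curvature is $K_{ds^2}$, whence $|K_{ds^2}(0)| = 1/4$ and $|K_{ds^2}| \le 1$ on $\D$. This is conclusion (ii).

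I do not expect a genuine obstacle here: the substance is entirely contained in Theorem \ref{thm:m-curvature est}, and the only point that needs care is the vanishing of the period condition, which holds precisely because the domain is the simply connected disk $\D$. This is exactly what lets the analytic dichotomy of Theorem \ref{thm:m-curvature est}, stated for Weierstrass $2$-triples, be upgraded without loss to a dichotomy for honest complete minimal surfaces in $\R^3$.
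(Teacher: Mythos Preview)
Your proposal is correct and follows exactly the approach indicated in the paper: the authors do not give a separate proof of this theorem but simply remark that it is Theorem~\ref{thm:m-curvature est} in the case $m=2$, combined with Fact~\ref{fac:representation} and the simple connectedness of $\D$ (which trivializes the period condition). Your write-up spells out precisely these steps.
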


Moreover, as applications of our argument in Section \ref{sbsec:3.1}, 
we obtain value distribution theoretic properties for the Gauss maps of minimal surfaces in $\R^3$. 
We first show the following Liouville-type theorem (\cite[Theorem 8.1]{Oss1986}) for the class of minimal surfaces. 
In particular, we also obtain the Bernstein theorem as a corollary of this result.

\begin{theorem} 
For any complete minimal surface $\psi : \Sigma \longrightarrow \R^3$, if the image of its Gauss map is not dense in $S^2$, then $\psi(\Sigma)$ must be a plane. 
In particular, any entire minimal graph in $\R^3$ must be a plane.   
\end{theorem}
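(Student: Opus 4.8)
The plan is to obtain this from the $m = 2$ case of the results in Section \ref{sbsec:3.1} via the Enneper--Weierstrass representation. Since both the hypothesis and the conclusion are unchanged if we replace $\psi$ by $\rho \circ \psi$ for a rotation $\rho \in SO(3)$ --- which is again a complete minimal surface, whose Gauss map is the $\rho$-image of that of $\psi$ --- I would first normalize so that the (nonempty open) complement of the Gauss image in $S^2$ contains the north pole $N = \pi_N(\infty)$. Applying Fact \ref{fac:representation} to this normalized surface, it is given by Weierstrass data $(f \, dz, g)$; the triple $(\Sigma, f \, dz, g)$ is then a Weierstrass $2$-triple, its associated metric $(1 + |g|^2)^2 \, |f|^2 \, |dz|^2$ is exactly the complete induced metric \eqref{eq-metric}, and $g$ is the composition of the Gauss map of $\psi$ with $(\pi_N)^{-1} \colon S^2 \to \RC$. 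By the normalization, $g(\Sigma)$ misses a spherical neighborhood of $\infty$, so there is $L > 0$ with $|g| < L$ on $\Sigma$; that is, $g \in \P_L(\Sigma)$.

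Now $(\Sigma, f \, dz, g)$ is a complete Weierstrass $2$-triple with bounded $g$, so Corollary \ref{cor:Bernstein} forces $g$ to be constant on $\Sigma$. By \eqref{eq:Gauss curvature of m-Riemann surf.} with $m = 2$ this means $K_{ds^2} \equiv 0$, and a minimal surface with constant Gauss map lies in a plane; undoing the rotation, $\psi(\Sigma)$ is a plane. One could equally bypass the normalization: a non-dense Gauss image omits a nonempty open subset of $\RC$, hence at least $5 = m + 3$ distinct values, so Corollary \ref{cor:Fujimoto} gives the same conclusion; I would present the argument through $P_L$, since this is the property attached to the Bernstein/uniqueness column of Figure 2.

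For the second assertion I would check the hypotheses for an entire minimal graph $\{ (x, y, u(x,y)) : (x,y) \in \R^2 \}$. Its induced metric dominates the Euclidean metric on $\R^2$, so every divergent path has infinite length and the graph is complete; and its upward unit normal $(-u_x, -u_y, 1)/\sqrt{1 + u_x^2 + u_y^2}$ has everywhere positive third coordinate, so the Gauss image lies in the open upper hemisphere and is in particular not dense in $S^2$. The first part then shows that the graph is a plane.

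The step I expect to require the most care is organizational rather than conceptual: making the rotation normalization precise --- in particular, that a rotation of $S^2$ carries a complete minimal surface to a complete minimal surface and post-composes the Gauss map by the corresponding rotation --- and confirming that the triple supplied by Fact \ref{fac:representation} is precisely the complete Weierstrass $2$-triple to which Corollary \ref{cor:Bernstein} applies. Once this bridge is in place the conclusion is immediate: completeness gives $d(p) = \infty$ for every $p \in \Sigma$, so the $2$-curvature estimate of Theorem \ref{thm:Bernstein} forces $K_{ds^2} \equiv 0$.
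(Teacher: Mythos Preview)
Your proposal is correct and follows essentially the same route as the paper: rotate so the Gauss image misses a neighborhood of the north pole, obtain $|g| < L$ from the Enneper--Weierstrass data, and apply Corollary \ref{cor:Bernstein} to the resulting complete Weierstrass $2$-triple; for the Bernstein corollary, the paper likewise invokes completeness of the graph and containment of its Gauss image in a hemisphere. Your additional remark that one could instead pass through Corollary \ref{cor:Fujimoto} (an omitted open set contains $\geq 5$ points) is a valid alternative, but the paper does not take that detour.
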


\begin{proof}
Suppose that the image of the Gauss map is not dense in $S^2$. 
Then there exists an open set in $S^2$ which is not intersected by the image of its Gauss map.
By a suitable rotation in $\R^3$ if necessary, we may assume that $N = (0,0,1) \in S^2$ is in this open set.
Hence there exists $L>0$ such that $|g|< L$ on $\Sigma$. 
We also remark that the metric given by \eqref{eq-metric} is complete.
By applying Corollary \ref{cor:Bernstein}, $g$ is constant on $\Sigma$, that is, the surface $\psi (\Sigma)$ is a plane in $\R^3$. 
The latter claim can be proved the facts that an entire minimal graph is complete and the image of its Gauss map is contained in a hemisphere in $S^2$.
\end{proof}

Furthermore, as an application of Corollary \ref{cor:Fujimoto}, 
we can obtain the Fujimoto theorem (\cite[Corollary 1.3]{FH1988}) for the Gauss map of complete minimal surfaces in $\R^3$.  

\begin{theorem}
The Gauss map of a nonflat complete minimal surface in $\R^3$ can omit at most $4 \, (=2+2)$ values. Furthermore, this result is optimal. 
\end{theorem}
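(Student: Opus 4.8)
The plan is to obtain both assertions by specializing Corollary~\ref{cor:Fujimoto} to $m = 2$ through the Enneper--Weierstrass representation. First I would take a nonflat complete minimal surface $\psi \colon \Sigma \longrightarrow \R^3$. Since there is no compact minimal surface without boundary, $\Sigma$ is an open Riemann surface, and by Fact~\ref{fac:representation} there is a pair $(f\,dz, g)$ of a holomorphic $1$-form and a meromorphic function on $\Sigma$ such that $ds^2 = (1+|g|^2)^2\,|f|^2\,|dz|^2$ is the induced metric and $g$ is the composition of the Gauss map of $\psi$ with $(\pi_N)^{-1}$. The regularity condition of Fact~\ref{fac:representation} is exactly \eqref{eq:regularity} with $m = 2$, so $(\Sigma, f\,dz, g)$ is a Weierstrass $2$-triple in the sense of Definition~\ref{def:m-pair}. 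Completeness of $\psi$ means $ds^2$ is complete, and nonflatness means $K_{ds^2} \not\equiv 0$ on $\Sigma$; by \eqref{eq:Gauss curvature of m-Riemann surf.} with $m = 2$ this forces $g$ to be nonconstant on $\Sigma$.

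Next I would invoke Corollary~\ref{cor:Fujimoto} with $m = 2$: since $ds^2$ is complete and $g$ is nonconstant, $g$ can omit at most $m + 2 = 4$ distinct values in $\RC$. Because $\pi_N \colon \RC \longrightarrow S^2$ is a bijection, the Gauss map of $\psi$ and $g$ omit the same number of values, so the Gauss map of $\psi$ omits at most $4\,(= 2+2)$ values. This proves the first assertion.

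For optimality I would reuse the construction from the proof of the second claim of Corollary~\ref{cor:Fujimoto} at $m = 2$. Fix three distinct points $\alpha_1, \alpha_2, \alpha_3 \in \C$, let $\Sigma$ be the universal covering of $\C \setminus \{\alpha_1, \alpha_2, \alpha_3\}$ with covering projection $\pi$, and set
\[
f\,dz := \pi^{*}\!\left( \dfrac{dz}{(z-\alpha_1)(z-\alpha_2)(z-\alpha_3)} \right), \qquad g := z \circ \pi .
\]
Since $\Sigma$ is simply connected, the period condition in Fact~\ref{fac:representation} holds automatically, so this data defines a minimal surface $\psi \colon \Sigma \longrightarrow \R^3$ with induced metric $ds^2 = (1+|g|^2)^2\,|f|^2\,|dz|^2$. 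Exactly as in Corollary~\ref{cor:Fujimoto}, the downstairs metric $(1+|z|^2)\,|dz|/\prod_{j=1}^{3}|z-\alpha_j|$ has infinite length along every path tending to one of $\alpha_1, \alpha_2, \alpha_3, \infty$, hence it is complete on $\C\setminus\{\alpha_1,\alpha_2,\alpha_3\}$, and therefore its pullback $ds^2$ to the Riemannian covering $\Sigma$ is complete as well. Moreover $g$ is nonconstant, so $\psi$ is nonflat, and its Gauss map omits precisely the four values $\alpha_1, \alpha_2, \alpha_3, \infty$. Thus the bound $4$ is attained.

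This is essentially a translation through the representation formula, so I do not expect a genuine obstacle; the only points requiring care are the period condition in the optimality example (which is why I pass to the universal covering rather than working on $\C\setminus\{\alpha_1,\alpha_2,\alpha_3\}$ directly) and the observation that both completeness of the metric and the count of omitted values are unchanged under pulling back by the covering map.
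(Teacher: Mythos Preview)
Your proof is correct and follows exactly the approach the paper intends: the paper states this theorem without an explicit proof, presenting it as a direct application of Corollary~\ref{cor:Fujimoto} with $m=2$ via the Enneper--Weierstrass representation (Fact~\ref{fac:representation}), which is precisely what you have carried out. Your care in passing to the universal covering for the optimality example, so that the period condition is automatic, is the right move and is already anticipated in the proof of Corollary~\ref{cor:Fujimoto}, where both $\C\setminus\{\alpha_1,\dots,\alpha_{m+1}\}$ and its universal covering are offered as options.
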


We remark that the second author, Kobayashi and Miyaoka \cite{KKM2008} gave a similar result for the Gauss map of a special class of complete minimal surfaces (this class is called the pseudo-algebraic minimal surfaces). 
We also note that Fujimoto \cite{Fu1992} obtained the best possible upper bound for the total weight of the totally ramified values of the Gauss map of complete minimal surfaces in $\R^3$. 
For more details and related topics on the total weight of the totally ramified values, see \cite{KW2023}. 

\subsection{Lorentzian Gauss map of maxfaces in $\L^3$} \label{sbseq:4.2} 
Maximal surfaces in the Lorentz--Minkowski 3-space ${\L}^{3}$ are closely related to  minimal surfaces in ${\R}^{3}$. 
We treat maximal surfaces with some admissible singularities, called {\it maxfaces}, as introduced by Umehara and Yamada \cite{UY2006}. 
We remark that maxfaces, non-branched generalized maximal surfaces in the sense of \cite{ER1992} and non-branched generalized maximal maps in the sense of \cite{IK2008} are all the same class of maximal surfaces. 
The Lorentz--Minkowski $3$-space ${\L}^{3}$ is the affine $3$-space ${\R}^{3}$ with the inner product 
\[
\langle \, , \, \rangle = -(dx^{1})^{2}+(dx^{2})^{2}+(dx^{3})^{2},
\]
where $(x^{1}, \, x^{2}, \, x^{3})$ is the canonical coordinate system of ${\R}^{3}$. 
We consider a fibration 
\[
p_{L}\colon {\C}^{3} \ni ({\zeta}^{1}, \, {\zeta}^{2}, \, {\zeta}^{3}) \longmapsto \text{Re} (- \mathrm{i} \, {\zeta}^{1}, \, {\zeta}^{2}, \, {\zeta}^{3}) \in {\L}^{3}. 
\]
The projection of null holomorphic immersions into  ${\L}^{3}$ by $p_{L}$ gives maxfaces. 
Here, a holomorphic map $F = (F_{1}, \, F_{2}, \, F_{3})\colon \Sigma \to {\C}^{3}$ is said to be {\it null} if $\{(F_{1})'\}^{2}+\{(F_{2})'\}^{2}+\{(F_{3})'\}^{2}$ vanishes identically, where $'=d / dz$ denotes the derivative with respect to a local complex coordinate $z$ of $\Sigma$. 
For maxfaces, the analogue of the Enneper--Weierstrass representation formula is known (see also \cite{Ko1983}). 

\begin{fact} \cite[Theorem 2.6]{UY2006} \label{max-EW}
Let $\Sigma$ be a Riemann surface and $(f \, dz, \, g)$ a pair of a holomorphic $1$-form and a meromorphic function on $\Sigma$ such that 
\begin{align}\label{max-nullmet}
d{\sigma}^{2} := (1+|g|^{2})^{2} \, |f|^{2} \, |dz|^{2}
\end{align}
gives a Riemannian metric on $\Sigma$ and $|g| \not \equiv 1$ on $\Sigma$. Assume that 
\begin{align}\label{maximal-period}
\mathrm{Re} \int_{c} (- 2 \, g, \, 1+g^{2}, \mathrm{i} \, (1-g^{2})) \, f \, dz = 0
\end{align}
for every cycle $c \in H_{1} (\Sigma, \Z)$. 
Then 
\begin{align*}
\psi(z) := \mathrm{Re} \int^{z}_{z_{0}} (- 2 \, g, 1+g^{2}, \mathrm{i} \, (1-g^{2})) \, f \, dz 
\end{align*}
is well-defined on $\Sigma$ and gives a maxface in $\L^3$, where $z_{0}\in {\Sigma}$ is a base point. 
Moreover, all maxfaces are obtained in this manner. 
The induced metric $ds^{2} := \psi^{\ast} \langle \, , \, \rangle$ is given by
\[
ds^{2} = (1-|g|^{2})^{2} \, |f|^{2} \, |dz|^2, 
\]
and a point $p \in \Sigma$ is a singular point of $\psi$ if and only if $|g(p)|=1$. 
\end{fact}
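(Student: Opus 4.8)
The plan is to exhibit $\psi$ as the image under the projection $p_{L}$ of a null holomorphic curve in $\C^3$, since by definition a maxface in $\L^3$ is precisely such a projection of a null holomorphic immersion. First I would introduce the $\C^3$-valued holomorphic $1$-form $\Phi := (-2g, \, 1+g^2, \, \mathrm{i}(1-g^2)) \, f \, dz$ and check by a direct computation that $-\Phi_1^2 + \Phi_2^2 + \Phi_3^2 = 0$, i.e. that $\Phi$ is null for the Lorentzian quadratic form; equivalently, applying to $\Phi$ the linear map built into $p_{L}$ produces a $1$-form null for the standard symmetric bilinear form on $\C^3$, so that $F := \int \Phi$ (suitably post-composed with that linear map) is a holomorphic null curve. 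The hypothesis \eqref{maximal-period} says exactly that $\mathrm{Re}\int_{z_0}^{z}\Phi$ is independent of the path of integration, hence $\psi(z) = \mathrm{Re}\int_{z_0}^{z}\Phi$ is a well-defined smooth map $\Sigma \to \L^3$ even when $\Sigma$ carries nontrivial homology, and by construction $p_{L} \circ F = \psi$.

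Next I would compute the two metrics. Writing $\psi_z$ for the $(1,0)$-part of $d\psi$, nullity of $\Phi$ gives $\langle \psi_z, \psi_z \rangle = 0$, so the coordinate $z$ is conformal for $ds^{2} = \psi^{\ast}\langle \, , \, \rangle$; and the elementary identity $|1+g^2|^2 + |1-g^2|^2 = 2(1+|g|^4)$ yields $2\langle\psi_z,\psi_{\bar z}\rangle = (1-|g|^2)^2 |f|^2$, hence $ds^{2} = (1-|g|^2)^2\,|f|^2\,|dz|^2$. The same bilinear algebra with the Euclidean form of $\C^3 \cong \R^6$ gives that the metric induced on $\Sigma$ by $F$ is a constant multiple of $d\sigma^{2}$ as in \eqref{max-nullmet}; thus the hypothesis that $d\sigma^{2}$ is Riemannian is precisely the statement that $F$ is an immersion, so $F$ is a null holomorphic immersion and $\psi = p_{L}\circ F$ is a maxface. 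Finally $d\psi$ drops rank exactly where $ds^{2}$ degenerates, i.e. where $(1-|g|^2)^2 = 0$, which identifies the singular set of $\psi$ as $\{\, |g| = 1 \,\}$; the assumption $|g|\not\equiv 1$ ensures this set is proper, so $\psi$ is a genuine maxface and not a degenerate map.

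For the converse statement that every maxface arises this way, I would start from a maxface $\psi$, invoke the definition to get its lift to a null holomorphic immersion $F\colon \Sigma \to \C^3$, set $\Phi = dF$, and recover $(f\,dz, g)$ by inverting the algebra of the null cone: $g$ is the appropriate ratio of components of $\Phi$ (the quantity playing the role of the stereographic image of the Lorentzian Gauss map) and $f\,dz$ is the complementary factor; one then checks that $g$ is meromorphic, $f\,dz$ holomorphic, that $|g|\not\equiv 1$ and the regularity of $d\sigma^{2}$ hold automatically since $F$ is a non-degenerate null immersion, and that the formula reproduces $\psi$. The step I expect to be the real work is the bookkeeping around $p_{L}$ — keeping the signs and the factor $-\mathrm{i}$ consistent so that Lorentzian nullity and standard $\C^3$ nullity translate cleanly into each other — together with, if one does not simply take the structural definition of a maxface as given, the analytic point that the holomorphic null lift $F$ extends holomorphically across the singular set $\{\,|g|=1\,\}$; that removable-singularity step is the one place where a genuine argument rather than a formal computation is needed.
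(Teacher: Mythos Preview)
The paper does not prove this statement at all: it is stated as a \textsc{Fact} with a citation to \cite[Theorem 2.6]{UY2006}, and no argument is given. There is therefore nothing in the paper to compare your proposal against.

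That said, your outline is the standard route to this representation formula and is essentially correct as a sketch. A few minor points: in your computation of $ds^2$ you should have $ds^2 = 2\langle \psi_z, \psi_{\bar z}\rangle\,|dz|^2$ with the factor of $|dz|^2$ included, and you need to be careful that the Lorentzian inner product $\langle\,,\,\rangle$ on the components of $\Phi$ gives $-|{-}2gf|^2 + |(1+g^2)f|^2 + |(1-g^2)f|^2 \cdot(-1)$ with the right sign on the third term coming from the factor $\mathrm{i}$; tracking this carefully indeed yields $(1-|g|^2)^2|f|^2$. Your remark that the removable-singularity/extension issue for the null lift is where real analysis enters is accurate, but in the Umehara--Yamada framework this is absorbed into the \emph{definition} of a maxface, so for the purposes of this paper one may take the structural definition as given and the argument reduces to the formal computations you describe.
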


We remark that if $\Sigma$ is simply connected, the condition \eqref{maximal-period} is automatically satisfied. 
We call $g$ the {\it Lorentzian Gauss map} of $\psi$. 
If $\psi$ has no singularities, then $g$ coincides with the composition of the Gauss map (i.e., (Lorentzian) unit normal vector) $n \colon \Sigma \to {\H}^{2}_{\pm}$, where 
\begin{eqnarray}
{\H}^{2}_{+} &:=& \{n=(n^{1}, \, n^{2}, \, n^{3})\in \L^3\, ; \, \langle n, n \rangle = -1, \, n^{1}> 0\}, \nonumber \\ 
{\H}^{2}_{-} &:=& \{n=(n^{1}, \, n^{2}, \, n^{3})\in \L^3\, ; \, \langle n, n \rangle = -1, \, n^{1}< 0\},   \nonumber
\end{eqnarray}
and ${\H}^{2}_{\pm}:={\H}^{2}_{+}\cup {\H}^{2}_{-}$ in $\L^3$ and the stereographic projection from $(1, 0, 0)$ of the hyperboloid onto $\RC$ (see \cite[Section 1]{UY2006} for the details). 
A maxface is said to be {\it weakly complete} if the metric $d{\sigma}^{2}$ as in (\ref{max-nullmet}) is complete. 
We remark that $(1/2) \, d{\sigma}^{2}$ coincides with the pull-back of the standard metric on ${\C}^{3}$ by the null holomorphic immersion of $\psi$ (see \cite[Section 2]{UY2006}). 

By applying Theorem \ref{thm:m-curvature est} in the case where $m = 2$, we can obtain the following criterion to determine which compact properties for the Lorentzian Gauss maps of maxfaces in $\L^3$ satisfy $2$-curvature estimate.  

\begin{theorem}\label{min-thm-2}
Let $P$ be a compact property for $\RC$-valued holomorphic maps. 
Then $\rm(\hspace{.18em}i\hspace{.18em})$ or $\rm(\hspace{.08em}ii\hspace{.08em})$ holds:
\begin{enumerate}
\item[(\hspace{.18em}i\hspace{.18em})] $P$ satisfies $2$-curvature estimate. 
\item[(\hspace{.08em}ii\hspace{.08em})] There exists a weakly complete maxface $\psi \colon \D \longrightarrow \L^3$ whose Lorentzian Gauss map lies in $\P(\D)$ and the Gaussian curvature with respect to the metric $d\sigma^2$ defined by \eqref{max-nullmet} satisfies $|K_{d\sigma^2}(0)| = 1/4$ and $|K_{d\sigma^2}| \leq 1$ on $\D$. 
\end{enumerate}
\end{theorem}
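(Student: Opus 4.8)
The plan is to reduce the statement directly to Theorem \ref{thm:m-curvature est} with $m = 2$, exploiting the fact that the null metric $d\sigma^2 = (1+|g|^2)^2 \, |f|^2 \, |dz|^2$ of a maxface (equation \eqref{max-nullmet}) has exactly the form \eqref{eq:m-metric} of the metric of a Weierstrass $2$-triple. Concretely, I would apply Theorem \ref{thm:m-curvature est} to the given compact property $P$ and to $m = 2$. If alternative $\rm(\hspace{.18em}i\hspace{.18em})$ there holds, then $P$ satisfies $2$-curvature estimate and we are in case $\rm(\hspace{.18em}i\hspace{.18em})$ of the present theorem.

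In the remaining case, Theorem \ref{thm:m-curvature est} produces a Weierstrass $2$-triple $(\D, \, f \, dz, \, g)$ such that $g \in \P(\D)$ is a nonconstant meromorphic function, $d\sigma^2 := (1+|g|^2)^2 \, |f|^2 \, |dz|^2$ is complete on $\D$, and $|K_{d\sigma^2}(0)| = 1/4$, $|K_{d\sigma^2}| \leq 1$ on $\D$. I would then feed the pair $(f \, dz, \, g)$ into the Enneper--Weierstrass-type representation for maxfaces (Fact \ref{max-EW}), for which three hypotheses must be checked: (a) $d\sigma^2$ is a Riemannian metric on $\D$, which is immediate since it is complete, hence in particular nondegenerate and finite, equivalently \eqref{eq:regularity} holds; (b) $|g| \not\equiv 1$ on $\D$, which follows because a nonconstant holomorphic map has open image and so cannot take values in the unit circle; and (c) the period condition \eqref{maximal-period}, which is automatic since $\D$ is simply connected, as remarked after Fact \ref{max-EW}.

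Fact \ref{max-EW} then yields a maxface $\psi \colon \D \longrightarrow \L^3$ whose Lorentzian Gauss map is $g \in \P(\D)$ and whose metric $d\sigma^2$ as in \eqref{max-nullmet} is the one above; since $d\sigma^2$ is complete, $\psi$ is weakly complete, and the curvature bounds $|K_{d\sigma^2}(0)| = 1/4$, $|K_{d\sigma^2}| \leq 1$ on $\D$ are precisely those supplied by Theorem \ref{thm:m-curvature est}. This gives case $\rm(\hspace{.08em}ii\hspace{.08em})$.

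There is no serious obstacle here: the substance is entirely contained in Theorem \ref{thm:m-curvature est}, and the point that requires care is merely bookkeeping — namely, that the curvature $K_{ds^2}$ of Definition \ref{def:m-curvature est} and the Gaussian curvature $K_{d\sigma^2}$ of Fact \ref{max-EW} both refer to the same object, the metric $(1+|g|^2)^2 \, |f|^2 \, |dz|^2$, and \emph{not} to the signed (and possibly singular) induced metric $ds^2 = (1-|g|^2)^2 \, |f|^2 \, |dz|^2$ of the maxface. The only mildly delicate verification is hypothesis (b) of Fact \ref{max-EW}, which I would dispatch with the open mapping theorem as indicated above.
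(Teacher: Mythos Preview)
Your proposal is correct and follows exactly the approach the paper indicates: the paper does not write out a proof of this theorem but simply prefaces it with the remark that it is obtained by applying Theorem \ref{thm:m-curvature est} in the case $m=2$, together with Fact \ref{max-EW} and the simple connectivity of $\D$. Your write-up merely spells out the three hypotheses of Fact \ref{max-EW} that the paper leaves implicit, and your verification of each (in particular, using the open mapping theorem for $|g|\not\equiv 1$) is sound.
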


Moreover, as applications of our argument in Section \ref{sbsec:3.1}, 
we obtain value distribution theoretic properties for the Lorentzian Gauss maps of maxfaces in $\L^3$. 
For instance, we can show the Calabi--Bernstein theorem (\cite{Ca1970,CY1976}) for maximal surfaces in $\L^3$.     

\begin{theorem}
Any complete maximal space-like surface in $\L^3$ must be a plane.   
\end{theorem}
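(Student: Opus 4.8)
The plan is to realize the weakly complete null metric of a complete maximal surface as the metric of a Weierstrass $2$-triple with bounded Gauss map, and then to quote Corollary \ref{cor:Bernstein}.

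Let $\psi\colon\Sigma\longrightarrow\L^3$ be a complete maximal space-like surface and let $(f\,dz,\,g)$ be its Weierstrass data as in Fact \ref{max-EW}, so that the induced metric is $ds^2=(1-|g|^2)^2|f|^2|dz|^2$ and the null metric is $d\sigma^2=(1+|g|^2)^2|f|^2|dz|^2$. Since $\psi$ is a space-like immersion it has no singular points, so $|g(p)|\neq 1$ for every $p\in\Sigma$; by the connectedness of $\Sigma$ and the continuity of $|g|$, either $|g|<1$ on all of $\Sigma$ or $|g|>1$ on all of $\Sigma$. In the second case I would replace $(f\,dz,\,g)$ by $(g^2 f\,dz,\,1/g)$: the regularity condition $(f\,dz)_0=2\,(g)_\infty$, which holds because $d\sigma^2$ is Riemannian, shows that $g^2 f\,dz$ is again a holomorphic $1$-form and $1/g$ a meromorphic function, a short computation shows that this new pair has the same $d\sigma^2$ and the same $ds^2$ (its maxface differs from $\psi$ only by the reflection $x^3\mapsto-x^3$), and now $|1/g|<1$. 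Hence I may assume $|g|<1$ on $\Sigma$.

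Next I would check that $d\sigma^2$ is complete. From $|g|<1$ we get $(1+|g|^2)^2\ge(1-|g|^2)^2$, so $d\sigma^2\ge ds^2$ pointwise; since $ds^2$ is complete by hypothesis, $d\sigma^2$ is complete as well. Thus $(\Sigma,\,f\,dz,\,g)$ is a Weierstrass $2$-triple whose metric \eqref{eq:m-metric} is complete on $\Sigma$ and whose meromorphic function $g$ is bounded. Applying Corollary \ref{cor:Bernstein} with $m=2$, I conclude that $g$ is constant on $\Sigma$.

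Finally, if $g\equiv c$ for a constant $c$, then by the representation in Fact \ref{max-EW} we have $\psi(z)=\Re\int_{z_0}^{z}(-2c,\,1+c^2,\,\mathrm{i}(1-c^2))\,f\,dz$, so $\psi(\Sigma)$ lies in an affine space-like plane of $\L^3$; being the image of a complete space-like immersion, $\psi(\Sigma)$ is that entire plane. There is essentially no hard step here once Corollary \ref{cor:Bernstein} is in hand: the only points requiring attention are the bookkeeping in the case $|g|>1$, namely verifying via the regularity condition at the poles of $g$ that $g^2 f$ is holomorphic, and the elementary observation that $d\sigma^2$ inherits completeness from $ds^2$. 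One may in fact bypass the $|g|>1$ discussion altogether by fixing at the outset the Lorentzian unit normal into $\H^2_+$, replacing $n$ by $-n$ if necessary, which under the stereographic-projection convention of \cite{UY2006} is exactly the normalization $|g|<1$.
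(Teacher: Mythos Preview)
Your proof is correct and follows essentially the same route as the paper's: reduce to $|g|<1$ (the paper simply says ``by considering $1/g$ if necessary''), use $ds^2\le d\sigma^2$ to upgrade completeness of $ds^2$ to weak completeness, and apply Corollary \ref{cor:Bernstein} with $m=2$. Your additional bookkeeping for the case $|g|>1$ and the explicit verification that a constant $g$ forces $\psi(\Sigma)$ to be a plane only make the paper's argument more detailed, not different.
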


\begin{proof}
Since a maximal space-like surface has no singularities, the complement of the image of its Lorentzian Gauss map $g$ contains at least the set $\{|g|=1\} \subset \RC$. 
Then, by considering $1/g$ if necessary, we may assume that $|g|<1$ on $\Sigma$. 
Furthermore, since 
\[
ds^{2} 
= (1-|g|^{2})^{2} \, |f|^{2} \, |dz|^2
\leq (1+|g|^{2})^{2} \, |f|^2 \, |dz|^2 
= d{\sigma}^{2} 
\]
and $ds^2$ is complete on $\Sigma$, $d{\sigma}^{2}$ is also complete on $\Sigma$. 
By Corollary \ref{cor:Bernstein}, $g$ is constant on $\Sigma$, that is, the surface $\psi(\Sigma)$ is a plane.  
\end{proof}

Furthermore, as an application of Corollary \ref{cor:Fujimoto}, 
we can obtain the Picard-type theorem (\cite[Theorem 4.3]{YK2013}) for this class. 

\begin{theorem}
If the Lorentzian Gauss map of a weakly complete maxface in $\L^3$ is nonconstant,  it can omit at most $4 \, (=2+2)$ values. 
Furthermore, this result is optimal. 
\end{theorem}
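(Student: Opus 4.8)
The plan is to recognize the triple $(\Sigma, f\,dz, g)$ attached to a maxface as a Weierstrass $2$-triple and then invoke Corollary \ref{cor:Fujimoto} with $m=2$. Indeed, by Fact \ref{max-EW} a maxface $\psi$ is built from a holomorphic $1$-form $f\,dz$ and a meromorphic function $g$ on $\Sigma$ for which the null metric $d\sigma^2=(1+|g|^2)^2\,|f|^2\,|dz|^2$ is a Riemannian metric; this regularity is precisely the condition $(f\,dz)_0 = 2\,(g)_\infty$ of \eqref{eq:regularity} with $m=2$. Hence $(\Sigma, f\,dz, g)$ is a Weierstrass $2$-triple whose associated metric $ds^2$ in the sense of Definition \ref{def:m-pair} coincides with $d\sigma^2$, and weak completeness of $\psi$ says exactly that this metric is complete on $\Sigma$. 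Therefore, if the Lorentzian Gauss map $g$ is nonconstant, Corollary \ref{cor:Fujimoto} applied with $m=2$ gives that $g$ omits at most $m+2=4$ distinct values in $\RC$, which is the first assertion.

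For the optimality statement I would reuse the extremal example constructed in the proof of Corollary \ref{cor:Fujimoto}, checking only that it can be realized as a weakly complete maxface. Take $\Sigma$ to be the universal covering of $\C\setminus\{\alpha_1,\alpha_2,\alpha_3\}$ and pull back $f\,dz := dz/\prod_{j=1}^{3}(z-\alpha_j)$ and $g := z$. Since $\Sigma$ is simply connected, the period condition \eqref{maximal-period} is automatically satisfied, and $|g|\not\equiv 1$ because $g$ is nonconstant, so Fact \ref{max-EW} produces a maxface $\psi\colon\Sigma\to\L^3$. Its Lorentzian Gauss map $g$ omits the four values $\alpha_1,\alpha_2,\alpha_3,\infty$, and the null metric $d\sigma^2=(1+|g|^2)^2\,|f|^2\,|dz|^2$ is complete by the same divergence of arc length used for Corollary \ref{cor:Fujimoto}: along any curve tending to one of $\alpha_1,\alpha_2,\alpha_3$ or to $\infty$ the integral of $(1+|z|^2)/\prod_{j=1}^{3}|z-\alpha_j|$ diverges. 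Thus $\psi$ is a weakly complete maxface with nonconstant Lorentzian Gauss map omitting exactly $4$ values, so the bound cannot be improved.

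The argument is almost entirely a matter of translation, so I do not expect a genuine obstacle; the only points requiring care are the two that distinguish maxfaces from arbitrary Weierstrass $2$-triples, namely the period condition \eqref{maximal-period} and the nondegeneracy $|g|\not\equiv 1$, and both are disposed of by passing to the simply connected universal cover and by nonconstancy of $g$, respectively. One should also remember that, exactly as in Corollary \ref{cor:Fujimoto}, passing to the universal cover does not destroy completeness, since a divergent path upstairs still exhausts a neighborhood of a puncture or of $\infty$ downstairs, along which the length integral is already infinite.
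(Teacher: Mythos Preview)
Your proposal is correct and follows the same route the paper indicates: the theorem is stated there simply as an application of Corollary \ref{cor:Fujimoto} with $m=2$, and you have filled in exactly those details, including reusing the extremal example from that corollary (on the universal cover, so that \eqref{maximal-period} holds automatically) to establish optimality. The one place where your exposition could be tightened is the final remark on completeness: rather than arguing about divergent paths upstairs ``exhausting a neighborhood of a puncture,'' it is cleaner to note that the pullback of a complete Riemannian metric by a covering map is complete, which is a standard fact.
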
 

\subsection{Lagrangian Gauss map of improper affine fronts in $\R^3$} \label{sbseq:4.3} 
Improper affine spheres in the affine $3$-space ${\R}^{3}$ also have similar properties to minimal surfaces in the Euclidean $3$-space. 
Mart\'inez \cite{Ma2005} discovered the correspondence between improper affine spheres and smooth special Lagrangian immersions in 
the complex $2$-space ${\C}^{2}$ and introduced the notion of {\it improper affine fronts}, that is, a class of (locally strongly convex) improper affine spheres with some admissible singularities in ${\R}^{3}$. 
We remark that this class is called ``improper affine maps'' in \cite{Ma2005}, but we call this class ``improper affine fronts'' because all of improper affine maps are wave fronts in ${\R}^{3}$ (\cite{Na2009,UY2011}). 
The differential geometry of wave fronts is discussed in \cite{SUY2022}. 
Moreover, Mart\'inez gave the following holomorphic representation for this class. 

\begin{fact} \cite[Theorem 3]{Ma2005} \label{fact-IArep}
Let $\Sigma$ be a Riemann surface and $(F, G)$ a pair of holomorphic functions on $\Sigma$ such that $|dF|^{2} + |dG|^{2}$ is positive definite on $\Sigma$ and 
\[
\mathrm{Re} \int_{c} F \, dG = 0
\] 
for every cycle $c \in H_{1} (\Sigma, \Z)$. 
Then the map $\psi \colon \Sigma \longrightarrow {\R}^{3} = \C \times {\R}$ given by 
\[
\psi := \biggl{(}G+\overline{F}, \, \dfrac{|G|^{2}-|F|^{2}}{2} + \mathrm{Re}\biggl{(} G \, F - 2 \int F \, dG \biggr{)} \biggr{)}
\]
is an improper affine front. 
Conversely, any improper affine front is given in this way. 
Moreover, we set $x :=  G + \overline{F}$ and $n := \overline{F} - G$. 
Then $L_{\psi} := x + \mathrm{i} \, n \colon \Sigma \longrightarrow {\C}^{2}$ is a special Lagrangian immersion whose induced metric $d{\tau}^{2}$ from ${\C}^{2}$ is given by 
\[
d{\tau}^{2} = 2 \, (|dF|^{2}+|dG|^{2}). 
\]
Furthermore, the affine metric $h$ of $\psi$ is expressed as $h := |dG|^{2} - |dF|^{2}$ and a singular point of $\psi$ corresponds to a 
point where $|dF| = |dG|$. 
\end{fact}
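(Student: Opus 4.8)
The statement is Mart\'inez's holomorphic representation for improper affine fronts, quoted from \cite[Theorem~3]{Ma2005}; I will indicate how one would reprove it. The plan is to run the correspondence between locally strongly convex improper affine spheres and special Lagrangian graphs in $\C^{2}$ in three stages: identify the two objects on the regular (immersed) part, parametrize special Lagrangian graphs by a pair of holomorphic functions, and then extend everything across the admissible singularities.

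For the first stage, I would recall that after an equiaffine normalization a locally strongly convex improper affine sphere with affine normal $(0,0,1)$ is locally the graph $x^{3} = u(x^{1},x^{2})$ of a solution of $\det(\mathrm{Hess}\,u) = 1$ with $\mathrm{Hess}\,u > 0$, whose Blaschke metric is $h = \sum u_{ij}\,dx^{i}\,dx^{j}$. The decisive computation is the two-variable identity $\det(I + \mathrm{i}\,\mathrm{Hess}\,u) = \mathrm{i}\,\mathrm{tr}(\mathrm{Hess}\,u)$, valid precisely when $\det(\mathrm{Hess}\,u) = 1$: it says that the Lagrangian graph $x \longmapsto x + \mathrm{i}\,\nabla u(x)$ in $\C^{2} = T^{*}\R^{2}$ has constant Lagrangian angle $\pi/2$, hence is special Lagrangian, and conversely every special Lagrangian graph of this phase arises from such a $u$. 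Under this identification the position map $x$, the gradient/conormal map $n = \nabla u$, and the third coordinate of $\psi$ (a potential for the $1$-form $\langle n, dx\rangle$) acquire their geometric meaning, and the induced metric of the special Lagrangian immersion is $d\tau^{2} = |dx|^{2} + |dn|^{2}$.

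For the second stage, I would take a conformal coordinate $z$ for $d\tau^{2}$ and show that minimality together with the Lagrangian condition force $dx$ and $dn$ to be built from two holomorphic $1$-forms: setting $dx = dG + \overline{dF}$ and $dn = \overline{dF} - dG$ with $F$, $G$ holomorphic solves the structure equations, whence $d\tau^{2} = 2\,(|dF|^{2} + |dG|^{2})$ by a direct computation, so the immersion (non-degeneracy) condition becomes exactly positive definiteness of $|dF|^{2} + |dG|^{2}$. The Blaschke metric then reads $h = |dG|^{2} - |dF|^{2}$, and its degeneracy locus $\{|dF| = |dG|\}$ is precisely where $\psi$ stops being an immersion but, because $dF$ and $dG$ cannot vanish together there, still carries a smooth unit conormal and so is a wave front. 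Integrating $dx$ and a potential of $\langle n, dx\rangle$ recovers the stated formula for $\psi$, and the only obstruction to $\psi$ being single-valued on $\Sigma$ rather than on its universal cover is the multivaluedness of $\int F\,dG$, which produces the period condition $\mathrm{Re}\int_{c} F\,dG = 0$. The converse runs this argument backwards on each simply connected piece of the regular set and then patches the local data.

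I expect the main obstacle to be the global and boundary analysis in the third stage: showing that the holomorphic $1$-forms $dF$, $dG$ — hence $F$, $G$ up to additive constants — are defined on all of $\Sigma$ and extend holomorphically across the singular set $\{|dF| = |dG|\}$, rather than merely existing on each coordinate patch of the immersed part. This is exactly where the front hypothesis does the real work, controlling the affine data near the singularities, and where one must check that the recovered data have trivial monodromy because $\psi$ itself is single-valued.
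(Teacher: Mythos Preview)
The paper does not prove this statement at all: it is stated as a \textsc{Fact} quoted verbatim from \cite[Theorem~3]{Ma2005} and used as a black box, so there is no proof in the paper to compare your proposal against. Your outline is a reasonable sketch of the Mart\'inez argument --- the correspondence between improper affine spheres and special Lagrangian graphs via $\det(\mathrm{Hess}\,u)=1$, the conformal parametrization by holomorphic data $(F,G)$, and the front extension across $\{|dF|=|dG|\}$ --- but since the authors simply cite the result, any comparison of approaches would be with Mart\'inez's original paper rather than with this one.
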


We remark that Nakajo \cite{Na2009} constructed a representation formula for indefinite improper affine spheres with some admissible singularities. 

The nontrivial part of the Gauss map of $L_{\psi} \colon \Sigma \longrightarrow {\C}^{2} \simeq {\R}^{4}$ (see \cite{CM1987}) is the meromorphic function $\nu \colon \Sigma \longrightarrow \RC$ given by 
\[
\nu := \dfrac{dF}{dG}, 
\]
which is called the {\it Lagrangian Gauss map} of $\psi$. 
An improper affine front is said to be {\it weakly complete} if the induced metric $d{\tau}^{2}$ is complete. 
Remark that 
\begin{align} \label{eq:induced-metric}
d{\tau}^{2} = 2 \, (|dF|^{2}+|dG|^{2}) = 2 \, (1+|\nu|^{2}) \, |dG|^{2}. 
\end{align}

By applying Theorem \ref{thm:m-curvature est} in the case where $m = 1$, we can obtain the following criterion to determine which compact properties for the Lagrangian Gauss maps of improper affine fronts in $\R^3$ satisfy $1$-curvature estimate. 

\begin{theorem}\label{min-thm-3}
Let $P$ be a compact property for $\RC$-valued holomorphic maps. 
Then $\rm(\hspace{.18em}i\hspace{.18em})$ or $\rm(\hspace{.08em}ii\hspace{.08em})$ holds:
\begin{enumerate}
\item[(\hspace{.18em}i\hspace{.18em})] $P$ satisfies $1$-curvature estimate. 
\item[(\hspace{.08em}ii\hspace{.08em})] There exists a weakly complete improper affine front $\psi \colon \D \longrightarrow \R^3$ whose Lagrangian Gauss map lies in $\P(\D)$ and the Gaussian curvature with respect to the metric $d\tau^2$ defined by \eqref{eq:induced-metric} satisfies $|K_{d\tau^2}(0)|=1/8$ and $|K_{d\tau^2}|\leq 1/2$ on $\D$. 
\end{enumerate}
\end{theorem}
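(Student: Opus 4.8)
The plan is to deduce Theorem \ref{min-thm-3} directly from Theorem \ref{thm:m-curvature est} applied with $m = 1$, using the holomorphic representation of improper affine fronts in Fact \ref{fact-IArep} as the dictionary between Weierstrass $1$-triples and this class of surfaces. Suppose that alternative (i) fails, i.e., $P$ does not satisfy $1$-curvature estimate. By Theorem \ref{thm:m-curvature est} with $m=1$, there exists a Weierstrass $1$-triple $(\D, \, f\,dz, \, g)$ such that $g \in \P(\D)$ is a nonconstant meromorphic function, $ds^2 := (1+|g|^2)\,|f|^2\,|dz|^2$ is complete on $\D$, and $|K_{ds^2}(0)| = 1/4$, $|K_{ds^2}| \leq 1$ on $\D$. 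I would then build the desired improper affine front out of this data.

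For the construction, set $\nu := g$, let $G$ be a holomorphic primitive of $f\,dz$ on $\D$ (which exists since $\D$ is simply connected), so that $dG = f\,dz$, and define $F$ to be a holomorphic primitive of $g\,f\,dz$, i.e., $dF = \nu\,dG$. The point to check here is that $g\,f\,dz$ really is a holomorphic $1$-form: by the regularity condition \eqref{eq:regularity} for a Weierstrass $1$-triple, $(f\,dz)_0 = (g)_\infty$, so the poles of $g$ are exactly cancelled by the zeros of $f\,dz$ and $g\,f\,dz$ has no poles. Moreover, $|dF|^2 + |dG|^2 = (1 + |g|^2)\,|f|^2\,|dz|^2 = ds^2$ is positive definite on $\D$, and the period condition $\mathrm{Re}\int_c F\,dG = 0$ holds vacuously because $H_1(\D,\Z) = 0$. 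Hence Fact \ref{fact-IArep} produces an improper affine front $\psi \colon \D \longrightarrow \R^3$ whose Lagrangian Gauss map is exactly $\nu = dF/dG = g \in \P(\D)$ and nonconstant.

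It then remains to translate the completeness and the curvature normalizations. By \eqref{eq:induced-metric}, the induced metric of $\psi$ is $d\tau^2 = 2\,(1+|\nu|^2)\,|dG|^2 = 2\,ds^2$, which is complete on $\D$ because $ds^2$ is; thus $\psi$ is weakly complete. Finally, since $d\tau^2$ is the constant multiple $2\,ds^2$ of the Weierstrass metric, we have $K_{d\tau^2} = \tfrac{1}{2}\,K_{ds^2}$ (equivalently, regard $d\tau^2$ as the metric of the Weierstrass $1$-triple $(\D, \sqrt{2}\,f\,dz, g)$ and apply \eqref{eq:Gauss curvature of m-Riemann surf.} with $m = 1$), so that $|K_{d\tau^2}(0)| = \tfrac{1}{2}\cdot\tfrac{1}{4} = \tfrac{1}{8}$ and $|K_{d\tau^2}| \leq \tfrac{1}{2}$ on $\D$. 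This is precisely alternative (ii).

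I do not expect a serious obstacle here: the analytic content is already packaged in Theorem \ref{thm:m-curvature est}, and the only two points requiring care are (a) verifying, via the regularity condition \eqref{eq:regularity}, that $g\,f\,dz$ is holomorphic so that the primitive $F$ is well-defined, and (b) bookkeeping the conformal factor $2$ relating $d\tau^2$ to the Weierstrass metric $ds^2$, which is exactly what turns the normalizations $1/4$ and $1$ of Theorem \ref{thm:m-curvature est} into the constants $1/8$ and $1/2$ appearing in the statement.
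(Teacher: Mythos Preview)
Your proposal is correct and follows exactly the route the paper intends: the paper states Theorem \ref{min-thm-3} as an immediate consequence of Theorem \ref{thm:m-curvature est} with $m=1$ together with the representation in Fact \ref{fact-IArep}, and gives no separate proof. You have correctly supplied the missing dictionary, including the two delicate points---that $g\,f\,dz$ is holomorphic by the regularity condition \eqref{eq:regularity} so that the primitive $F$ exists, and that $d\tau^{2}=2\,ds^{2}$ accounts for the constants $1/8$ and $1/2$.
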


Moreover, as applications of our argument in Section \ref{sbsec:3.1}, 
we obtain value distribution theoretic properties for the Lagrangian Gauss maps of improper affine fronts in $\R^3$. 
For instance, we can show the parametric affine Bernstein theorem (\cite{Ca1958, Jo1954}) for improper affine spheres in $\R^3$. 

\begin{theorem}
Any affine complete improper affine sphere in $\R^3$ must be an elliptic paraboloid. 
\end{theorem}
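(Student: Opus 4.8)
The plan is to mirror the proofs of the Bernstein-type theorems for minimal surfaces and for maxfaces given above, reducing everything to Corollary \ref{cor:Bernstein} via the holomorphic representation of Fact \ref{fact-IArep}. Let $\psi \colon \Sigma \longrightarrow \R^3$ be an affine complete improper affine sphere, and write it as in Fact \ref{fact-IArep} by a pair $(F,G)$ of holomorphic functions on the (necessarily open) Riemann surface $\Sigma$, with affine metric $h = |dG|^2 - |dF|^2$ and Lagrangian Gauss map $\nu = dF/dG$. Since an improper affine sphere is a locally strongly convex immersion without singular points, $h$ is positive definite, i.e. $|dF| < |dG|$ everywhere; equivalently $|\nu| < 1$ on $\Sigma$. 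In particular $dG$ has no zeros, so $\nu \colon \Sigma \longrightarrow \D$ is a bounded holomorphic function and $\sqrt{2}\, dG$ is a holomorphic $1$-form without zeros on $\Sigma$.

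First I would check that $(\Sigma, \sqrt{2}\, dG, \nu)$ is a Weierstrass $1$-triple in the sense of Definition \ref{def:m-pair}: the regularity condition \eqref{eq:regularity} holds trivially, since $\sqrt{2}\, dG$ has no zeros and $\nu$ has no poles, and by \eqref{eq:induced-metric} the associated metric \eqref{eq:m-metric} with $m=1$ is exactly $d\tau^2 = 2\,(1+|\nu|^2)\,|dG|^2$. Next I would transfer completeness: from $|\nu| < 1$ one obtains the pointwise comparison
\begin{align*}
h = (1-|\nu|^2)\,|dG|^2 \leq 2\,|dG|^2 \leq 2\,(1+|\nu|^2)\,|dG|^2 = d\tau^2 \quad \mathrm{on} \; \Sigma,
\end{align*}
so that every divergent path has infinite $d\tau^2$-length whenever it has infinite $h$-length. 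Hence affine completeness of $\psi$ (completeness of $h$) implies that $d\tau^2$ is complete on $\Sigma$.

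With these two facts in hand, Corollary \ref{cor:Bernstein} applied to the Weierstrass $1$-triple $(\Sigma, \sqrt{2}\, dG, \nu)$ --- whose metric $d\tau^2$ is complete and whose meromorphic function $\nu$ is bounded --- yields that $\nu$ is constant on $\Sigma$. It then remains to conclude that a constant Lagrangian Gauss map forces $\psi(\Sigma)$ to be an elliptic paraboloid. This is the characterization underlying \cite{Ma2005, KN}: writing $dF = c\, dG$ with a constant $c$, $|c| < 1$, hence $F = cG + d$ for some constant $d$, and substituting into the representation of Fact \ref{fact-IArep} makes both $x = G + \overline{F}$ and the height function into quadratic expressions in $G$ and $\overline{G}$, so that $\psi(\Sigma)$ is (affinely congruent to) an elliptic paraboloid.

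The substantive step is Corollary \ref{cor:Bernstein}, which is already available; within the present argument the only points requiring care are verifying the regularity condition so that $(\Sigma, \sqrt{2}\, dG, \nu)$ genuinely is a Weierstrass $1$-triple, and recording that it is \emph{affine} completeness --- completeness of $h$, not of $d\tau^2$ --- that is assumed, the implication from the former to the latter being exactly the metric comparison displayed above. The passage from ``constant Lagrangian Gauss map'' to ``elliptic paraboloid'' is a routine computation with the representation formula and may alternatively be quoted from \cite{Ma2005}.
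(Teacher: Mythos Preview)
Your proof is correct and follows essentially the same route as the paper: deduce $|\nu|<1$ from the absence of singular points, compare $h \leq d\tau^{2}$ to upgrade affine completeness to completeness of $d\tau^{2}$, apply Corollary~\ref{cor:Bernstein} to the Weierstrass $1$-triple to conclude that $\nu$ is constant, and then identify the surface as an elliptic paraboloid (the paper quotes \cite[Proposition~3.1]{KN} for this last step). The only cosmetic difference is that the paper phrases the first reduction as ``by exchanging roles of $dF$ and $dG$ if necessary'' rather than invoking positive definiteness of $h$ directly.
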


\begin{proof}
Since an improper affine sphere has no singularities, the complement of the image of its Lagrangian Gauss map $\nu$ contains at least the circle $\{ |\nu| =1 \}\subset \RC$. 
Then, by exchanging roles of $dF$ and $dG$ if necessarily, we may assume that $|\nu| < 1$, that is, $|dF| < |dG|$. 
In addition, since
\[
h = |dG|^{2}-|dF|^{2} < 2 \, (|dF|^{2}+|dG|^{2}) = d\tau^2  
\]
and $h$ is complete on $\Sigma$, $d\tau^2$ is also complete on $\Sigma$. 
By applying Corollary \ref{cor:Bernstein}, $\nu$ is constant. 
From \cite[Proposition 3.1]{KN}, $\psi (\Sigma)$ is an elliptic paraboloid.   
\end{proof}

Furthermore, as an application of Corollary \ref{cor:Fujimoto}, we can obtain the Picard-type theorem (\cite[Theorem 3.2]{KN}) for this class. 

\begin{theorem}
If the Lagrangian Gauss map of a weakly complete improper affine front in $\R^3$ is nonconstant, it can omit at most $3 \, (=1+2)$ values. 
Furthermore, this result is optimal. 
\end{theorem}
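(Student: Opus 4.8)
The plan is to recognize this statement as a direct consequence of Corollary~\ref{cor:Fujimoto} with $m=1$, in complete parallel with the maxface case treated in Section~\ref{sbseq:4.2}. First I would invoke Fact~\ref{fact-IArep} to express the weakly complete improper affine front $\psi$ via a pair $(F,G)$ of holomorphic functions on $\Sigma$, with Lagrangian Gauss map $\nu = dF/dG$ and induced metric $d\tau^{2} = 2\,(1+|\nu|^{2})\,|dG|^{2}$ as in \eqref{eq:induced-metric}. Writing $dG = f\,dz$ in a local coordinate $z$, positive definiteness of $|dF|^{2}+|dG|^{2}$ forces $dF$ and $dG$ never to vanish simultaneously, so at a zero of $dG$ of order $k$ the form $dF$ is nonzero and $\nu$ has a pole of order exactly $k$, and conversely every pole of $\nu$ arises in this way. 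Hence $(f\,dz)_{0} = (\nu)_{\infty}$, which by \eqref{eq:regularity} (with $m=1$) is precisely the regularity condition making $(\Sigma,\,f\,dz,\,\nu)$ a Weierstrass $1$-triple, with metric $ds^{2} = (1+|\nu|^{2})\,|f|^{2}\,|dz|^{2} = \frac12\,d\tau^{2}$.

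Next I would observe that weak completeness of $\psi$ means $d\tau^{2}$ is complete, hence $ds^{2} = \frac12\,d\tau^{2}$ is complete on $\Sigma$. Applying Corollary~\ref{cor:Fujimoto} to the Weierstrass $1$-triple $(\Sigma,\,f\,dz,\,\nu)$ then gives that, if $\nu$ is nonconstant, it can omit at most $m+2 = 3$ distinct values, which is the first assertion.

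For the optimality claim I would borrow the explicit example from the second part of Corollary~\ref{cor:Fujimoto} in the case $m=1$: let $\Sigma$ be the universal covering of $\C\setminus\{\alpha_{1},\alpha_{2}\}$, set $f\,dz := dz / \{(z-\alpha_{1})(z-\alpha_{2})\}$ and $g := z$. Define $G$ to be a primitive of $f\,dz$ and $F$ a primitive of $z\,f\,dz = g\,dG$ on $\Sigma$; these are well-defined single-valued holomorphic functions because $\Sigma$ is simply connected, so both the period condition $\Re\int_{c} F\,dG = 0$ and the single-valuedness of $F,G$ are automatic, and $|dF|^{2}+|dG|^{2} = (1+|z|^{2})\,|f|^{2}\,|dz|^{2} > 0$ is positive definite since $f$ has no zeros on $\Sigma$. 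By Fact~\ref{fact-IArep} the resulting $\psi$ is an improper affine front whose Lagrangian Gauss map is $\nu = dF/dG = z$, which is nonconstant and omits exactly the three values $\alpha_{1},\alpha_{2},\infty$; moreover $d\tau^{2} = 2\,ds^{2}$ is complete because $ds^{2} = (1+|g|^{2})\,|f|^{2}\,|dz|^{2}$ is complete by the integral estimate in the proof of Corollary~\ref{cor:Fujimoto}, so $\psi$ is weakly complete. This shows the bound $3$ is sharp.

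The conceptual content is entirely contained in Corollary~\ref{cor:Fujimoto}; the only points requiring genuine care are the bookkeeping identifying $d\tau^{2}$ with (twice) a Weierstrass $1$-triple metric --- in particular matching the zero divisor of $dG$ with the polar divisor of $\nu$ --- and, for optimality, checking that the explicit Weierstrass data integrates to single-valued $F,G$, which is why one passes to the universal cover. Neither of these is a serious obstacle, so I do not anticipate a hard step.
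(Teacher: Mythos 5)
Your proposal is correct and follows exactly the route the paper intends: the theorem is stated as a direct application of Corollary~\ref{cor:Fujimoto} with $m=1$, via the identification $d\tau^{2}=2\,ds^{2}$ of the induced metric with a Weierstrass $1$-triple metric for $(\Sigma,\,dG,\,\nu)$. The paper leaves these details implicit, and you supply them correctly, including the divisor bookkeeping $(dG)_{0}=(\nu)_{\infty}$ forced by positive definiteness of $|dF|^{2}+|dG|^{2}$ and the passage to the universal cover to make $F,G$ single-valued in the optimality example.
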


\subsection{Ratio of canonical forms of flat fronts in $\H^3$} \label{sbseq:4.4} 
For a simply connected Riemann surface $\Sigma$ and a holomorphic Legendrian immersion $\Lc\colon \Sigma \to SL(2, \C)$, the projection 
\[
\psi := \Lc \, {\Lc}^{\ast}\colon \Sigma \to {\H}^{3}
\]
gives a {\it flat front} in ${\H}^{3}$. 
Here, flat fronts in ${\H}^{3}$ are flat surfaces in ${\H}^{3}$ with some admissible singularities (see \cite{KRUY2007,KUY2004} for the definition of flat fronts in ${\H}^{3}$). 
We call $\Lc$ a {\it holomorphic lift} of $\psi$. 
Since $\Lc$ is a holomorphic Legendrian map, ${\Lc}^{-1} \, {d \, \Lc}$ is off-diagonal (see \cite{GMM2000,KUY2003,KUY2004}). 
For holomorphic $1$-forms $\omega$ and $\theta$ on $\Sigma$, if we set  
\[
{\Lc}^{-1} \, {d \, \Lc} = \left(
\begin{array}{cc}
0      & \theta  \\
\omega & 0
\end{array}
\right), 
\]
then the pull-back of the canonical Hermitian metric of $SL(2, \C)$ by $\Lc$ is represented as 
\[
ds^{2}_{\Lc}:=|\omega|^{2}+|\theta|^{2}.
\] 
We remark that $2 \, ds^{2}_{\Lc}$ coincides with the pull-back of the Sasakian metric on the unit cotangent bundle $T_{1}^{\ast} \H^3$ by the holomorphic lift $\Lc$ of $\psi$ (See \cite[Section 2]{KUY2004} for details). 
A flat front $\psi$ is said to be {\it weakly complete} if the metric $ds^{2}_{\Lc}$ is complete (\cite{KRUY2009, UY2011}). 
We define a meromorphic function on $\Sigma$ by the ratio of canonical forms 
\[
\rho := \dfrac{\theta}{\omega}. 
\]
Then a point $p \in \Sigma$ is a singular point of $\psi$ if and only if $|\rho (p)|=1$ (\cite{KRSUY2005}). 
We remark that 
\begin{align}\label{eq:flat-Sasaki}
ds^{2}_{\Lc} 
= |\omega|^{2}+|\theta|^{2} 
= (1+|\rho|^{2}) \, |\omega|^{2}. 
\end{align}

By applying Theorem \ref{thm:m-curvature est} in the case where $m = 1$, we can obtain the following criterion to determine which compact properties for the ratios of canonical forms of flat fronts in $\H^3$ satisfy $1$-curvature estimate. 
  
\begin{theorem}\label{min-thm-3}
Let $P$ be a compact property for $\RC$-valued holomorphic maps. 
Then $\rm(\hspace{.18em}i\hspace{.18em})$ or $\rm(\hspace{.08em}ii\hspace{.08em})$ holds:
\begin{enumerate}
\item[(\hspace{.18em}i\hspace{.18em})] $P$ satisfies $1$-curvature estimate. 
\item[(\hspace{.08em}ii\hspace{.08em})] There exists a weakly complete flat front $\psi \colon \D \longrightarrow \H^3$ whose ratio of canonical forms lies in $\P(\D)$ and the Gaussian curvature with respect to the metric $ds_{\Lc}^2$ defined by \eqref{eq:flat-Sasaki} satisfies $|K_{ds_{\Lc}^2}(0)|=1/4$ and $|K_{ds_{\Lc}^2}|\leq 1$ on $\D$. 
\end{enumerate}
\end{theorem}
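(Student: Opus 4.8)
The plan is to reduce this dichotomy to Theorem \ref{thm:m-curvature est} with $m = 1$, using the Weierstrass-type representation of flat fronts recalled in this subsection to pass between a Weierstrass $1$-triple on $\D$ and a flat front on $\D$. First I would observe that the metric $ds^2_{\Lc}$ in \eqref{eq:flat-Sasaki} has exactly the shape $(1 + |\rho|^2)\,|\omega|^2$ of the metric \eqref{eq:m-metric} with $m = 1$, $f\,dz = \omega$, $g = \rho$, so that the ratio of canonical forms of a flat front together with its canonical forms constitutes a Weierstrass $1$-triple, and its Gaussian curvature $K_{ds^2_{\Lc}}$ is the curvature \eqref{eq:Gauss curvature of m-Riemann surf.} with $m = 1$. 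Consequently alternative (i) of the present theorem is literally alternative (i) of Theorem \ref{thm:m-curvature est} for $m = 1$, so there is nothing to prove in that case; the content is to convert alternative (ii) of Theorem \ref{thm:m-curvature est} (with $m = 1$) into a flat front.

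So suppose Theorem \ref{thm:m-curvature est} supplies a Weierstrass $1$-triple $(\D, \, f\,dz, \, g)$ with nonconstant $g \in \P(\D)$, with $ds^2 := (1 + |g|^2)\,|f|^2\,|dz|^2$ complete on $\D$, and with $|K_{ds^2}(0)| = 1/4$ and $|K_{ds^2}| \leq 1$ on $\D$. I would put $\omega := f\,dz$ and $\theta := g\,f\,dz$; these are holomorphic $1$-forms on $\D$ with $|\omega|^2 + |\theta|^2 = (1 + |g|^2)\,|f|^2\,|dz|^2 = ds^2$, which is positive on $\D$ by \eqref{eq:regularity} with $m = 1$, so $\omega$ and $\theta$ have no common zero. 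Since $\D$ is simply connected, I would integrate the off-diagonal linear holomorphic system
\begin{equation*}
d\Lc = \Lc \left(
\begin{array}{cc}
0 & \theta \\
\omega & 0
\end{array}
\right), \qquad \Lc(0) = I,
\end{equation*}
to obtain a holomorphic map $\Lc \colon \D \to SL(2, \C)$: the solution is global on the simply connected disk, and it stays in $SL(2, \C)$ because the coefficient matrix is trace-free, so $\det \Lc \equiv \det \Lc(0) = 1$. As $d\Lc$ vanishes only at common zeros of $\omega$ and $\theta$, of which there are none, $\Lc$ is a holomorphic Legendrian immersion, and hence $\psi := \Lc\,\Lc^{\ast} \colon \D \to \H^3$ is a flat front with holomorphic lift $\Lc$ (see \cite{GMM2000, KUY2003, KUY2004}).

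It then remains to identify the invariants. By construction the off-diagonal entries of ${\Lc}^{-1}\,d\Lc$ are $\omega$ and $\theta$, so the canonical forms of $\psi$ are $\omega$, $\theta$ and its ratio of canonical forms is $\rho = \theta/\omega = g \in \P(\D)$, which is nonconstant. The metric \eqref{eq:flat-Sasaki} of $\psi$ is $ds^2_{\Lc} = |\omega|^2 + |\theta|^2 = (1 + |g|^2)\,|f|^2\,|dz|^2 = ds^2$, so $ds^2_{\Lc}$ is complete, i.e., $\psi$ is weakly complete; and, $ds^2_{\Lc}$ and $ds^2$ being equal as metrics, $|K_{ds^2_{\Lc}}(0)| = |K_{ds^2}(0)| = 1/4$ and $|K_{ds^2_{\Lc}}| = |K_{ds^2}| \leq 1$ on $\D$. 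This yields alternative (ii), completing the plan, and the argument runs entirely parallel to those for minimal surfaces in Theorem \ref{min-thm-1} and maxfaces in Theorem \ref{min-thm-2}, with $m = 1$ in place of $m = 2$.

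The only step that is not bookkeeping is the construction of the holomorphic Legendrian lift $\Lc$ on $\D$: one must know that integrating the off-diagonal system over the simply connected disk is unobstructed (the counterpart of the vanishing of the period conditions in the minimal-surface and maxface settings) and that the projection $\Lc\,\Lc^{\ast}$ really is a flat front with only admissible singularities, its singular set being $\{|\rho| = 1\}$ by \cite{KRSUY2005} and its weak completeness being as in \cite{KRUY2009, UY2011}. Both facts are standard in the theory of flat fronts in $\H^3$, so I expect no genuine difficulty here, only care in invoking the correspondence precisely.
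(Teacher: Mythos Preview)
Your proposal is correct and follows exactly the route the paper intends: the paper states this theorem simply as the case $m=1$ of Theorem \ref{thm:m-curvature est}, and you have spelled out the only nontrivial step the paper leaves implicit, namely that the Weierstrass $1$-triple $(\D,\,f\,dz,\,g)$ produced by Theorem \ref{thm:m-curvature est}(ii) can be realized as the pair $(\omega,\theta)=(f\,dz,\,g\,f\,dz)$ of canonical forms of a holomorphic Legendrian immersion $\Lc\colon\D\to SL(2,\C)$ by integrating the trace-free off-diagonal system on the simply connected disk. Your verification that $\omega$ and $\theta$ have no common zero (via \eqref{eq:regularity} with $m=1$), that $\det\Lc\equiv 1$, and that $ds_{\Lc}^2=ds^2$ is exactly what is needed, and the cited references \cite{GMM2000,KUY2003,KUY2004,KRSUY2005} supply the remaining facts about flat fronts.
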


Moreover, as applications of our argument in Section \ref{sbsec:3.1}, we obtain value distribution theoretic properties for the ratios of canonical forms of flat fronts in $\H^3$.  
For instance, we can show the following uniqueness theorem (see, for example, \cite{GMM2000, Sa, VV}) for complete flat surfaces in $\H^3$. 

\begin{theorem}
Any complete flat surface in $\H^3$ must be a horosphere or a hyperbolic cylinder. 
\end{theorem}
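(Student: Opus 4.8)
The plan is to run, in the flat-front setting, the same reduction used above for the Calabi--Bernstein theorem for maxfaces and for the parametric affine Bernstein theorem for improper affine spheres, funnelling everything into Corollary \ref{cor:Bernstein} with $m=1$.

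First I would pass to the universal cover. For a complete flat surface $\psi\colon\Sigma\to\H^3$, the universal cover $\widetilde{\Sigma}$ carries the lifted (complete, flat) metric and is non-compact, since $\H^3$ contains no closed surface with $K_{ds^2}\equiv0$: at a point of such a surface farthest from a fixed interior point both extrinsic principal curvatures would exceed $1$, so $K_{ds^2}=-1+k_1k_2>0$ by the Gauss equation. Hence $\widetilde{\Sigma}$ is an open Riemann surface, and since the lifted surface has the same image as $\psi$ it suffices to treat it; so I may assume $\Sigma$ is simply connected from the start. Take a holomorphic lift $\Lc$, the canonical forms $\omega,\theta$ with ${\Lc}^{-1}\,d\Lc$ off-diagonal, and the ratio $\rho=\theta/\omega$.

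Because $\psi$ is an immersion it has no singular points, so $|\rho|\neq1$ everywhere, and by connectedness $|\rho|<1$ on $\Sigma$ or $|\rho|>1$ on $\Sigma$. Interchanging $\omega$ and $\theta$ if necessary (this replaces $\rho$ by $1/\rho$ and leaves $ds^2_{\Lc}=|\omega|^2+|\theta|^2$ unchanged), I may assume $|\rho|<1$ on $\Sigma$. Then $(\Sigma,\omega,\rho)$ is a Weierstrass $1$-triple whose metric \eqref{eq:m-metric} is exactly $ds^2_{\Lc}=(1+|\rho|^2)\,|\omega|^2$, because $\Lc$ being an immersion forces $\omega$ and $\theta$ never to vanish simultaneously, so the zeros of $\omega$ are precisely the poles of $\rho$ and of the same order. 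The key analytic step is the completeness of $ds^2_{\Lc}$: the first fundamental form of a flat front is $ds^2=|\omega+\bar\theta|^2$, hence pointwise
\[
ds^2=|\omega+\bar\theta|^2\leq(|\omega|+|\theta|)^2\leq 2\,(|\omega|^2+|\theta|^2)=2\,ds^2_{\Lc},
\]
so every divergent curve of infinite $ds^2$-length also has infinite $ds^2_{\Lc}$-length; as $ds^2$ is complete by hypothesis, $ds^2_{\Lc}$ is complete. Corollary \ref{cor:Bernstein} now applies and gives that $\rho$ is constant on $\Sigma$.

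Finally I would identify the surface. When $\rho\equiv0$ (equivalently $\rho\equiv\infty$, after the interchange above) the generator of the one-parameter subgroup into which ${\Lc}^{-1}\,d\Lc$ integrates is nilpotent and $\psi(\Sigma)$ is a horosphere, while when $\rho$ is a nonzero constant, necessarily of modulus $\neq1$, the generator is diagonalizable and $\psi(\Sigma)$ is a hyperbolic cylinder; this is the classification of flat fronts with constant ratio of canonical forms, which I would simply quote (see, e.g., \cite{GMM2000}). I expect the only delicate points to be (i) fixing the normalization in the identity $ds^2=|\omega+\bar\theta|^2$ so that $ds^2\le 2\,ds^2_{\Lc}$ holds literally, and (ii) invoking the constant-$\rho$ classification in exactly the form ``horosphere or hyperbolic cylinder''; once Corollary \ref{cor:Bernstein} is in hand, the remainder is formal.
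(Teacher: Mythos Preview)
Your proposal is correct and follows essentially the same route as the paper: reduce to $|\rho|<1$, establish weak completeness of $ds^2_{\Lc}$, apply Corollary~\ref{cor:Bernstein} with $m=1$ to conclude that $\rho$ is constant, and then invoke the constant-$\rho$ classification. The only difference is that where the paper simply cites \cite[Corollary~3.4]{KUY2004} for the implication ``complete $\Rightarrow$ weakly complete'' and \cite[Proposition~4.4]{KN} for the final classification, you supply the first step directly via the pointwise inequality $ds^2=|\omega+\bar\theta|^2\le 2\,ds^2_{\Lc}$ (which is indeed the correct form of the induced metric in the conventions of \cite{KUY2004}) and cite \cite{GMM2000} for the second.
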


\begin{proof}
Since a flat surface has no singularities, the complement of the image of the ratio of canonical forms $\rho$ contains at least the circle $\{|\rho| =1\}\subset \RC$. 
Then, by exchanging roles of canonical forms if necessarily, we may assume that $|\rho|< 1$. 
Furthermore, \cite[Corollary 3.4]{KUY2004} states that a complete flat surface in $\H^3$ is also weakly complete. 
By Corollary \ref{cor:Bernstein}, we obtain that $\rho$ is constant. 
From \cite[Proposition 4.4]{KN}, it is a horosphere or a hyperbolic cylinder.    
\end{proof}

Furthermore, as an application of Corollary \ref{cor:Fujimoto}, we can obtain the Picard-type theorem (\cite[Theorem 4.5]{KN}) for this class. 

\begin{theorem}
If the ratio of canonical forms of a weakly complete flat front in $\H^3$ is nonconstant, it can omit at most $3 \, (=1+2)$ values. 
Furthermore, this result is optimal. 
\end{theorem}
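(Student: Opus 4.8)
The plan is to recognise the ratio of canonical forms as the meromorphic function of a Weierstrass $1$-triple and then to apply Corollary~\ref{cor:Fujimoto} with $m=1$. So, given a weakly complete flat front $\psi=\Lc\,{\Lc}^{\ast}\colon\Sigma\to\H^{3}$ with canonical forms $\omega,\theta$ and ratio $\rho=\theta/\omega$, I would first observe that since $ds^{2}_{\Lc}=|\omega|^{2}+|\theta|^{2}$ is positive definite, $\omega$ and $\theta$ have no common zero; hence at a zero of $\omega$ of order $k$ one has $\theta\neq0$, so $\rho$ has a pole of order exactly $k$, and conversely every pole of $\rho$ arises this way. Therefore $(\omega)_{0}=(\rho)_{\infty}$, which is precisely the regularity condition \eqref{eq:regularity} with $m=1$ (and $\Sigma$ is open, for a simply connected compact Riemann surface is $\RC$, which admits no nonzero holomorphic $1$-form). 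Thus $(\Sigma,\omega,\rho)$ is a Weierstrass $1$-triple, and by \eqref{eq:flat-Sasaki} its metric $(1+|\rho|^{2})\,|\omega|^{2}$ coincides with $ds^{2}_{\Lc}$, hence is complete by weak completeness. If $\rho$ is nonconstant, Corollary~\ref{cor:Fujimoto} with $m=1$ now yields that $\rho$ omits at most $m+2=3$ distinct values in $\RC$.

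For the optimality statement I would run the construction in the proof of Corollary~\ref{cor:Fujimoto} backwards. Fix distinct $\alpha_{1},\alpha_{2}\in\C$, let $\Sigma$ be the universal covering of $\C\setminus\{\alpha_{1},\alpha_{2}\}$, and pull back to $\Sigma$ the holomorphic $1$-forms
\[
\omega := \dfrac{dz}{(z-\alpha_{1})(z-\alpha_{2})}, \qquad \theta := z\,\omega,
\]
so that $\rho=\theta/\omega=z$ omits the three values $\alpha_{1},\alpha_{2},\infty$. On the simply connected surface $\Sigma$ the off-diagonal connection form with entries $\theta$ and $\omega$ is automatically integrable, since $d\omega=d\theta=0$ and the wedge of two holomorphic $1$-forms on a Riemann surface vanishes; integrating it gives a holomorphic Legendrian map $\Lc\colon\Sigma\to SL(2,\C)$, which is an immersion because $|\omega|^{2}+|\theta|^{2}>0$, hence a flat front $\psi=\Lc\,{\Lc}^{\ast}$ whose ratio of canonical forms is $\rho$. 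It is weakly complete: the metric $ds^{2}_{\Lc}=(1+|z|^{2})\,|\omega|^{2}$ descends to $\C\setminus\{\alpha_{1},\alpha_{2}\}$, and each of the three ends over $\alpha_{1},\alpha_{2},\infty$ has infinite length since
\[
\int_{\gamma} ds_{\Lc} = \int_{\gamma} \frac{(1+|z|^{2})^{1/2}}{|z-\alpha_{1}|\,|z-\alpha_{2}|}\,|dz| = +\infty
\]
along any curve $\gamma$ tending to it, and completeness then lifts to the universal cover. Thus $\rho$ is nonconstant, weakly complete, and omits exactly $3$ values, so the bound is sharp.

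The only step that is not a mechanical translation is this optimality example, where one must verify that the abstract Weierstrass data integrates to an honest flat front and that the resulting metric is complete on the (non-trivial) universal cover; but the integrability is free, because a $(2,0)$-form on a Riemann surface vanishes identically, so I do not expect a genuine obstacle here. Everything else reduces to the dictionary between flat fronts and Weierstrass $1$-triples together with Corollary~\ref{cor:Fujimoto}.
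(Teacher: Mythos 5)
Your proposal is correct and follows exactly the route the paper intends: identify $(\Sigma,\,\omega,\,\rho)$ as a Weierstrass $1$-triple whose metric $(1+|\rho|^2)\,|\omega|^2$ is the complete metric $ds^2_{\Lc}$, and invoke Corollary~\ref{cor:Fujimoto} with $m=1$; the paper itself states the theorem as an application of that corollary and defers the details, including the sharpness example, to \cite[Theorem 4.5]{KN}. Your optimality construction is the $m=1$ example from the proof of Corollary~\ref{cor:Fujimoto} correctly promoted to a flat front, and the integrability and completeness checks you supply are exactly the ones needed.
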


\bigskip


\end{document}